\documentclass[a4paper,11pt,reqno]{amsart}
\usepackage[a4paper, left=1in, right=1in, top=0.75in, bottom=0.7in]{geometry}
\usepackage{amsfonts}
\usepackage{dsfont}
\usepackage{amsmath}
\usepackage{array}
\usepackage{amssymb}
\usepackage{mathrsfs}
\usepackage{array}
\usepackage{comment}
\usepackage{csquotes}
\usepackage{enumitem}
\usepackage{mdframed}
\usepackage{xurl}
\usepackage{todonotes}

\newmdtheoremenv{lem}{Lemma}
\newmdtheoremenv{cor}{Corollary}
\newmdtheoremenv{rem}{Remark}
\usepackage{color}
\usepackage{multirow}
\allowdisplaybreaks

\usepackage[dvipsnames]{xcolor}
\usepackage[colorlinks = true,
            linkcolor = blue,
            urlcolor  = gray,
            citecolor = blue,
            anchorcolor = blue]{hyperref}
\renewcommand\eqref[1]{(\ref{#1})} 

\numberwithin{equation}{section}
\theoremstyle{plain}
\newtheorem{theorem}{Theorem}[section]
\newtheorem{proposition}[theorem]{Proposition}
\newtheorem{corollary}[theorem]{Corollary}
\newtheorem{lemma}[theorem]{Lemma}
\theoremstyle{definition}

\DeclareMathOperator{\supp}{supp}

\usepackage{xurl} 
\usepackage{ragged2e}

\begin{document}

   \title[Traveling waves for nonlocal DNLS]
   {Traveling Waves for Nonlocal Derivative Nonlinear Schr\"odinger Equations: A Variational Characterization}

\author[A. Esfahani]{Amin Esfahani}
\address{
  Amin Esfahani:
  \endgraf
  Department of Mathematics
  \endgraf
Nazarbayev University, Astana, Kazakhstan
  \endgraf
  {\it E-mail address:} {\rm  amin.esfahani@nu.edu.kz}
  }

\author[A. Kairzhan]{Adilbek Kairzhan}
\address{
  Adilbek Kairzhan:
  \endgraf
  Department of Mathematics
  \endgraf
Nazarbayev University, Astana, Kazakhstan
  \endgraf
  {\it E-mail address:} {\rm  akairzhan@nu.edu.kz}
  }

\author[M. Karazym]{Mukhtar Karazym}
\address{
  Mukhtar Karazym:
  \endgraf
  Private Institution Nazarbayev University Research Administration, Astana, Kazakhstan
  \endgraf
\&
  \endgraf
School of  Artificial Intelligence and Data Science
\endgraf
Astana IT University, Astana, Kazakhstan
\endgraf
  {\it E-mail address:} {\rm  mukhtar.karazym@nu.edu.kz}
  }

\keywords{}
\subjclass[2020]{}

\begin{abstract} 
We establish several existence results for traveling-wave solutions of the nonlocal derivative nonlinear Schr\"odinger equation with general coefficients by
variational methods. We study associated minimization problems in the subcritical and critical cases and prove the existence of a minimizer in each case. Finally, we derive Pohozaev-type identities and use them to establish corresponding nonexistence results.
\end{abstract}
\maketitle
\setcounter{tocdepth}{3}
\setcounter{secnumdepth}{3}

\section{Introduction}

\subsection{The model} 
We consider the nonlocal derivative nonlinear Schr\"odinger (nonlocal DNLS) equation given by
\begin{equation}
\label{nonlocal DNLS}
i u_t - u_{xx} - b |u|^2 u 
+ i \alpha |u|^2 u_x 
+ i \beta u^2 \bar{u}_x 
+ \gamma u  \partial_x \mathcal{H}(|u|^2) = 0,
\end{equation}
where $ b, \alpha, \beta, \gamma \in \mathbb{R}$ are parameters, and $ \mathcal{H} $ denotes the Hilbert transform defined by
\begin{equation*}
\mathcal{H}u(x,t) = \frac{1}{\pi}   \mathrm{p.v.}  \int_{\mathbb{R}} 
\frac{u(y,t)}{x - y}   dy,
\end{equation*}
see also \eqref{hilbert-fourier-defn} for its definition in the Fourier convention. 

The equation \eqref{nonlocal DNLS} arises while investigating water waves in the modulational regime when the surface waves of the water are approximated by slow-modulations of near-monochromatic waves. This was shown by Lo and Mei in \cite{LM1985}, where they derived \eqref{nonlocal DNLS} from a more general model, known as the Dysthe equation, after an appropriate change of variables. Later, the equation was studied in more detail by Fedele and Dutykh in \cite{fedele2012}. It was shown that for generic values of the parameters, the solution to \eqref{nonlocal DNLS} conserves the mass and momentum functional 
$$\mathcal{M}(u)=\frac{1}{2}\int_{\mathbb{R}}|u(x,t)|^2dx, \qquad 
\mathcal{P} (u) = \int _{\mathbb{R}}\left( \operatorname{Im} (u_x \overline u) - \frac{1}{2} \beta |u|^4 \right)dx
$$
as well as the action functional 
$$
\mathcal{E} (u) = \int_{\mathbb{R}}\left( |u_x|^2 -\frac{b}{2} |u|^4 + \frac{\beta (\alpha+\beta)}{6} |u|^6 - \frac{\alpha+\beta}{2} |u|^2 \operatorname{Im} (u_x \overline u) + \frac{\gamma}{2} |u|^2 \partial_x\mathcal{H}  (|u|^2) \right)dx.
$$
It is worth noting that the equation \eqref{nonlocal DNLS} is not Hamiltonian in general since the original Dysthe equation, obtained in \cite{D79} and used as a starting point in \cite{LM1985}, is not Hamiltonian. However, in a special scenario when $\beta = 0$, there is a Hamiltonian formulation for \eqref{nonlocal DNLS}, which can be derived from a Hamiltonian version of Dysthe equation obtained previously in \cite{CGS21, GT11, GKS22}. Indeed, when the parameter $\beta$ vanishes, the equation \eqref{nonlocal DNLS} has a following canonical form
\begin{equation*}
\begin{aligned}
\partial_t \begin{pmatrix}
u \\ \overline u
\end{pmatrix} = \begin{pmatrix}
0 & -i \\ i & 0
\end{pmatrix} \begin{pmatrix}
\delta_u E \\ \delta_{\overline u} E
\end{pmatrix}
\end{aligned}
\end{equation*}
with a Hamiltonian given by $E := - \mathcal{E}(u; \beta = 0)$. 

For a particular choice of parameters, the equation \eqref{nonlocal DNLS} can be reduced to the cubic NLS and the derivative NLS equations, both of which have been a subject of extensive research in the past several decades. 
PDEs of recent interest in the literature are recovered when the coefficient $\gamma$ of the nonlocal term is nonzero. For example, in case $\alpha = \beta = -\gamma$, the equation \eqref{nonlocal DNLS} is equivalent to 
\begin{equation}
\label{de Moura}
iu_t - u_{xx} = b |u|^2 u 
- i\beta u (1 + i\mathcal{H}) (|u|^2)_x = 0.
\end{equation}
This is the limiting case of the intermediate NLS equation 
\begin{equation}
\label{inls}
iu_t - u_{xx} = b|u|^2 u- i\beta u (1 + i\mathcal{T}_h) (|u|^2)_x = 0,   
\end{equation}
where $\mathcal{T}_h$ acts as
$$
\mathcal{T}_h f(x):=\frac{1}{2 h} \text { p.v. } \int_{\mathbb{R}} \operatorname{coth}\left(\frac{\pi(x-y)}{2 h}\right) f_n(y) d y, \qquad 0<h<\infty. 
$$
The equation \eqref{inls} was derived by Pelinovsky and Grimshaw in a series of papers \cite{pelinovsky1995, pelinovsky1995g, Pelinovsky1996} as a model to describe the evolution of nearly monochromatic (or quasi-harmonic) internal waves in a stratified fluid of finite depth. The limit $h \to \infty$ formally leads to a deep water case, and the equation \eqref{inls} becomes \eqref{de Moura}. 

The well-posedness of the Cauchy problem for \eqref{de Moura}--\eqref{inls} has been studied by de Moura in \cite{deMoura2007}, where the local well-posedness was established in $H^s(\mathbb{R};\mathbb{C})$ with $s\geq 1$ for sufficiently small initial data. Later, the result was extended to $H^s(\mathbb{R};\mathbb{C})$ with $s>\frac{1}{2}$ by de Moura and Pilod in \cite{deMoura2010} for arbitrarily large initial data. Very recently, this was further improved by Chapouto, Forlano and Laurens in \cite{chapouto2025}, where the local well-posedness was achieved for $H^s(\mathbb{R};\mathbb{C})$ with $s>\frac{1}{4}$. In \cite{chapouto2025}, the authors also proved a global well-posedness in $H^s(\mathbb{R};\mathbb{C})$ for any $s>\frac{1}{4}$ under additional smallness assumptions on the $L^2$-norm size of the solution. 
We refer to the introduction of \cite{chapouto2025} for a more thorough discussion about the development of the well-posedness theory for \eqref{de Moura}, \eqref{inls}, and related models. In contrast, the existence of traveling wave solutions has received less attention and is the main focus of the present paper.

\subsection{Main result and motivation}
The goal of the paper is to investigate the existence of traveling wave solutions to \eqref{nonlocal DNLS}. 
Such solutions are known to exist for many dispersive PDEs and propagate at constant speed while preserving their profile. They often capture fundamental mechanisms underlying the dynamics of the model, and play an important role in understanding the long-time dynamics of solutions. 
In many dispersive systems, it is observed that arbitrary finite-energy solutions asymptotically (in time)  decompose into a superposition of localized traveling (solitary) waves and a dispersive radiation term, a scenario commonly referred to as \textit{the soliton resolution conjecture}. Within this framework, solitary waves represent the nonlinear core of the long-time dynamics, while the remaining component disperses over time.  

We are interested in the equation \eqref{nonlocal DNLS} in the presence of a nonlocal nonlinear term. In contrast to the classical local NLS, nonlocal nonlinearities introduce additional analytical challenges, including the lack of pointwise structure and a more delicate variational analysis.

In this paper, we rigorously prove the existence of traveling wave solutions to \eqref{nonlocal DNLS} for a wide range of parameters $(b, \alpha, \beta, \gamma)$. Previously, the existence of such solutions was numerically predicted by Fedele and Dutykh in \cite{fedele2012} for a particular choice of parameters $(b, \alpha, \beta, \gamma)$. Another nonlocal PDE model known as the Calogero-Moser DNLS, which is relevant to our results, was studied by G\'erard and Lenzmann in \cite{gerard2024}. More details on this and other related PDEs are provided in Subsection \ref{subsection-previous-results}. We point out that our results recover certain known cases and extend the existing theory to additional parameter regimes.

We briefly outline the form of the traveling waves we are interested in, and we refer to Section \ref{section 2} for more details. We first apply the gauge
transformation
\begin{equation}
	\label{gauge-transform-intro}
	\phi(x,t)
	=
	u(x,t)\exp\left(
	-\frac{i(\alpha+\beta)}{4}
	\int_{x_0}^{x}|u(y,t)|^2\,dy
	\right),
\end{equation}
where
$$
x_0=
\begin{cases}
	0, & \text{in the critical case},\\
	-\infty, & \text{in the subcritical case}.
\end{cases}
$$
In the subcritical case, the integral from $-\infty$ is well defined, whereas in the critical case we use the
finite lower limit $x_0=0$ because $L^2$-integrability is not available.

We then use the traveling-wave ansatz
\begin{equation*}
\phi(x,t) = \exp \left(i\omega t - i\frac{c}{2}(x-ct) \right) \psi (x-ct), 
\end{equation*}
and seek profiles $\psi$ satisfying
\begin{equation}
\label{ode-for-psi-intro}
-\psi^{\prime \prime}
- \left(\omega + \frac{c^2}{4}\right) \psi
+  A |\psi|^2 \psi
+ B |\psi|^4 \psi
+ G \psi   \operatorname{Im}(\psi^{\prime} \bar{\psi})
+ \gamma \psi   (\mathcal{H}(|\psi|^2))^{\prime}
= 0,
\end{equation}
where
\begin{equation}\label{A, B, G}
 A := -b + \frac{c(\alpha - \beta)}{2}, \quad B := \frac{(\alpha + \beta)(-3\alpha + 5\beta)}{16},\quad G := \beta - \alpha.
\end{equation}
We show the existence and nonexistence results for solutions to \eqref{ode-for-psi-intro} for a range of parameters $\left(\omega + c^2/4,  A, B, \gamma\right)$. Table \ref{table} summarizes these results.

We note that, after eliminating the induced mean-flow potential from
equation (2.6) of Lo and Mei \cite{LM1985} in the infinite-depth setting,
the resulting equation corresponds in our notation to
$$
b=1,\qquad
\alpha=8\varepsilon,\qquad
\beta=0,\qquad
\gamma=2\varepsilon.
$$
In this case, $B<0$ and $\gamma>0$, and therefore
Theorem \ref{subcritical: B<0 and gamma>=0} applies. The critical case
$$
\omega+\frac{c^2}{4}=0
$$
is also covered by Theorem \ref{main thm critical} whenever $A>0$. This
corresponds to 
$c>1/(4\varepsilon)$ for \cite{LM1985}.

We also note that,  if the  Lagrange multiplier in 
Theorem \ref{thm: fixed quartic term} is positive, it yields a particular case of 
Theorem \ref{main thm critical}. We nevertheless include this formulation 
separately in Section \ref{section 6}, because it provides an alternative variational characterization of 
the corresponding traveling waves.

As a result, once the existence of an $H^1$ or
$\dot H^1\cap L^4$ solution of \eqref{ode-for-psi-intro} is established,
the corresponding traveling-wave solution of \eqref{nonlocal DNLS} is
given, in the subcritical case, by
\begin{equation}
\label{travel-wave-form}
\begin{aligned}
u(x,t)
=
\psi(x-ct)\exp\bigg(
i\omega t-\frac{ic}{2}(x-ct)+\frac{i(\alpha+\beta)}{4}
\int_{-\infty}^{x-ct}|\psi(y)|^2dy
\bigg),
\end{aligned}
\end{equation}
whereas in the critical case it is given by
\begin{equation}
\begin{aligned}
u(x,t)
=
\psi(x-ct)\exp\bigg(
i\omega t-\frac{ic}{2}(x-ct)
+\frac{i(\alpha+\beta)}{4}
\int_{-ct}^{x-ct}|\psi(y)|^2dy
\bigg).
\end{aligned}
\end{equation}
Here $\psi$ is referred to as the ground state and arises from the
variational minimization problems considered in this paper. 

\bigskip
\begin{table}[ht]
\centering
{
\renewcommand{\arraystretch}{1.5}
\begin{tabular}{|m{2cm}|>{\raggedright\arraybackslash}m{2.8cm}|m{3.2cm}|m{2.6cm}|m{2.3cm}|}
\hline
$-\left(\omega+c^2/4\right)$ & $ A$ & $B$ &$\gamma$ &main result\\ \hline
\multirow{2}{=}{positive (subcritical case)} & any & nonpositive & nonnegative & Theorem \ref{subcritical: B<0 and gamma>=0},  existence\\ 
\cline{2-5} 
 & derived from the Lagrange multipler; nonpositive, depends on $\omega, c$ and $q$ & zero & derived from the Lagrange multipler; nonpositive, depends on $\omega,  c$ and $q$& Theorem \ref{thm: existence},  existence \\ 
\hline
\multirow{2}{=}{zero (critical case)} & positive & \multirow{2}{*}{negative} & \multirow{2}{*}{nonnegative} & Theorem \ref{main thm critical},  existence \\ 
\cline{2-2} \cline{5-5}
 & derived from the Lagrange multiplier; depends on $B,  \gamma$ and $q$; sign not known& & & Theorem \ref{thm: fixed quartic term},  existence \\
\hline
any & nonnegative & nonnegative & positive & \multirow{2}{=}{Theorem \ref{thm: nonexist.}, nonexistence} \\
\cline{1-4}
nonpositive & nonpositive & nonpositive & any & \\
\hline
zero (critical case)& zero& $B>\gamma^2/4$ &negative& Theorem \ref{thm:critical-nonexistence-negative-gamma}, nonexistence \\
\hline
\end{tabular}
}
\vspace{0.5cm}
\caption{Throughout the table $q$ denotes a constraint level given in Sections \ref{section 5} and \ref{section 6}. The coefficients $A$ and $B$ are given in \eqref{A, B, G}.}
\label{table}
\end{table}

\subsection{Related results}
\label{subsection-previous-results}

Here we recall several relevant existence results for traveling wave solutions that are available in the literature, although some of the corresponding cases fall outside the scope of the present work.

\textbf{\textit{Consider the case $b=0$, $\gamma=-1$ and $\alpha=\beta=1$ in the equation \eqref{nonlocal DNLS}.}} Then, defining $v(x, t):=u(x, -t)$, we can rewrite \eqref{nonlocal DNLS} as 
\begin{equation}
\label{NDNLS-}
i v_t+v_{xx} - i v (1+i\mathcal{H})\left(|v|^2\right)_x=0,
\end{equation}
which is known as the Calogero–Moser DNLS equation. This equation has a Lax pair structure and is completely integrable, and therefore, it is feasible to expect that its solitary wave solutions can be explicitly found. Indeed, the equation was studied recently by G\'erard and Lenzmann in \cite{gerard2024}, and it was shown that \eqref{NDNLS-} admits a family of traveling wave solutions given by 
\begin{equation}
\label{travel-waves-gl24}
v(x,t)=e^{i \theta+i \eta x-i \eta^2 t} \lambda^{1 / 2} \mathcal{R}(\lambda(x-2 \eta t)+y), 
\end{equation}
where 
$$\mathcal{R}(x)=\frac{\sqrt{2}}{x+i}$$
with some $\theta \in[0,2 \pi)$, $y \in \mathbb{R}$, $\lambda>0$ and $\eta \in \mathbb{R}$. Although this parameter choice is not covered by the existence theorems
of the present paper, the explicit solutions in
\eqref{travel-waves-gl24} are compatible with the traveling-wave
representation \eqref{travel-wave-form}.  Indeed, taking $\lambda=1$ and
$y=0$ in \eqref{travel-waves-gl24}, and recalling that
$v(x,t)=u(x,-t)$, we obtain
\begin{equation*}
\label{CM-nonlocalNLS-waves-rel}
v(x, t) = \psi(x + c t)  \exp \left( 
-i \omega t - i \frac{c}{2}(x + c t) + \frac{i}{2} 
\int_{-\infty}^{x + c t} |\psi(y)|^2   dy 
\right)
\end{equation*}
with $\theta=\pi\pmod{2\pi}$, $\omega =-\eta^2$, $c =-2\eta$ and
\begin{equation}
\label{CM DNLS sol.}
\psi(x) =\frac{\sqrt{2}}{\sqrt{x^2+1}} \in  H^1(\mathbb{R};\mathbb{C}).
\end{equation}
Note that \eqref{CM DNLS sol.} satisfies
\begin{equation}\label{eq:GM-stationary}
-\psi''
+\frac{1}{4}|\psi|^4\psi
-\psi\big(\mathcal{H}(|\psi|^2)\big)'
=0.
\end{equation}
More generally, if
$$
\gamma<0,
\qquad
B=\frac{\gamma^2}{4},
$$
then the change of amplitude
$$
\psi=\sqrt{-\gamma}\,\widetilde\psi
$$
reduces
\begin{equation}\label{eq:GM-stationary-rescaled}
-\widetilde\psi''
+B|\widetilde\psi|^4\widetilde\psi
+\gamma\widetilde\psi
\big(\mathcal{H}(|\widetilde\psi|^2)\big)'
=0
\end{equation}
to \eqref{eq:GM-stationary}. Hence, the solution of \eqref{eq:GM-stationary-rescaled} is
$$
\widetilde\psi(x)
=
\sqrt{\frac{2a}{-\gamma}}\,
\frac{1}{\sqrt{a^2+(x-x_0)^2}},
\qquad
a>0,\quad x_0\in\mathbb{R}.
$$
We complement this result by proving the nonexistence of nontrivial solutions in the regime
$$
B>\frac{\gamma^2}{4},\quad \gamma<0,
$$
see Theorem \ref{thm:critical-nonexistence-negative-gamma}.

\textbf{\textit{Consider the special case of our results, where $\gamma = b = 0$, and either $(\alpha,\beta)=(-1,0)$ or $(\alpha,\beta)=(-2,-1)$ in the equation \eqref{nonlocal DNLS}.}} Then, after the gauge transformation \eqref{gauge-transform-intro} with $x_0=-\infty$ and defining $v(x, t):=\phi(x,-t)$, the equation \eqref{nonlocal DNLS} becomes
\begin{equation*}\label{DNLS}
i v_t+ v_{xx}+\frac{1}{2} i|v|^2 v_x-\frac{1}{2} i v^2 \bar{v}_x+\frac{3}{16}|v|^4 v=0.
\end{equation*}
This equation admits a two-parameter family of solitary wave solutions, which are unique in $H^1(\mathbb{R};\mathbb{C})$ up to the phase rotation and spatial translation  \cite{Miao2018}, and are given by 
$$v(x,t)=\exp{\left(i\omega t + i \frac{c}{2}(x-ct) \right)} \phi_{\omega, c}(x-ct),$$ 
where
\begin{equation*}
\label{DNLS sol.}
\renewcommand{\arraystretch}{2}
\phi_{\omega, c}(x):=
\left\{
\begin{array}{ll}
\displaystyle
\left(\frac{\sqrt{\omega}}{4 \omega-c^2}
\left(\cosh\left(\sqrt{4 \omega-c^2}  x\right)
-\frac{c}{2 \sqrt{\omega}}\right)\right)^{-1 / 2},
& 4 \omega>c^2, \\
\displaystyle
2 \sqrt{c} (c^2 x^2+1)^{-1 / 2},
& 4 \omega=c^2,\; c>0.
\end{array}
\right.
\end{equation*}

Some of our results build on ideas from previous works, for instance, \cite{Colin2006} and \cite{Miao2018}, but the
present study uses a different gauge and a distinct variational framework,
arising naturally from the form of our equation.  In
particular, when $\gamma=0$, Theorems \ref{subcritical: B<0 and gamma>=0}
and \ref{main thm critical} yield traveling-wave solutions to
$$
iu_t-u_{xx}-b|u|^2u
+i\alpha|u|^2u_x
+i\beta u^2\overline{u}_x=0
$$
of the form
$$
u(x,t)=e^{i\omega t}\phi(x-ct)
$$
in the parameter regimes covered by these theorems. Explicit traveling-wave
solutions of this local equation for general values of $\alpha$ and $\beta$
were previously obtained in \cite{Kondo1997}.

There exist many additional results in the literature establishing the existence of special classes of solutions to PDEs related to our models \eqref{nonlocal DNLS} or \eqref{ode-for-psi-intro}. However, the type of solutions constructed in those works differs substantially from the traveling wave solutions considered in this present paper. For this reason, although these results are closely related in spirit, we only briefly mention a few of such works and refer the reader to the references therein for a more comprehensive overview. 
In particular, we point out a recent work by Chen and Pelinovsky in \cite{Chen2025}, where the existence of traveling periodic waves and breathers was studied for the nonlocal derivative NLS equation. Among results for the \textit{local} models, Lu et al. in \cite{lu2016} obtained bright and dark soliton solutions to the equation in case $\alpha = 2\beta$ and $\gamma = 0$ in \eqref{nonlocal DNLS}. The equations similar to \eqref{ode-for-psi-intro} were studied by in \cite{ming2008, yue2016} where the existence of algebraic, bell-type, kink-type and sinusoidal traveling wave solutions was established.

\subsection{Structure of the paper}

In Section \ref{section 2}, we apply a gauge transformation to simplify the derivative cubic terms in \eqref{nonlocal DNLS}, so this nonlinearity can be reduced to a single derivative term with a simpler structure. We then impose a traveling-wave ansatz and factor out the spatial phase $e^{-i c x / 2}$, yielding an ODE with a nonlocal term. Following \cite{Colin2006}, we further simplify this ODE so that it no longer depends on the complex conjugate.

In Sections \ref{section 3} and \ref{section 4}, we consider subcritical and critical minimization problems involving the Nehari functional. In both cases, the action functional is unbounded from below; therefore, we replace it with a positive functional and enlarge the constraint set. We then prove the equivalence of the resulting minimization problems. A similar approach is used in the context of a two-parameter family of solitary wave solutions for the derivative NLS; see, for example,  \cite{Fukaya2017} and \cite{Miao2018}.

Since the equation \eqref{nonlocal DNLS} contains many parameters, several different cases arise. In Section \ref{section 5}, we study a minimization problem for a coercive functional under constraints involving the nonlocal and local quartic term. The main tool is Lions’ concentration-compactness principle, which yields the existence of a minimizer. Related applications of this method can be found in \cite{Borluk2026, Chen2011,  Duran2024, Kabakouala2018, Levandosky1999}.

In Section \ref{section 6}, the action functional takes negative values, and the quartic term appears as a constraint. The existence of a minimizer is also derived from Lions’ concentration-compactness principle. Similar phenomena have been
observed in other contexts, see e.g. \cite{Albert1999, Pava1999, kichenassamy, Nguyen2011}.

In Section \ref{section 7}, we establish the nonexistence of smooth traveling wave solutions with weighted decay conditions by using Pohozaev-type identities.

We point out that the present paper focuses on the existence and nonexistence results of traveling
wave solutions. 
For the ground states obtained in our existence
theorems with $\gamma\neq0$, uniqueness (up to the phase rotation and spatial translation symmetries) remains open; see e.g.
\cite{laire2022}.  In fact, proving uniqueness for nonlocal equations can be quite challenging; see, e.g., \cite{Amick1991} and \cite{Frank2013}.

The stability or instability of the constructed traveling waves is not
addressed in the present work and remains to be studied. In the
Calogero--Moser DNLS case \eqref{NDNLS-}, which is not covered by our existence
theorems, Hogan and Kowalski \cite{Hogan2024} proved a restricted stability result modulo
phase, translation, and scaling, and constructed perturbations of the soliton
whose Sobolev norms become unbounded for every $s>0$.

\subsection{Notation and preliminaries}

The expression $a \lesssim d$ means $a \leq C d$ for some positive independent constant $C$.
We denote by $$\langle f, g\rangle:=\int_{\mathbb{R}} f \bar{g}dx$$ the $L^2(\mathbb{R};\mathbb{C})$-inner product of  $f$ and $g$. 

Using the standard notation for the spatial Fourier transform
$$
\widehat{f}(\xi):=\int_{\mathbb{R}} e^{-2\pi i x  \xi} f(x)dx,
$$
we denote by
$$
|f|_{\dot{H}^s}:=\left(\int_{\mathbb{R}^n}|\xi|^{2 s}|\hat{f}(\xi)|^2 d \xi\right)^{1 / 2}
$$
the   seminorm of the homogeneous Sobolev space $\dot{H}^s(\mathbb{R};\mathbb{C})$ for some $s\in \mathbb{R}$.
We denote by
$$
\|f\|_{H^s}:=\left(\int_{\mathbb{R}}\langle\xi\rangle^{2 s}|\hat{f}(\xi)|^2 d \xi\right)^{1 / 2}
$$
the   norm of the inhomogeneous Sobolev space $H^s(\mathbb{R};\mathbb{C})$ for some $s\in \mathbb{R}$, where $$\langle\xi\rangle:=\left(1+|\xi|^2\right)^{1 / 2}.$$

Since this paper addresses both subcritical and critical cases, it is
necessary to introduce the function spaces used for the associated
minimization problems. We begin with the subcritical case.
Let $\omega,c\in\mathbb{R}$ satisfy $4\omega<-c^2$. We equip
$H^1(\mathbb{R};\mathbb{C})$ with the equivalent norm
$$
\|\varphi\|_{\widetilde H_c}^2
:=
\|\psi'\|_{L^2}^2
-\left(\omega+\frac{c^2}{4}\right)\|\psi\|_{L^2}^2,
\qquad
\psi(x):=e^{icx/2}\varphi(x),
$$
and denote the resulting Hilbert space by $\widetilde H_c$. We see that the map
$$
\varphi \mapsto e^{icx/2}\varphi
$$
is a Hilbert space isomorphism from $\widetilde H_c$ onto
$H^1(\mathbb{R};\mathbb{C})$.

In the critical case, the associated minimization problem must be considered in a different space. 
To this end, we define a Banach space
$$
X_c := \left\{ \varphi \in \mathcal{S}'(\mathbb{R};\mathbb{C}) : \varphi(x) = \exp\left(-\frac{icx}{2}\right)\psi(x) 
\ \text{with } \psi \in \dot{H}^1(\mathbb{R}; \mathbb{C})\cap L^4(\mathbb{R}; \mathbb{C}) \right\},
$$
equipped with the norm
$$
\|\varphi\|_{X_c} := |\psi|_{\dot{H}^1} + \|\psi\|_{L^4}.
$$
This space will be used to treat the critical cases in Sections \ref{section 4} and \ref{section 6}.

Let us recall some properties of the Hilbert transform.  We can express $\mathcal{H}$ as a Fourier multiplier operator given by
\begin{equation}
\label{hilbert-fourier-defn}
\widehat{\mathcal{H}(f)}(\xi)=-i \operatorname{sgn}(\xi) \hat{f}(\xi).
\end{equation}
Moreover, it is known that
$$
\partial_x \mathcal{H}=\mathcal{H} \partial_x=|D|
$$
and
$$
\int_{\mathbb{R}} u  \partial_x\mathcal{H} u dx=\int\left||D|^{1/2} u\right|^2dx=|u|_{\dot{H}^{1/2}}^2.
$$
The operator $|D|^{1/2}$ can be written as
\begin{equation*}
|D|^{1/2}u(x)
=c \mathrm{p.v.}\int_{\mathbb{R}}
\frac{u(x)-u(y)}{|x-y|^{3/2}} dy.
\end{equation*}

Lastly, we present the concentration compactness lemma by P.L. Lions \cite{Lions1984}.

\begin{lemma}\label{lem:CC}
Let $\{\rho_n\}$ be a sequence of nonnegative functions in $L^1(\mathbb{R};\mathbb{R})$ such that
$$
\int_{\mathbb{R}} \rho_n(x) dx = L > 0
\qquad \text{for all } n\in\mathbb{N}.
$$
Then, up to a subsequence, exactly one of the following alternatives occurs:

\begin{enumerate}
\item[\textnormal{(i)}] \textbf{Compactness.}  
There exists a sequence $\{y_n\}\subset \mathbb{R}$ such that for every $\varepsilon>0$ there exists $R=R(\varepsilon)>0$ with
$$
\int_{|x-y_n|\leq R} \rho_n(x) dx \geq L-\varepsilon
\qquad \text{for all } n.
$$

\item[\textnormal{(ii)}] \textbf{Vanishing.}  
For every $R>0$,
\begin{equation*}
\lim_{n\to\infty}\sup_{y\in\mathbb{R}}
\int_{|x-y|\leq R}\rho_n(x) dx = 0.
\end{equation*}

\item[\textnormal{(iii)}] \textbf{Dichotomy.}  
There exists $\ell\in(0,L)$ such that for every $\varepsilon>0$ there exist $R=R(\varepsilon)>0$, a sequence $R_n\to\infty$, and a sequence $\{y_n\}\subset\mathbb{R}$ such that
$$
\left|\int_{|x-y_n|\leq R}\rho_n(x) dx-\ell\right|<\varepsilon
$$
and
$$
\int_{R<|x-y_n|<R_n}\rho_n(x) dx<\varepsilon
$$
for all sufficiently large $n$.
\end{enumerate}
\end{lemma}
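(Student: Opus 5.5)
The statement to be proved is Lions' concentration-compactness lemma (Lemma \ref{lem:CC}). I would argue through the Lévy concentration functions
$$
Q_n(R):=\sup_{y\in\mathbb{R}}\int_{|x-y|\le R}\rho_n(x)\,dx .
$$
Each $Q_n$ is nondecreasing in $R$, takes values in $[0,L]$, and satisfies $Q_n(R)\to L$ as $R\to\infty$.

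\textbf{Step 1: a limiting concentration function.} By Helly's selection theorem, a diagonal subsequence gives $Q_n(R)\to Q(R)$ for every $R>0$, where $Q$ is nondecreasing with values in $[0,L]$. To get this, I first extract convergence on the rationals and then use monotonicity, passing to the right-continuous version of $Q$ if needed. Set
$$
\ell:=\lim_{R\to\infty}Q(R)\in[0,L].
$$
The three alternatives of the lemma correspond to the cases $\ell=0$, $\ell=L$ and $0<\ell<L$.

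\textbf{Step 2: the two extreme cases.}
\begin{itemize}
\item If $\ell=0$, then $Q(R)=0$ for every $R$. Hence $Q_n(R)\to0$ for every $R$, which is exactly vanishing.
\item If $\ell=L$, fix $\varepsilon<L/2$ and choose $R_0$ with $Q(R_0)>L-\varepsilon/2$. Then for large $n$ there is $y_n(\varepsilon)$ with mass at least $L-\varepsilon$ in $B(y_n(\varepsilon),R_0)$.
\end{itemize}
In the second case the centers must be made independent of $\varepsilon$. I would fix $y_n:=y_n(L/4)$ from $\varepsilon=L/4$. Each ball $B(y_n(\varepsilon),R_\varepsilon)$ carries more than half the mass, and so does $B(y_n,R_{L/4})$, so the two balls intersect. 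Consequently $B(y_n,\,R_\varepsilon+2R_{L/4})$ contains $B(y_n(\varepsilon),R_\varepsilon)$. Enlarging $R$ further covers the finitely many early indices $n$, since each single $\rho_n$ is integrable. This gives compactness.

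\textbf{Step 3: the dichotomy case $0<\ell<L$.} Given $\varepsilon$, choose $R$ with $Q(R)>\ell-\varepsilon$. For large $n$, pick $y_n$ with
$$
\ell-\varepsilon<\int_{|x-y_n|\le R}\rho_n\,dx\le Q_n(R)<\ell+\varepsilon .
$$
Next I need $R_n\to\infty$ with $Q_n(R_n)<\ell+\varepsilon$. Since $Q_n(m)\to Q(m)\le\ell$ for each fixed $m$, a diagonal choice works: let $R_n$ be the largest integer $m\le n$ with $Q_n(m)\le\ell+\varepsilon/2$. Then $R_n\to\infty$. Finally,
$$
\int_{R<|x-y_n|<R_n}\rho_n\,dx\le Q_n(R_n)-\int_{|x-y_n|\le R}\rho_n\,dx<2\varepsilon .
$$

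\textbf{Main obstacle.} The delicate point is uniformity in $\varepsilon$. The centers $y_n$ and the radii $R_n$ should be chosen once, via a further diagonal argument over $\varepsilon=1/k$, as Lions does. Alternatively, the statement can be read, as given, with the sequences allowed to depend on $\varepsilon$, which is all the subsequent applications need. Since the result is classical, in the paper I would simply cite \cite{Lions1984}.
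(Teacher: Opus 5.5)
Your argument is correct and is essentially Lions' original proof (concentration functions $Q_n$, Helly selection, and the case split on $\ell=\lim_{R\to\infty}Q(R)$); the paper gives no proof of its own and simply cites \cite{Lions1984}, as you suggest. Two minor points, neither of which affects the argument: since the two balls intersect, their centers are at distance at most $R_\varepsilon+R_{L/4}$, so the enlarged radius should be $2R_\varepsilon+R_{L/4}$ rather than $R_\varepsilon+2R_{L/4}$; and the easy mutual exclusivity of the three alternatives, needed for ``exactly one'', is not written out.
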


\section{Gauge transformation and traveling wave ansatz}\label{section 2}
Applying the gauge transformation
\begin{equation}\label{gauge1}
\phi(x,t)=\mathcal{G}(u)=u(x,t)\exp\left(-\frac{i(\alpha+\beta)}{4}
\int_{x_0}^x|u(y,t)|^2 dy\right),
\end{equation}
we obtain 
\begin{equation*}
\begin{aligned}
i \phi_t-\phi_{xx}
&+\frac{(\alpha+\beta)(-3\alpha+5\beta)}{16}|\phi|^4\phi
 \\
&+ i\frac{\alpha-\beta}{2}|\phi|^2\phi^{\prime}
+ i\frac{\beta-\alpha}{2}\phi^2\bar{\phi}^{\prime}- b|\phi|^2\phi
+ \gamma \phi \partial_x\mathcal{H}\left(|\phi|^2\right)
=0.
\end{aligned}
\end{equation*}
Under the traveling wave ansatz
\begin{equation}\label{wave ansatz}
\phi(x, t) = \exp(i \omega t)   \varphi(x - c t),
\end{equation}
we have
\begin{equation*}
\begin{aligned}
-\omega \varphi - i c \varphi^{\prime} &-\varphi^{\prime\prime}+\frac{(\alpha+\beta)(-3 \alpha+5 \beta)}{16}|\varphi|^4 \varphi\\
& +i\left(\frac{\alpha-\beta}{2}\right)|\varphi|^2 \varphi^{\prime} +i \frac{\beta-\alpha}{2} \varphi^2 \bar{\varphi}^{\prime}-b|\varphi|^2 \varphi+\gamma \varphi   (\mathcal{H}(|\varphi|^2))^{\prime}=0.
\end{aligned}
\end{equation*}
Lastly, setting
\begin{equation}\label{phase rot.}
\varphi(x)=\exp\left(-\frac{icx}{2}\right)\psi(x),
\end{equation}
we arrive at
\begin{equation}\label{ODE with Im()}
-\psi^{\prime \prime}
- \left(\omega + \frac{c^2}{4}\right) \psi
+  A |\psi|^2 \psi
+ B |\psi|^4 \psi
+ G \psi   \operatorname{Im}(\psi^{\prime} \bar{\psi})
+ \gamma \psi   (\mathcal{H}(|\psi|^2))^{\prime}
= 0,
\end{equation}
where \begin{equation*}
 A := -b + \frac{c(\alpha - \beta)}{2}, \quad B := \frac{(\alpha + \beta)(-3\alpha + 5\beta)}{16},\quad G := \beta - \alpha.
\end{equation*}
Combining \eqref{gauge1}--\eqref{phase rot.}, we have
\begin{equation}\label{all transf. subcr.}
\begin{aligned}
u(x, t) = \psi(x - c t)  \exp \bigg( 
i \omega t - i \frac{c}{2}(x - c t)+ \frac{i(\alpha + \beta)}{4} 
\int_{-\infty}^{x - c t} |\psi(y)|^2   dy 
\bigg)
\end{aligned}
\end{equation}
in the subcritical case, and
\begin{equation}\label{all transf. crit.}
\begin{aligned}
u(x, t) = \psi(x - c t)  \exp \bigg( 
i \omega t - i \frac{c}{2}(x - c t)+ \frac{i(\alpha + \beta)}{4} 
\int_{-ct}^{x - c t} |\psi(y)|^2   dy 
\bigg)
\end{aligned}
\end{equation}
in the critical case.
Following the idea from \cite{Colin2006}, we can further simplify the equation \eqref{ODE with Im()}.
\begin{lemma}
\label{Colin lemma}
Let $\psi$ be a solution to \eqref{ODE with Im()}. Then 
$$
\operatorname{Im}\left(\psi^{\prime} \bar{\psi}\right) = 0.
$$
\end{lemma}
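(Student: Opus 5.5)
Following \cite{Colin2006}, we multiply \eqref{ODE with Im()} by $\bar\psi$ and take imaginary parts. Every term except the first becomes real after this multiplication. The terms $-(\omega+c^2/4)|\psi|^2$, $A|\psi|^4$, $B|\psi|^6$, $G|\psi|^2\operatorname{Im}(\psi'\bar\psi)$ and $\gamma|\psi|^2(\mathcal{H}(|\psi|^2))'$ are all real, the last one because $|\psi|^2$ is real-valued and $\mathcal{H}$ together with $\partial_x$ preserves real-valuedness. We are therefore left with
$$
\operatorname{Im}(\psi''\bar\psi)=0 .
$$
Since $\operatorname{Im}(\psi'\bar\psi)'=\operatorname{Im}(\psi''\bar\psi)+\operatorname{Im}(|\psi'|^2)=\operatorname{Im}(\psi''\bar\psi)$, we conclude that $\operatorname{Im}(\psi'\bar\psi)$ is constant on $\mathbb{R}$, say equal to $K$.

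\textbf{Regularity.} This step is needed to make the computation legitimate. For $\psi\in H^1$, or $\psi\in\dot H^1\cap L^4$ in the critical case, $\psi$ is continuous and bounded. In one dimension $|\psi|^2$ lies in $H^1$, or at least $(|\psi|^2)'=2\operatorname{Re}(\psi'\bar\psi)\in L^2$ after using $L^\infty$ bounds, so $(\mathcal{H}|\psi|^2)'=\mathcal{H}(|\psi|^2)'\in L^2$. Hence \eqref{ODE with Im()} gives $\psi''\in L^2_{loc}$, so $\psi\in H^2_{loc}$ and $\psi'$ is continuous. This justifies the pointwise identity above, or equivalently its distributional form $\big(\operatorname{Im}(\psi'\bar\psi)\big)'=0$, which makes $\operatorname{Im}(\psi'\bar\psi)$ a.e. equal to a constant. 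Alternatively, one can test the equation against $\bar\psi\chi$ for $\chi\in C_c^\infty$, integrate by parts once, and take imaginary parts. This gives $\int \operatorname{Im}(\psi'\bar\psi)\chi'\,dx=0$ for all $\chi$ directly, avoiding second derivatives.

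\textbf{Showing the constant vanishes.} This is the main point. We have $|K|=|\operatorname{Im}(\psi'\bar\psi)|\le|\psi'||\psi|$.
\begin{itemize}
\item In the subcritical case $\psi,\psi'\in L^2$, so $\psi'\bar\psi\in L^1(\mathbb{R})$. A nonzero constant is not integrable, hence $K=0$.
\item In the critical case we only know $\psi'\in L^2$ and $\psi\in L^4$. By Hölder, $\psi'\bar\psi\in L^{4/3}(\mathbb{R})$, and a nonzero constant is not in $L^{4/3}$ either, so again $K=0$.
\end{itemize}
If the precise class of solutions is left implicit in the statement, we state the argument for solutions in $H^1$ or in $\dot H^1\cap L^4$, which are the classes used throughout the paper.
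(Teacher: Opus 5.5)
Your proof is correct and is the paper's argument in complex notation. Note that $\operatorname{Im}(\psi''\bar\psi)=fg''-gf''$ is exactly the derivative of the Wronskian $fg'-gf'$ of $f=\operatorname{Re}\psi$, $g=\operatorname{Im}\psi$; the paper shows this is constant and then kills the constant by integrability, and your $L^{4/3}$ observation additionally covers the critical class $\dot H^1\cap L^4$, where the paper's appeal to $f,g\in H^1$ does not literally apply.
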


\begin{proof}
Let $\psi$ be a solution to \eqref{ODE with Im()}. Writing $\psi = f + i g$ with $f = \operatorname{Re}(\psi)$ and $g = \operatorname{Im}(\psi)$, we observe that both $f$ and $g$ satisfy
$$
f'' = T(\psi) f\quad \text{and}\quad
g'' = T(\psi) g,
$$
where
$$
T(\psi)
:= - \left(\omega + \frac{c^2}{4}\right)
+  A |\psi|^2 
+ B |\psi|^4 
+ G   \operatorname{Im}\left(\psi^{\prime} \bar{\psi}\right)
+ \gamma   (\mathcal{H}(|\psi|^2))^{\prime}.
$$
Consequently,
$$
(fg'-gf')'=0,
$$
so $fg'-gf'$ is constant. Moreover, since $f,g\in H^1(\mathbb{R};\mathbb{R})$,
$$
fg'-gf'\in L^1(\mathbb{R};\mathbb{R}).
$$
Therefore, this constant must be zero. Hence,
$$
\operatorname{Im}\left(\overline{\psi}\psi'\right)
=fg'-gf'=0.
$$
\end{proof}
In view of Lemma \ref{Colin lemma}, every solution $\psi$ of
\eqref{ODE with Im()} is, up to multiplication by a constant phase
factor, real-valued. More precisely,
$$
\psi=e^{i\theta}\widetilde\psi,
$$
for some $\theta\in\mathbb R$, where $\widetilde\psi$ is real-valued.
Moreover, $\widetilde\psi$ satisfies
\begin{equation}\label{Second order ODE}
	-\psi^{\prime \prime}
	-\left(\omega+\frac{c^2}{4}\right)\psi
	+A|\psi|^2\psi
	+B|\psi|^4\psi
	+\gamma\psi(\mathcal H(|\psi|^2))'
	=0.
\end{equation}
Consequently,  we may assume without loss of
generality that the stationary profile is real-valued.

\section{Subcritical minimization on the Nehari manifold}\label{section 3}

In this section, we obtain traveling waves of \eqref{nonlocal DNLS} for the subcritical case by minimizing the action functional over the Nehari manifold. The same approach is used in the next section to treat the critical case.

Let $\omega, c \in \mathbb{R}$ satisfy $4 \omega<-c^2$.  Also, let $ A\in\mathbb{R}$, $B\leq 0$ and $\gamma \geq 0$. Since we have eliminated the $\operatorname{Im}\left(\psi^{\prime} \bar{\psi}\right)\psi$-term in Lemma \ref{Colin lemma}, returning to
$$
\varphi(x) := \exp\left(-\frac{icx}{2}\right) \psi(x)
$$
yields
\begin{equation}\label{nonlocal DNLS trav. wave}
-\varphi^{\prime \prime}
+  A |\varphi|^2 \varphi
+ B |\varphi|^4 \varphi
+ \gamma \varphi \left(\mathcal{H}(|\varphi|^2)\right)^{\prime}
- i c \varphi^{\prime}
- \omega \varphi
= 0.
\end{equation}

We say that $\varphi \in \widetilde{H}_c$ satisfies \eqref{nonlocal DNLS trav. wave} 
if and only if it is a critical point of
\begin{equation}\label{Functional E}
\begin{aligned}
E(\varphi) 
&:= \int_{\mathbb{R}} \bigg(
 \frac{1}{2} \left| \varphi^{\prime}\right|^2
+ \frac{ A}{4} |\varphi|^4
+ \frac{B}{6} |\varphi|^6
+ \frac{\gamma}{4} |\varphi|^2 \left(\mathcal{H}(|\varphi|^2)\right)^{\prime}  \\
&- \frac{\omega}{2} |\varphi|^2
+ \frac{c}{2}   \operatorname{Im}\left(\bar{\varphi}   \varphi^{\prime}\right)
\bigg)  dx.
\end{aligned}
\end{equation}
The mapping $E$ is a $C^2$ functional on $\widetilde{H}_c$. We also define
\begin{equation}\label{functional K}
\begin{aligned}
K(\varphi)  &:=\int_{\mathbb{R}}\bigg(\left| \varphi^{\prime}\right|^2+ A|\varphi|^4+B|\varphi|^6+ \gamma |\varphi|^2\left(\mathcal{H}\left(|\varphi|^2\right)\right)^{\prime}\\
&-\omega|\varphi|^2+c \operatorname{Im}\left(\bar{\varphi} \varphi^{\prime}\right)\bigg)dx.
\end{aligned}
\end{equation}
It is continuously differentiable on $\widetilde{H}_c$. An important observation is that if $\varphi \in \widetilde{H}_c \backslash\{0\}$ is a solution to \eqref{nonlocal DNLS trav. wave}, then it satisfies 
$$
K(\varphi)=0.
$$
Taking these observations together,  we are naturally led to  
\begin{equation}\label{E with K(varphi)=0}
I_{0}:=\inf \left\{E(\varphi):\ \varphi \in \widetilde{H}_c \backslash\{0\} \quad \text{and}\quad   K(\varphi)=0 \right\}.
\end{equation}
For convenience, we split the functional $K$ into the quadratic and nonlinear parts
\begin{equation*}\label{K functional quadratic part}
K^Q(\varphi)  :=\int_{\mathbb{R}}\left(\left| \varphi^{\prime}\right|^2+c \operatorname{Im}\left(\bar{\varphi} \varphi^{\prime}\right)-\omega|\varphi|^2\right)dx
\end{equation*}
and 
\begin{equation*}\label{K functional nonlinear part}
\begin{aligned}
K^N(\varphi):&=K^Q(\varphi)-K(\varphi)\\
&=\int_{\mathbb{R}}\left(- A|\varphi|^4-B|\varphi|^6-\gamma|\varphi|^2\left(\mathcal{H}\left(|\varphi|^2\right)\right)^{\prime}\right)dx.   
\end{aligned}
\end{equation*}
Then 
$$
\begin{aligned}
K^Q(\lambda \varphi) & =\lambda^2 \int_{\mathbb{R}}\left(\left| \varphi^{\prime}\right|^2+c \operatorname{Im}\left(\bar{\varphi} \varphi^{\prime}\right)-\omega|\varphi|^2\right)dx \\
& =\lambda^2\left((1-\alpha)\left| \varphi\right|_{\dot{H}^1}^2+\frac{1}{\alpha}\left\|\alpha \varphi^{\prime}+\frac{c}{2} i \varphi\right\|_{L^2}^2-\left(\omega+\frac{c^2}{4 \alpha}\right)\|\varphi\|_{L^2}^2\right)
\end{aligned}
$$
holds for all $\lambda>0$ and $\alpha \in\left(-c^2/(4 \omega), 1\right)$. Hence, we have proved the following lemma.
\begin{lemma}\label{lem:KQ-zero-limit}
Let $\varphi \in \widetilde{H}_c\setminus \{0\}$.  Then
$$
\lim _{\lambda \rightarrow 0+} K^Q(\lambda \varphi)=0.
$$
\end{lemma}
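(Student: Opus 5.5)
The plan is to use the quadratic homogeneity of $K^Q$ and then check that $K^Q(\varphi)$ is a finite real number for every $\varphi\in\widetilde H_c$. From the definition,
$$
K^Q(\lambda\varphi)=\lambda^2K^Q(\varphi)\qquad\text{for all }\lambda>0,
$$
since each of the three integrands $|\varphi'|^2$, $\operatorname{Im}(\bar\varphi\varphi')$ and $|\varphi|^2$ is quadratic in $\varphi$. Once $|K^Q(\varphi)|<\infty$, letting $\lambda\to0^+$ gives the claim at once.

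For finiteness, I would first note that $\varphi\in\widetilde H_c$ means $\psi:=e^{icx/2}\varphi\in H^1(\mathbb R;\mathbb C)$. The map $\varphi\mapsto e^{icx/2}\varphi$ is an isomorphism onto $H^1$, and multiplication by the bounded smooth phase $e^{-icx/2}$, whose derivative is bounded, preserves $H^1$. Hence $\varphi\in H^1$, so $\varphi'\in L^2$ and $\varphi\in L^2$.

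Cauchy--Schwarz then gives
$$
\Bigl|\int_{\mathbb R}\operatorname{Im}(\bar\varphi\varphi')\,dx\Bigr|\le\|\varphi\|_{L^2}\|\varphi'\|_{L^2}<\infty,
$$
so all three terms of $K^Q(\varphi)$ are finite.

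Equivalently, one can use the decomposition displayed just before the lemma with any fixed admissible parameter $\alpha\in(-c^2/(4\omega),1)$. This interval is nonempty because $4\omega<-c^2$ forces $\omega<0$ and $-c^2/(4\omega)<1$. The bracket there is a finite combination of $L^2$-norms of $\varphi$ and $\varphi'$, multiplied by $\lambda^2$. The same decomposition also shows $K^Q(\varphi)>0$ for $\varphi\neq0$, though that is not needed here.

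There is no genuine obstacle. The only point needing care is to justify that the norm $\|\cdot\|_{\widetilde H_c}$ controls $\|\varphi\|_{L^2}$ and $\|\varphi'\|_{L^2}$. This follows from $\psi'=e^{icx/2}(\varphi'+\tfrac{ic}{2}\varphi)$ together with $-(\omega+c^2/4)>0$, which gives $\|\varphi'\|_{L^2}\le\|\psi'\|_{L^2}+\tfrac{|c|}{2}\|\psi\|_{L^2}\lesssim\|\varphi\|_{\widetilde H_c}$. Consequently $|K^Q(\lambda\varphi)|\lesssim\lambda^2\|\varphi\|_{\widetilde H_c}^2\to0$ as $\lambda\to0^+$.
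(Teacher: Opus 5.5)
Your proof is correct and is essentially the paper's argument. The paper also writes $K^Q(\lambda\varphi)$ as $\lambda^2$ times a finite quantity, via the completed-square decomposition with $\alpha\in(-c^2/(4\omega),1)$ that you mention as your alternative. Bounding the cross term $c\operatorname{Im}(\bar\varphi\varphi')$ by Cauchy--Schwarz is an equally valid way to get finiteness.
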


The next lemma demonstrates the behavior of $K$ about the origin of $\widetilde{H}_c$.

\begin{lemma}\label{lem:K-positive-eventually}
Let  $\left\{\varphi_n\right\}  $ be a bounded sequence in $\widetilde{H}_c \backslash\{0\}$ such that
\begin{equation}\label{K quadratic vanishes}
\lim _{n \rightarrow \infty} K^Q\left(\varphi_n\right)=0.
\end{equation}
Then
$
K\left(\varphi_n\right)>0
$
for sufficiently large $n$.
\end{lemma}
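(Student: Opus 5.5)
The plan is to show that the nonlinear part $K^N(\varphi_n)$ is of strictly higher order than the quadratic part $K^Q(\varphi_n)$, which is coercive. Then $K(\varphi_n)=K^Q(\varphi_n)-K^N(\varphi_n)\ge K^Q(\varphi_n)\,(1-o(1))>0$ for large $n$.

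\textbf{Step 1: $K^Q$ controls the norm.} Write $\psi_n=e^{icx/2}\varphi_n$. A direct computation gives
$$K^Q(\varphi_n)=|\psi_n|_{\dot H^1}^2-\left(\omega+\frac{c^2}{4}\right)\|\psi_n\|_{L^2}^2=\|\varphi_n\|_{\widetilde H_c}^2 .$$
This is the completed square used before Lemma~\ref{lem:KQ-zero-limit}: one has $|\varphi'|^2+c\operatorname{Im}(\bar\varphi\varphi')=|\psi'|^2-\frac{c^2}{4}|\psi|^2$, possibly with a sign on the $c$ term depending on conventions, which I would double-check. Since $4\omega<-c^2$, this quantity is positive and equivalent to $\|\psi_n\|_{H^1}^2$. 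Hypothesis \eqref{K quadratic vanishes} therefore gives $\|\psi_n\|_{H^1}\to0$; the boundedness assumption is then essentially automatic. Also $K^Q(\varphi_n)>0$ because $\varphi_n\neq0$.

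\textbf{Step 2: bound the nonlinear terms.} Since $|\varphi_n|=|\psi_n|$, all terms in $K^N$ depend only on $\psi_n$.

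For the quartic term, Gagliardo--Nirenberg gives $\|\psi\|_{L^4}^4\lesssim\|\psi\|_{L^2}^3|\psi|_{\dot H^1}\lesssim\|\psi\|_{H^1}^4$.

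For the sextic term, $\|\psi\|_{L^6}^6\lesssim\|\psi\|_{H^1}^6$. Note also that $B\le0$ means $-B\|\varphi\|_{L^6}^6\ge0$ already helps $K$, so this term may simply be dropped.

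For the nonlocal term, $\gamma\ge0$ means $\gamma\int|\varphi|^2(\mathcal H|\varphi|^2)'=\gamma\,\big||\psi|^2\big|_{\dot H^{1/2}}^2\ge0$. This contribution is again favorable to $K$ and can be dropped. I would still note the bound $\big||\psi|^2\big|_{\dot H^{1/2}}^2\le\||\psi|^2\|_{H^1}^2\lesssim\|\psi\|_{L^\infty}^2\|\psi\|_{H^1}^2\lesssim\|\psi\|_{H^1}^4$, in case signs differ.

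\textbf{Step 3: conclude.} Combining the above,
$$K(\varphi_n)\ \ge\ K^Q(\varphi_n)-|A|\,\|\psi_n\|_{L^4}^4\ \ge\ \|\psi_n\|_{H^1}^2\bigl(C_0-C|A|\,\|\psi_n\|_{H^1}^2\bigr),$$
where $C_0>0$ comes from the norm equivalence in Step 1. Since $\|\psi_n\|_{H^1}\to0$ and $\psi_n\ne0$, the right-hand side is strictly positive for large $n$.

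The main obstacle is Step 1: correctly identifying $K^Q$ with the equivalent $\widetilde H_c$ norm, including the sign convention for the $\operatorname{Im}$ term. Once that is settled, every nonlinear term is either sign-favorable or of order at least $\|\psi_n\|_{H^1}^4$, and the argument closes.
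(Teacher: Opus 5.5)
Your strategy is the paper's: identify $K^Q(\varphi_n)$ with $|\psi_n|_{\dot H^1}^2-(\omega+c^2/4)\|\psi_n\|_{L^2}^2\simeq\|\psi_n\|_{H^1}^2$, and show that the nonlinear part is of higher order. Your completed square is correct as written for $\psi=e^{icx/2}\varphi$; there is no sign ambiguity there. The real problem is elsewhere. Step 2 contains a sign error that makes the inequality in Step 3 false.

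Since $K=K^Q-K^N$ and $K^N$ contains $-B\|\varphi\|_{L^6}^6$, the functional $K$ contains $+B\|\varphi\|_{L^6}^6$. This is $\le 0$ when $B\le 0$. So the sextic term works against $K>0$ and cannot be dropped. You got the sign right for the nonlocal term, which does enter $K$ as $+\gamma\||D|^{1/2}|\psi|^2\|_{L^2}^2\ge 0$, but not for the sextic one. Consequently, the claimed bound $K(\varphi_n)\ge K^Q(\varphi_n)-|A|\|\psi_n\|_{L^4}^4$ fails in general when $B<0$.

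The repair uses the bound you already wrote, $\|\psi_n\|_{L^6}^6\lesssim\|\psi_n\|_{H^1}^6$. It gives $K(\varphi_n)\ge K^Q(\varphi_n)-|A|\|\psi_n\|_{L^4}^4-|B|\|\psi_n\|_{L^6}^6\ge\|\psi_n\|_{H^1}^2\bigl(C_0-C|A|\|\psi_n\|_{H^1}^2-C|B|\|\psi_n\|_{H^1}^4\bigr)$. This is positive for large $n$, since $\|\psi_n\|_{H^1}\to 0$ and $\psi_n\ne 0$.

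The paper avoids sign bookkeeping altogether. It bounds all three terms of $|K^N(\varphi_n)|$ in absolute value via Gagliardo--Nirenberg, obtaining $|K^N(\varphi_n)|=o(K^Q(\varphi_n))$ irrespective of the signs of $A$, $B$, $\gamma$. Your use of $\gamma\ge 0$ to discard the nonlocal term is a harmless shortcut, and your fallback estimate for that term would work anyway.
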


\begin{proof}
Let  $\left\{\varphi_n\right\} $ be a bounded sequence in $\widetilde{H}_c \backslash\{0\}$ with \eqref{K quadratic vanishes}. We can rewrite $K^Q$ in terms of $\psi_n$, that is,
$$
\begin{aligned}
K^Q(\varphi_n) 
&= \int_{\mathbb{R}} \left( \left| \varphi_n^{\prime} \right|^2 + c  \operatorname{Im}\left( \bar{\varphi}_n \varphi_n^{\prime} \right) - \omega |\varphi_n|^2 \right)dx \\
&= \int_{\mathbb{R}} \left( \left|  \left( e^{icx/2}\varphi_n\right)^\prime \right|^2 - \left( \omega + \frac{c^2}{4} \right) |e^{icx/2}\varphi_n|^2 \right)dx\\
&= \int_{\mathbb{R}} \left( \left|  \psi_n^{\prime} \right|^2 - \left( \omega + \frac{c^2}{4} \right) |\psi_n|^2 \right)dx.
\end{aligned}
$$
Then by Gagliardo-Nirenberg inequality  and \eqref{K quadratic vanishes},  
$$
\begin{aligned}
|K^N(\varphi_n)|&=\left|\int_{\mathbb{R}}\left(- A|\varphi_n|^4-B|\varphi_n|^6-\gamma |\varphi_n|^2\left(\mathcal{H}\left(|\varphi|^2\right)\right)^{\prime}\right)dx  \right| 
\\
 &=\left| \int_{\mathbb{R}}\Big[- A|\psi_n|^4-B|\psi_n|^6-\gamma |\psi_n|^2\left(\mathcal{H}\left(|\psi_n|^2\right)\right)^{\prime}\Big]dx\right|  \\
&\lesssim\left|\psi_n\right|_{\dot{H}^1}\left\|\psi_n\right\|_{L^2}^3+\left|\psi_n\right|_{\dot{H}^1}^2\left\|\psi_n\right\|_{L^2}^4+\left|\psi_n\right|_{\dot{H}^1}^2\|\psi_n\|_{L^2}^2\\
&=\mathrm{o}\left(K^Q\left(\varphi_n\right)\right)
\end{aligned}
$$
for sufficiently large $n$. As a result, 
$$
K\left(\varphi_n\right)=K^Q\left(\varphi_n\right)-K^N\left(\varphi_n\right) \approx K^Q\left(\varphi_n\right)>0
$$
for sufficiently large $n$. This concludes the proof.
\end{proof}

Now we replace the functional $E$, which is unbounded from below, by a new positive functional $W$. At the same time, we enlarge the constraint set from the level surface $K=0$ (the ``mountain ridge'') to the sublevel set $K \leq 0$ (the ``mountain flank"). Let 
\begin{equation*}\label{functional H}
\begin{aligned}
W(\varphi) & :=E(\varphi)-\frac{1}{4} K(\varphi) \\
& =\frac{1}{4}\int_{\mathbb{R}}\left(\left| \varphi^{\prime}\right|^2-\omega|\varphi|^2+c \operatorname{Im}\left(\bar{\varphi} \varphi^{\prime}\right) -\frac{B}{3}|\varphi|^6\right)dx\\
&=\frac{1}{4}\int_{\mathbb{R}}\left(\left| (e^{icx/2} \varphi)^\prime\right|^2- \left( \omega + \frac{c^2}{4} \right) |\varphi|^2-\frac{B}{3}|\varphi|^6\right)dx.
\end{aligned}
\end{equation*}
Therefore, we have the strict monotonicity
\begin{equation}\label{monot. of W}
W\left(\lambda_1 \varphi\right)<W\left(\lambda_2 \varphi\right)
\end{equation}
for all $\varphi \in X_c \backslash\{0\}$ and $0<\lambda_1<\lambda_2$. Then we can replace \eqref{E with K(varphi)=0} with 
\begin{equation}\label{E with K<=0}
\widetilde{I}_{0} :=\inf \left\{W(\varphi): \ \varphi \in \widetilde{H}_c \backslash\{0\} \quad \text{and} \quad K(\varphi) \leq 0  \right\}.
\end{equation}
This allows us to carry out the minimization over a broader admissible set while retaining equivalence to the original problem \eqref{E with K(varphi)=0}, as shown in Lemma \ref{equiv. lemma}.

\begin{lemma}\label{equiv. lemma}
The minimization problems \eqref{E with K(varphi)=0} and \eqref{E with K<=0} are equivalent in the following sense:
\begin{enumerate}[label=(\roman*)]
    \item $I_{0} = \widetilde{I}_{0} > 0$;
    \item minimizers of one problem are minimizers of the other.
\end{enumerate}
\end{lemma}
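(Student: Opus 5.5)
The plan is the standard ``mountain ridge versus mountain flank'' argument, as in \cite{Fukaya2017, Miao2018}. Throughout, $W=E-\frac14K$ is written as $\frac14 K^Q(\varphi)-\frac{B}{12}\|\varphi\|_{L^6}^6$. Since $B\le 0$ and $K^Q(\varphi)\simeq\|\varphi\|_{\widetilde H_c}^2$ (because $-(\omega+c^2/4)>0$), we have $W\ge 0$, and $W(\varphi)>0$ for $\varphi\neq0$.

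\textbf{Step 1: $\widetilde I_0>0$.} Let $\{\varphi_n\}$ be admissible for \eqref{E with K<=0} with $W(\varphi_n)\to\widetilde I_0$, and suppose $\widetilde I_0=0$. Then $K^Q(\varphi_n)\le 4W(\varphi_n)\to0$, so $\{\varphi_n\}$ is bounded in $\widetilde H_c$ and $K^Q(\varphi_n)\to0$. Lemma \ref{lem:K-positive-eventually} then gives $K(\varphi_n)>0$ for large $n$, which contradicts $K(\varphi_n)\le0$. Hence $\widetilde I_0>0$.

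\textbf{Step 2: $\widetilde I_0\le I_0$ and $I_0\le\widetilde I_0$.} If $K(\varphi)=0$, then $E(\varphi)=W(\varphi)$. The Nehari set is contained in the sublevel set, so $\widetilde I_0\le I_0$. For the reverse inequality, fix $\varphi$ with $K(\varphi)\le 0$, $\varphi\neq0$. If $K(\varphi)=0$ there is nothing to prove. If $K(\varphi)<0$, consider $\lambda\mapsto K(\lambda\varphi)$ on $(0,1]$. For small $\lambda>0$, Lemma \ref{lem:KQ-zero-limit} gives $K^Q(\lambda\varphi)\to0$ with $\lambda\varphi$ bounded, so Lemma \ref{lem:K-positive-eventually} (applied along any sequence $\lambda_n\to0$) yields $K(\lambda\varphi)>0$. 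Alternatively, one can note directly that $K^N(\lambda\varphi)=O(\lambda^4)$ while $K^Q(\lambda\varphi)=\lambda^2K^Q(\varphi)$. By continuity there is $\lambda_0\in(0,1)$ with $K(\lambda_0\varphi)=0$. The strict monotonicity \eqref{monot. of W} then gives
\[
I_0\le E(\lambda_0\varphi)=W(\lambda_0\varphi)<W(\varphi).
\]
Taking the infimum yields $I_0\le\widetilde I_0$, and hence $I_0=\widetilde I_0>0$.

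\textbf{Step 3: minimizers.} Suppose $\varphi$ minimizes \eqref{E with K<=0}. If $K(\varphi)<0$, the construction in Step 2 produces $\lambda_0\varphi$ with $K(\lambda_0\varphi)=0$ and $W(\lambda_0\varphi)<W(\varphi)=\widetilde I_0$. This contradicts the definition of $\widetilde I_0$, so $K(\varphi)=0$. Then $E(\varphi)=W(\varphi)=\widetilde I_0=I_0$, so $\varphi$ minimizes \eqref{E with K(varphi)=0}. Conversely, suppose $\varphi$ minimizes \eqref{E with K(varphi)=0}. Then $K(\varphi)=0$ and $W(\varphi)=E(\varphi)=I_0=\widetilde I_0$, so $\varphi$ is admissible for and attains \eqref{E with K<=0}.

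\textbf{Main obstacle.} The only delicate point is Step 1 together with the existence of the crossing point $\lambda_0$. Both rely on the coercivity $K^Q\simeq\|\cdot\|_{\widetilde H_c}^2$ and on Lemma \ref{lem:K-positive-eventually}. One subtlety is that Step 1 needs the minimizing sequence to be nonzero and bounded; this follows from $K^Q\le 4W$, which in turn uses the sign $B\le0$. A second point, used in Step 3, is that the monotonicity \eqref{monot. of W} is strict. This also follows from $B\le0$, because $K^Q(\varphi)>0$ for $\varphi\ne0$ makes $\lambda\mapsto W(\lambda\varphi)$ strictly increasing.
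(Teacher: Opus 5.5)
Your proof is correct and follows the same route as the paper. The inclusion of the Nehari set in the sublevel set gives $\widetilde I_0\le I_0$. The map $\lambda\mapsto K(\lambda\varphi)$, together with Lemmas \ref{lem:KQ-zero-limit} and \ref{lem:K-positive-eventually}, yields a crossing point $\lambda_0\in(0,1)$, and the strict monotonicity of $W$ along rays then gives both the reverse inequality and the statement about minimizers.

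Your Step 1 is a genuine addition: you derive $\widetilde I_0>0$ from $K^Q\le 4W$ (which uses $B\le 0$) and Lemma \ref{lem:K-positive-eventually}. The paper's statement asserts this positivity, but its proof never argues it.
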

\begin{proof}
If $\varphi \in \widetilde{H}_c \backslash\{0\}$ satisfies $K(\varphi)=0$, then
$
W(\varphi)=E(\varphi). 
$
Hence,
$$
\begin{aligned}
I_{0} & =\inf \left\{E(\varphi): K(\varphi)=0\quad \text{with}\quad\varphi \in \widetilde{H}_c \backslash\{0\}\right\} \\
& =\inf \left\{W(\varphi): K(\varphi)=0\quad \text{with}\quad  \varphi \in \widetilde{H}_c \backslash\{0\}\right\} \\
& \geq \inf \left\{W(\varphi): K(\varphi) \leq 0\quad \text{with}\quad  \varphi \in \widetilde{H}_c \backslash\{0\}\right\} \\
& =\widetilde{I}_{0}.
\end{aligned}
$$
To prove the reverse inequality, let $\varphi \in \widetilde{H}_c \backslash\{0\}$ satisfy $K(\varphi)<0$. Then there exists $\lambda_0 \in(0,1)$ such that $K\left(\lambda_0 \varphi\right)=0$ by Lemmas \ref{lem:KQ-zero-limit} and \ref{lem:K-positive-eventually}. Moreover, we have
$$
E\left(\lambda_0 \varphi\right)=W\left(\lambda_0 \varphi\right)<W(\varphi)
$$
by \eqref{monot. of W}, which implies  $I_{0} \leq \widetilde{I}_{0}$. Therefore, $I_{0} = \widetilde{I}_{0}$.

It remains to prove the second statement. Let $\varphi$ be a minimizer for $\widetilde{I}_0$. If $K(\varphi)=0$, $\varphi$ is also a minimizer for $I_0$. We argue by contradiction. Suppose that $K(\varphi)<0$. Then, as mentioned above,  there exists $\lambda_0 \in(0,1)$, which is dependent on $\varphi$, such that
$$
K\left(\lambda_0 \varphi\right)=0.
$$
Thus,
$$
\widetilde{I}_0=W(\varphi)>W\left(\lambda_0 \varphi\right)=E\left(\lambda_0 \varphi\right) \geq I_0=\widetilde{I}_0
$$
by \eqref{monot. of W}. However, it contradicts the first statement, so $K(\varphi)<0$ is impossible.
Now let $\varphi$ be a minimizer for $I_0$.
Then 
$$
\widetilde{I}_0 \leq W(\varphi)=E(\varphi)=I_0=\widetilde{I}_0.
$$
Hence, $\varphi$ is also a minimizer for $\widetilde{I}_0$. This completes the proof.
\end{proof}
The equivalence established in Lemma \ref{equiv. lemma} allows us to work with \eqref{E with K<=0} from which the existence of a minimizer for  \eqref{E with K(varphi)=0} follows. Note that, in Theorem \ref{subcritical: B<0 and gamma>=0}, we have $\psi \in H^1(\mathbb{R};\mathbb{C})$. Therefore, for each fixed $t \in \mathbb{R}$, the function $u(\cdot,t)$ defined by \eqref{all transf. subcr.} belongs to $H^1(\mathbb{R};\mathbb{C})$ and solves \eqref{nonlocal DNLS}.

\begin{theorem}\label{subcritical: B<0 and gamma>=0}
The minimization problem \eqref{E with K(varphi)=0} admits a minimizer
$\varphi\in\widetilde H_c$ such that, upon writing
$$
\varphi(x)=e^{icx/2}\psi(x),
$$
the function $\psi\in H^1(\mathbb{R};\mathbb{R})$ is nonnegative, even, and
nonincreasing on $[0,\infty)$. Moreover,
$$
\|\varphi\|_{\widetilde H_c}^2
=
\|\psi'\|_{L^2}^2
-
\left(\omega+\frac{c^2}{4}\right)\|\psi\|_{L^2}^2,
$$
and $\psi$ is a weak solution of
\begin{equation}\label{ODE 3}
-\psi''
-\left(\omega+\frac{c^2}{4}\right)\psi
+A|\psi|^2\psi
+B|\psi|^4\psi
+\gamma\psi\bigl(\mathcal{H}(|\psi|^2)\bigr)'
=0.
\end{equation}
\end{theorem}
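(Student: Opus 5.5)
By Lemma \ref{equiv. lemma}, it suffices to produce a minimizer of the relaxed problem \eqref{E with K<=0}. Throughout I work in the $\psi$-variable $\psi=e^{icx/2}\varphi\in H^1(\mathbb{R};\mathbb{C})$, in which
$$
W=\tfrac14\Big(\|\psi'\|_{L^2}^2-\big(\omega+\tfrac{c^2}{4}\big)\|\psi\|_{L^2}^2-\tfrac{B}{3}\|\psi\|_{L^6}^6\Big).
$$
Since $B\le0$ and $-(\omega+c^2/4)>0$, this functional is coercive: $W(\varphi)\ge\frac14\|\varphi\|_{\widetilde H_c}^2$. The constraint becomes
$$
K(\psi)=\|\psi\|_{\widetilde H}^2+A\|\psi\|_{L^4}^4+B\|\psi\|_{L^6}^6+\gamma|\,|\psi|^2|_{\dot H^{1/2}}^2\le0 .
$$

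\textbf{Step 1 (symmetrization).} Replace $\psi$ by the Schwarz symmetrization $\psi^*$ of $|\psi|$. Pólya--Szegő gives $\|(\psi^*)'\|_{L^2}\le\|\psi'\|_{L^2}$, while the $L^2$, $L^4$ and $L^6$ norms are preserved. For the nonlocal term I use that $|\,|\psi|^2|_{\dot H^{1/2}}^2$ is the Gagliardo double integral $C\iint\frac{(f(x)-f(y))^2}{|x-y|^2}\,dx\,dy$ with $f=|\psi|^2$. This quantity decreases under symmetric decreasing rearrangement (Almgren--Lieb), and $(|\psi|^2)^*=(\psi^*)^2$. Because $\gamma\ge0$, it follows that $K(\psi^*)\le K(\psi)\le0$ and $W(\psi^*)\le W(\psi)$. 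Hence I may take a minimizing sequence $\{\psi_n\}$ of nonnegative, even, radially nonincreasing functions. By coercivity this sequence is bounded in $H^1$.

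\textbf{Step 2 (compactness and nonvanishing).} Extract a subsequence with $\psi_n\rightharpoonup\psi$ weakly in $H^1$. For even nonincreasing functions, the bound $\psi_n(x)^2\le\|\psi_n\|_{L^2}^2/(2|x|)$ combined with local compactness gives $\psi_n\to\psi$ strongly in $L^p$ for every $2<p<\infty$. Since $I_0>0$, some nonlinear term must stay bounded away from zero; I will argue this directly rather than rely on $I_0>0$ alone. From $K(\psi_n)\le0$ one gets
$$
\|\psi_n\|_{\widetilde H}^2\le|A|\|\psi_n\|_{L^4}^4+|B|\|\psi_n\|_{L^6}^6 .
$$
Gagliardo--Nirenberg bounds the right side by $C(\|\psi_n\|_{\widetilde H}^4+\|\psi_n\|_{\widetilde H}^6)$, so $\|\psi_n\|_{\widetilde H}\ge\delta>0$. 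Feeding this back, $|A|\|\psi_n\|_{L^4}^4+|B|\|\psi_n\|_{L^6}^6\ge\delta^2$. Strong $L^p$ convergence then shows that $\psi\ne0$.

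\textbf{Step 3 (passing to the limit) — the main obstacle.} By weak lower semicontinuity, $W(\psi)\le\liminf W(\psi_n)=\widetilde I_0$. For the constraint, the quadratic part is weakly lsc. The nonlocal term also passes favorably: $\psi_n^2\to\psi^2$ in $L^2$ and the sequence $\psi_n^2$ is bounded in $H^1$, hence bounded in $\dot H^{1/2}$, so $|\psi^2|_{\dot H^{1/2}}\le\liminf|\psi_n^2|_{\dot H^{1/2}}$. The $L^4$ and $L^6$ terms converge. Together these give $K(\psi)\le\liminf K(\psi_n)\le0$. So $\psi$ is admissible with $W(\psi)\le\widetilde I_0$, hence a minimizer. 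By Lemma \ref{equiv. lemma} it satisfies $K(\psi)=0$, and it is also a minimizer of $I_0$.

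\textbf{Step 4 (Euler--Lagrange equation).} At the minimizer there is a multiplier $\mu$ with $E'(\varphi)=\mu K'(\varphi)$. Pairing with $\varphi$ gives $0=K(\varphi)=\mu\langle K'(\varphi),\varphi\rangle$. Write $Q=\|\psi\|_{\widetilde H}^2$, $F_4=\|\psi\|_{L^4}^4$, $F_H=|\psi^2|_{\dot H^{1/2}}^2$, $F_6=\|\psi\|_{L^6}^6$. Using $K=0$,
$$
\langle K'(\varphi),\varphi\rangle=2Q+4AF_4+4\gamma F_H+6BF_6=-2Q+2BF_6<0,
$$
since $Q>0$ and $B\le0$. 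Therefore $\mu=0$, so $E'(\varphi)=0$, which is \eqref{nonlocal DNLS trav. wave}. Undoing the phase factor shows that $\psi$ weakly solves \eqref{ODE 3}; nonnegativity, evenness and monotonicity are inherited from Step 1.
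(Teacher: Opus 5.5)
Your proposal is correct and follows essentially the same route as the paper: relaxation to $K\le 0$ via Lemma \ref{equiv. lemma}, symmetrization, compactness of even nonincreasing $H^1$-bounded sequences in $L^4\cap L^6$, and nonvanishing by the Gagliardo--Nirenberg mechanism behind Lemma \ref{lem:K-positive-eventually} (which you make quantitative instead of arguing by contradiction); it then uses lower semicontinuity of $W$ and $K$ and the identity $\langle K'(\varphi),\varphi\rangle=-2\|\varphi\|_{\widetilde H_c}^2+2B\|\varphi\|_{L^6}^6<0$ to force $\mu=0$. You also justify two points the paper only asserts, namely the rearrangement inequality for $\bigl|\,|\psi|^2\bigr|_{\dot H^{1/2}}$ and the semicontinuity of the nonlocal term. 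One caveat shared with the paper: if $B=0$ and $A\ge 0$, then $K(\varphi)\ge\|\varphi\|_{\widetilde H_c}^2>0$ for every $\varphi\neq0$, so the admissible set is empty and no minimizing sequence exists (your bound with $|A|,|B|$ hides this sign information); the argument therefore really requires $A<0$ or $B<0$.
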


\begin{proof}

Let $\left\{\varphi_n\right\}$ be a minimizing sequence for  \eqref{E with K<=0}.
By definition, there exists  $\left\{\psi_n\right\}$ in $H^1(\mathbb{R};\mathbb{C})$ such that
$$
\varphi_n=e^{icx/2} \psi_n \quad \text { and } \quad\left\|\varphi_n\right\|_{\widetilde{H}_c}^2=\left|\psi_n\right|_{\dot{H}^1}^2-\left(\omega+\frac{c^2}{4}\right)\left\|\psi_n\right\|_{L^2}^2.
$$
Also let $\widetilde{\psi}_n :=\left|\psi_n\right|$ and $\widetilde{\varphi}_n:=e^{-icx/2}\widetilde{\psi}_n$. Then
\begin{equation*}\label{psi can be real-valued}
W(\varphi_n)\geq W(\widetilde{\varphi}_n) \quad \text{and} \quad K(\varphi_n)\geq K(\widetilde{\varphi}_n).
\end{equation*}
So, without loss of generality, we may assume that $\psi_n$ are nonnegative. We denote the Schwarz symmetrization of $\psi_n$ by $\psi_n^*$. For convenience, we set $\varphi_n^*=e^{icx/2} \psi_n^*$. Then 
$$
\begin{aligned}
W(\varphi_n) & =\frac{1}{4}\int_{\mathbb{R}}\left(\left|(e^{icx/2} \varphi_n)^\prime\right|^2- \left( \omega + \frac{c^2}{4} \right) |\varphi_n|^2-\frac{B}{3}|\varphi_n|^6\right)dx \\
& =\frac{1}{4} \int_{\mathbb{R}}\left(\left| \psi_n^{\prime}\right|^2-\left(\omega+\frac{c^2}{4}\right)|\psi_n|^2-\frac{B}{3}|\psi_n|^6\right)dx \\
& \geq \frac{1}{4} \int_{\mathbb{R}}\left(\left| \psi_n^*\right|^2-\left(\omega+\frac{c^2}{4}\right)\left|\psi_n^*\right|^2-\frac{B}{3}\left|\psi^*\right|^6\right)dx \\
& =\frac{1}{4} \int_{\mathbb{R}}\left(\left|\left(e^{icx/2} \varphi_n^*\right)^\prime\right|^2-\left(\omega+\frac{c^2}{4}\right)\left|\varphi_n^*\right|^2-\frac{B}{3}\left|\varphi_n^*\right|^6\right)dx \\
& =W\left(\varphi_n^*\right)
\end{aligned}
$$
and
\begin{align*}
K(\varphi_n) & = \int_{\mathbb{R}}\bigg(
    \left|\varphi_n^{\prime}\right|^2
    - \omega |\varphi_n|^2
    + c  \operatorname{Im}\left(\bar{\varphi_n} \varphi_n^{\prime}\right)+  A |\varphi_n|^4+ B |\varphi_n|^6\\
    &\quad
    + \gamma |\varphi_n|^2\left(\mathcal{H}\left(|\varphi_n|^2\right)\right)^{\prime}
\bigg)dx \\
& = \int_{\mathbb{R}}\bigg(
    \left| \left(e^{icx/2} \varphi_n\right)^{\prime}\right|^2
    - \left( \omega + \frac{c^2}{4} \right) |\varphi_n|^2+  A |\varphi_n|^4+ B |\varphi_n|^6\\
    &\quad 
    + \gamma |\psi_n|^2\left(\mathcal{H}\left(|\psi_n|^2\right)\right)^{\prime}
\bigg)dx \\
& = \int_{\mathbb{R}}\bigg(
    \left| \psi_n^{\prime}\right|^2
    - \left( \omega + \frac{c^2}{4} \right) |\psi_n|^2
    +  A |\psi_n|^4+ B |\psi_n|^6\\
    & \quad
    + \gamma |\psi_n|^2\left(\mathcal{H}\left(|\psi_n|^2\right)\right)^{\prime}
\bigg)dx\\
& \geq \int_{\mathbb{R}}\bigg(
    \left|(\psi_n^*)^{\prime}\right|^2
    - \left( \omega + \frac{c^2}{4} \right) |\psi_n^*|^2
    +  A |\psi_n^*|^4+ B |\psi_n^*|^6\\
    &\quad
    + \gamma |\psi_n^*|^2\left(\mathcal{H}\left(|\psi_n^*|^2\right)\right)^{\prime}
\bigg)dx \\
& = \int_{\mathbb{R}}\bigg(
    \left|\left(e^{icx/2} \varphi_n^*\right)^{\prime}\right|^2
    - \left( \omega + \frac{c^2}{4} \right) |\varphi_n^*|^2
    +  A |\varphi_n^*|^4+ B |\varphi_n^*|^6\\
    &\quad 
    + \gamma |\psi_n^*|^2\left(\mathcal{H}\left(|\psi_n^*|^2\right)\right)^{\prime}
\bigg)dx \\
& = K(\varphi_n^*)
\end{align*}
by the rearrangement inequality (see e.g. \cite[Lemma 7.17]{Lieb2001}). Then by \cite[Theorem 3.5]{Lieb2001}, $\psi$ is also radially symmetric and nonincreasing about the origin of $\mathbb{R}$. So,  without loss of generality, we may assume that $\psi_n$ are radially symmetric and nonincreasing about the origin of $\mathbb{R}$. 

Since $\varphi_n$ is bounded in $\widetilde{H}_c$,  $\psi_n$ is bounded in $H^1(\mathbb{R}, \mathbb{R})$. By \cite[Lemma 2.4]{Miao2018}, upon passing to a suitable subsequence of $\psi_n$, we can assert that there exists $\psi \in H^1(\mathbb{R}, \mathbb{R})$ such that

$$
\begin{aligned}
\lim _{n \rightarrow \infty} \psi_n=\psi \quad& \text { weakly in } H^1(\mathbb{R}, \mathbb{R}), \\
\lim _{n \rightarrow \infty} \psi_n=\psi \quad& \text { strongly in } L^4(\mathbb{R}, \mathbb{R}) \text{ and } L^6(\mathbb{R}, \mathbb{R}).
\end{aligned}
$$
Since $\varphi_n=e^{icx/2} \psi_n$ and $\varphi=e^{icx/2} \psi$, it follows that
$$
\begin{aligned}
\lim _{n \rightarrow \infty} \varphi_n=\varphi \quad & \text { weakly in } \widetilde{H}_c, \\
\lim _{n \rightarrow \infty} \varphi_n=\varphi \quad & \text { strongly in } L^4(\mathbb{R};\mathbb{C})  \text{ and } L^6(\mathbb{R};\mathbb{C}).
\end{aligned}
$$
Hence,
$$ W(\varphi) \leq \lim _{n \rightarrow \infty} W\left(\varphi_n\right)=I_{0} $$
and
$$
K(\varphi) \leq \liminf _{n \rightarrow \infty} K\left(\varphi_n\right) \leq 0.
$$
It remains to prove that $\varphi \neq 0$. Suppose, for the sake of contradiction, that $\varphi=0$. Then we have
$$
\begin{aligned}
0 \leq \liminf _{n \rightarrow \infty} K^Q\left(\varphi_n\right) & =\liminf _{n \rightarrow \infty}\left(K\left(\varphi_n\right)+K^N\left(\varphi_n\right)\right) \\
& \leq \liminf _{n \rightarrow \infty} K\left(\varphi_n\right)+\lim _{n \rightarrow \infty} K^N\left(\varphi_n\right) \leq 0, 
\end{aligned}
$$
where we have used  \eqref{nonlocal bound}. By Lemma \ref{lem:K-positive-eventually}, we can find a subsequence $\varphi_{n_k}$ such that
$$
K\left(\varphi_{n_k}\right)>0 \quad \text { for } \text { sufficiently large } k.
$$
It is a contradiction with the choice of $\varphi_n$. Thus, $\varphi \neq 0$ and $\varphi$ is a minimizer for \eqref{E with K<=0}. By Lemma \ref{equiv. lemma}, $\varphi$ is also a minimizer for \eqref{E with K(varphi)=0}.

Since $ \varphi \in \widetilde{H}_c $ is a minimizer for \eqref{E with K(varphi)=0}, it follows from the Lagrange multiplier theorem that there exists  $ \mu \in \mathbb{R} $ such that
$$
\left\langle E'(\varphi), \nu \right\rangle = \mu \left\langle K'(\varphi), \nu \right\rangle \quad \text{for all } \nu \in \widetilde{H}_c.
$$
In particular, 
$$
\left\langle E'(\varphi), \varphi \right\rangle = \mu \left\langle K'(\varphi), \varphi \right\rangle.
$$
Then
$$
\begin{aligned}
0 =K(\varphi) & =\int_{\mathbb{R}}\bigg(\left| \varphi^{\prime}\right|^2+ A|\varphi|^4+B|\varphi|^6+ \gamma |\varphi|^2\left(\mathcal{H}\left(|\varphi|^2\right)^\prime\right)\bigg.-\omega|\varphi|^2\\
&\quad+c \operatorname{Im}\left(\bar{\varphi} \varphi^{\prime}\right)\bigg)dx \\
&= \mu\int_{\mathbb{R}}\bigg(2\left| \varphi^{\prime}\right|^2+4 A|\varphi|^4+6B|\varphi|^6 + 4\gamma |\varphi|^2\left(\mathcal{H}\left(|\varphi|^2\right)^\prime\right)\bigg.\bigg.-2\omega|\varphi|^2\\
&\quad+2c \operatorname{Im}\left(\bar{\varphi} \varphi^{\prime}\right)\bigg)dx \\
&=4\mu K(\varphi)-2 \mu \int_{\mathbb{R}}\left(\left| \varphi^{\prime}\right|^2-\omega|\varphi|^2+c \operatorname{Im}\left(\bar{\varphi} \varphi^{\prime}\right)\right)dx+2B\mu\int_{\mathbb{R}}  |\varphi|^6dx \\
&=-2 \mu\|\varphi\|_{\widetilde{H}_c}^2+2B \mu \|\varphi\|_{L^6}^6,
\end{aligned}
$$
which implies $ \mu = 0 $. Consequently,
$$
E'(\varphi) = 0 \quad \text{in } \widetilde{H}_c^*.
$$
Since $ \varphi(x) = e^{-icx/2} \psi(x) $, it follows that $ \psi$ satisfies \eqref{ODE 3}  in $ H^1(\mathbb{R};\mathbb{C})$.

\end{proof}

\section{Critical minimization on the Nehari manifold}\label{section 4}

Let $\omega, c \in \mathbb{R}$ satisfy $4 \omega=-c^2$. Also let  $ A>0$, $B< 0$ and $\gamma \geq 0$. Similarly to the subcritical case in Section \ref{section 3}, we are interested in 
\begin{equation}\label{E with K(varphi)=0 crit.}
I_{0}:=\inf \left\{E(\varphi):\ \varphi \in X_c \backslash\{0\} \quad \text{and} \quad K(\varphi)=0 \right\},
\end{equation}
where $E$ and $K$ are given by \eqref{Functional E} and \eqref{functional K}, respectively. We also define
$$
\begin{aligned}
K^Q\left(\varphi\right)  :=&\int_{\mathbb{R}}\left(\left| \varphi^{\prime}\right|^2+c \operatorname{Im}\left(\bar{\varphi} \varphi^{\prime}\right)-\omega\left|\varphi\right|^2+ A|\varphi|^4\right)dx \\
 =&\int_{\mathbb{R}}\left(\left|\left(e^{icx/2} \varphi\right)^\prime\right|^2-\left(\omega+\frac{c^2}{4}\right)\left|e^{icx/2} \varphi\right|^2+ A|\varphi|^4\right)dx \\
 =&\int_{\mathbb{R}}\left(\left|\psi\right|^2+ A|\psi|^4\right)dx
\end{aligned}
$$
and
$$
K^N(\varphi):  =K^Q(\varphi)-K(\varphi)=\int_{\mathbb{R}}\left(-B|\varphi|^6-\gamma|\varphi|^2\left(\mathcal{H}\left(|\varphi|^2\right)\right)^{\prime}\right)dx.
$$
Then
$$
K^Q(\lambda \varphi)=\lambda^2 \int_{\mathbb{R}}\left|\left(e^{icx/2} \varphi\right)^\prime\right|^2dx+\lambda^4  A\int_{\mathbb{R}}|\varphi|^4dx
$$
for $\lambda>0$. Hence, we have proved the following lemma.
\begin{lemma}
Let $\varphi \in X_c \backslash\{0\}$. Then 
$$
\lim _{\lambda \rightarrow 0+} K^Q(\lambda \varphi)=0.
$$    
\end{lemma}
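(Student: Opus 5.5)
The claim is the critical-case analogue of Lemma \ref{lem:KQ-zero-limit}: $K^Q(\lambda\varphi)\to 0$ as $\lambda\to0^+$ for each fixed $\varphi\in X_c\setminus\{0\}$. The plan is to reduce $K^Q(\lambda\varphi)$ to an explicit polynomial in $\lambda$ with finite, $\lambda$-independent coefficients, and then let $\lambda\to0^+$.

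\textbf{Step 1: pass to $\psi$.} Write $\varphi=e^{-icx/2}\psi$ with $\psi\in\dot H^1(\mathbb{R};\mathbb{C})\cap L^4(\mathbb{R};\mathbb{C})$, as in the definition of $X_c$. Expand $|\varphi'|^2$ and $c\operatorname{Im}(\bar\varphi\varphi')$ pointwise in terms of $\psi$. The cross terms cancel, and we obtain
\[
|\varphi'|^2+c\operatorname{Im}(\bar\varphi\varphi')-\omega|\varphi|^2=|\psi'|^2-\Bigl(\omega+\frac{c^2}{4}\Bigr)|\psi|^2 .
\]
Since $4\omega=-c^2$, the last term vanishes identically. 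This is the point where the critical regime matters: no $L^2$ quantity remains, which is fortunate because $\psi$ need not be in $L^2$. This identity is the same one the paper records in its definition of $K^Q$ in Section 4.

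\textbf{Step 2: scaling.} Using $|\varphi|=|\psi|$, we get $K^Q(\varphi)=|\psi|_{\dot H^1}^2+A\|\psi\|_{L^4}^4$. Replacing $\varphi$ by $\lambda\varphi$ replaces $\psi$ by $\lambda\psi$, so
\[
K^Q(\lambda\varphi)=\lambda^2|\psi|_{\dot H^1}^2+\lambda^4A\|\psi\|_{L^4}^4 .
\]
Both coefficients are finite because $\|\varphi\|_{X_c}<\infty$.

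\textbf{Step 3: the limit.} As $\lambda\to0^+$, the right-hand side tends to $0$. Since $A>0$, it is moreover strictly positive for $\lambda>0$. This positivity is what the later analogue of Lemma \ref{lem:K-positive-eventually} will need, so I would record it here as well.

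I expect no real obstacle. The only point needing care is justifying the pointwise identity in Step 1 for $\psi\in\dot H^1\cap L^4$ rather than $H^1$. Each term on the right of that identity is integrable, so the left-hand integrand is integrable and the same integral identity holds. No density argument is needed, since the cancellation is pointwise.
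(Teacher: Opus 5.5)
Your proposal is correct and follows the paper's argument: rewrite $K^Q$ in terms of $\psi=e^{icx/2}\varphi$ so that the critical condition $4\omega=-c^2$ removes the $L^2$ term, then read off $K^Q(\lambda\varphi)=\lambda^2|\psi|_{\dot H^1}^2+\lambda^4A\|\psi\|_{L^4}^4\to0$. Your remark that the cancellation is pointwise, so that $K^Q$ makes sense on $X_c$ even though $\psi$ need not lie in $L^2$, is a justification the paper leaves implicit.
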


The next lemma demonstrates the behavior of $K$ about the origin of $X_c$.

\begin{lemma}
Let $\left\{\varphi_n\right\} $ be a bounded sequence in $X_c \backslash\{0\}$ such that
\begin{equation}\label{K Q vanishes}
\lim _{n \rightarrow \infty} K^Q\left(\varphi_n\right)=0.
\end{equation}
Then
$
K\left(\varphi_n\right)>0
$
for sufficiently large $n$.
\end{lemma}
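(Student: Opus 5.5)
We mirror the proof of Lemma \ref{lem:K-positive-eventually}, but in the critical setting the quadratic part $K^Q$ now contains the quartic term. Writing $\varphi_n = e^{-icx/2}\psi_n$ with $\psi_n\in\dot H^1\cap L^4$, the computation preceding the statement gives
$$
K^Q(\varphi_n)=|\psi_n|_{\dot H^1}^2 + A\|\psi_n\|_{L^4}^4 .
$$
Since $A>0$, both terms are nonnegative. Hence \eqref{K Q vanishes} yields separately
$$
|\psi_n|_{\dot H^1}\to 0 \qquad\text{and}\qquad \|\psi_n\|_{L^4}\to 0,
$$
and $K^Q(\varphi_n)>0$ for every $n$, because $\varphi_n\neq 0$. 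The goal is to show $K^N(\varphi_n)=o(K^Q(\varphi_n))$. Then $K(\varphi_n)=K^Q(\varphi_n)-K^N(\varphi_n)>0$ for large $n$.

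Both nonlinear terms must be bounded using only $\dot H^1$ and $L^4$ norms, since no $L^2$ control is available. For the sextic term, the one-dimensional Gagliardo--Nirenberg inequality adapted to $L^4$ reads
$$
\|\psi\|_{L^\infty}^3\lesssim \|\psi\|_{L^4}^2|\psi|_{\dot H^1}.
$$
It follows from $\partial_x(|\psi|^3)=3|\psi|\psi'$-type arguments, i.e. $|\psi(x)|^3\le 3\int|\psi|^2|\psi'|$ and Cauchy--Schwarz. This gives
$$
\|\psi\|_{L^6}^6\le \|\psi\|_{L^\infty}^2\|\psi\|_{L^4}^4\lesssim \|\psi\|_{L^4}^{16/3}|\psi|_{\dot H^1}^{2/3}.
$$

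For the nonlocal term, we use
$$
\int |\psi|^2(\mathcal H|\psi|^2)' = \big||\psi|^2\big|_{\dot H^{1/2}}^2\le \big\||\psi|^2\big\|_{L^2}\,\big|\,|\psi|^2\big|_{\dot H^1}.
$$
The last step is interpolation in Fourier space via Cauchy--Schwarz. Combining this with $\big|\,|\psi|^2\big|_{\dot H^1}\le 2\|\psi\psi'\|_{L^2}\le 2\|\psi\|_{L^\infty}|\psi|_{\dot H^1}$ yields
$$
\lesssim \|\psi\|_{L^4}^2\,\|\psi\|_{L^4}^{2/3}|\psi|_{\dot H^1}^{1/3}\,|\psi|_{\dot H^1}
=\|\psi\|_{L^4}^{8/3}|\psi|_{\dot H^1}^{4/3}.
$$

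It remains to compare these with $K^Q\sim a^2+b^4$, where $a=|\psi_n|_{\dot H^1}$ and $b=\|\psi_n\|_{L^4}$. By Young's inequality with exponents chosen to match the homogeneities, $b^{8/3}a^{4/3}=(b^4)^{2/3}(a^2)^{2/3}\le (a^2+b^4)^{4/3}$. Similarly, $b^{16/3}a^{2/3}=(b^4)^{4/3}(a^2)^{1/3}\le(a^2+b^4)^{5/3}$. Since $a^2+b^4=K^Q(\varphi_n)/\min(1,A)\to 0$, both bounds are $o(K^Q(\varphi_n))$. This gives $|K^N(\varphi_n)|=o(K^Q(\varphi_n))$ and the conclusion. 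Boundedness of $\{\varphi_n\}$ in $X_c$ is not really needed beyond ensuring the quantities are finite.

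The main obstacle is obtaining the nonlocal estimate without any $L^2$ norm. The first attempt will be the chain $\dot H^{1/2}\le L^2\cdot\dot H^1$ applied to $|\psi|^2$ together with the $L^4$-based $L^\infty$ bound above, checking that the resulting total homogeneity in $(a^2,b^4)$ exceeds $1$.
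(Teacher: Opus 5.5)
Your proof is correct and follows the paper's route. You write $K^Q(\varphi_n)=|\psi_n|_{\dot H^1}^2+A\|\psi_n\|_{L^4}^4$, bound the two parts of $K^N$ by $\|\psi_n\|_{L^4}^{16/3}|\psi_n|_{\dot H^1}^{2/3}$ and $\|\psi_n\|_{L^4}^{8/3}|\psi_n|_{\dot H^1}^{4/3}$, and show these are $o(K^Q(\varphi_n))$. You obtain the nonlocal bound by Fourier interpolation of $|\psi|^2$ between $L^2$ and $\dot H^1$, where the paper uses Kato--Ponce plus a fractional Gagliardo--Nirenberg inequality.

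Your homogeneity comparison with $(a^2+b^4)^{4/3}$ and $(a^2+b^4)^{5/3}$ is actually more careful than the paper's Young step. The paper's bound keeps a term $\|\psi_n\|_{L^4}^4$, which is only $O(K^Q)$, not $o(K^Q)$. The one thing to fix is that $a^2+b^4$ does not equal $K^Q/\min(1,A)$; use the two-sided bound $\min(1,A)(a^2+b^4)\le K^Q(\varphi_n)\le\max(1,A)(a^2+b^4)$ instead.
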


\begin{proof}
Let $\left\{\varphi_n\right\} $ be a bounded sequence in $X_c \backslash\{0\}$ such that \eqref{K Q vanishes}. We estimate
$$
\begin{aligned}
K^Q\left(\varphi_n\right)
& = \int_{\mathbb{R}} \left( \left|\varphi_n^{\prime}\right|^2 + c \operatorname{Im}\left(\bar{\varphi}_n \varphi_n^{\prime}\right) - \omega \left|\varphi_n\right|^2 +  A |\varphi_n|^4 \right)dx \\
& = \int_{\mathbb{R}} \left( \left|\left(e^{icx/2} \varphi_n\right)^\prime\right|^2 +  A |\varphi_n|^4 \right)dx \\
& = \int_{\mathbb{R}} \left( \left| \psi_n^{\prime}\right|^2 +  A |\psi_n|^4 \right)dx\lesssim\left|\psi_n\right|_{\dot{H}^1}^2+\left\|\psi_n\right\|_{L^4}^{4}.
\end{aligned}
$$
Then by the Gagliardo-Nirenberg, Young inequalities and \eqref{K Q vanishes}, 
$$
\begin{aligned}
|K^N\left(\varphi_n\right)|&=\left| \int_{\mathbb{R}}\left(-B|\varphi|^6-\gamma|\varphi|^2\left(\mathcal{H}\left(|\varphi|^2\right)\right)^{\prime}\right)dx\right| \\
&\lesssim\left|\psi_n\right|_{\dot{H}^1}^{2 / 3}\left\|\psi_n\right\|_{L^4}^{16 / 3}+\left|\psi_n\right|_{\dot{H}^1}^{4 / 3}\|\psi_n\|_{L^4}^{8 / 3} \\
&\lesssim\left|\psi_n\right|_{\dot{H}^1}^4+\left\|\psi_n\right\|_{L^4}^{4}+\left\|\psi_n\right\|_{L^4}^{32 / 5}=\mathrm{o}\left(K^Q\left(\varphi_n\right)\right) 
\end{aligned}
$$
for sufficiently large $n$. As a result,
$$
K\left(\varphi_n\right)=K^Q\left(\varphi_n\right)-K^N\left(\varphi_n\right) \approx K^Q\left(\varphi_n\right)>0
$$
for sufficiently large $n$. This completes the proof.
\end{proof}

Now we replace the functional $E$, which is unbounded from below, by a new positive functional $W$. At the same time, we enlarge the constraint set from the level surface $K=0$ (the ``mountain ridge'') to the sublevel set $K \leq 0$ (the ``mountain flank"). Let 
\begin{equation*}
\begin{aligned}
W(\varphi) & :=E(\varphi)-\frac{1}{6} K(\varphi) \\
& =\frac{1}{3}\int_{\mathbb{R}}\left(\left| \varphi^{\prime}\right|^2-\omega|\varphi|^2+c \operatorname{Im}\left(\bar{\varphi} \varphi^{\prime}\right)+\frac{ A}{4}|\varphi|^4 +\frac{\gamma}{4} |\varphi|^2\left(\mathcal{H}\left(|\varphi|^2\right)\right)^{\prime}\right)dx\\
&=\frac{1}{3}\int_{\mathbb{R}}\left(\left| (e^{icx/2} \varphi)^\prime\right|^2+\frac{ A}{4}|\varphi|^4 +\frac{\gamma}{4} |\varphi|^2\left(\mathcal{H}\left(|\varphi|^2\right)\right)^{\prime}\right)dx.
\end{aligned}
\end{equation*}
Therefore,
\begin{equation*}
W\left(\lambda_1 \varphi\right)<W\left(\lambda_2 \varphi\right)
\end{equation*}
holds for all $\varphi \in X_c \backslash\{0\}$ and $0<\lambda_1<\lambda_2$. Then we can replace \eqref{E with K(varphi)=0 crit.} with 
\begin{equation}\label{E with K<=0 critical}
\widetilde{I}_{0} :=\inf \left\{W(\varphi):\ \varphi \in X_c \backslash\{0\} \quad \text{and}\quad  K(\varphi) \leq 0\right\}.
\end{equation}
This allows us to carry out the minimization over a broader admissible set while retaining equivalence to the original problem \eqref{E with K(varphi)=0 crit.}, as shown in Lemma \ref{equiv. lemma critic.}.

\begin{lemma}\label{equiv. lemma critic.}
The minimization problems \eqref{E with K(varphi)=0 crit.} and \eqref{E with K<=0 critical} are equivalent in the following sense:
\begin{enumerate}[label=(\roman*)]
    \item $I_{0} = \widetilde{I}_{0} > 0$;
    \item minimizers of one problem are minimizers of the other.
\end{enumerate}
\end{lemma}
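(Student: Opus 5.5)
We follow the proof of Lemma \ref{equiv. lemma} essentially verbatim, now using the two lemmas of this section (the limit $K^Q(\lambda\varphi)\to 0$ as $\lambda\to0^+$ and the positivity of $K$ near the origin of $X_c$) and the strict monotonicity of the critical $W=E-\frac16K$ along rays. First, if $\varphi\in X_c\setminus\{0\}$ satisfies $K(\varphi)=0$, then $W(\varphi)=E(\varphi)$. Since $\{K=0\}\subset\{K\le0\}$, this gives $I_0\ge \widetilde I_0$.

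For the reverse inequality, take $\varphi\in X_c\setminus\{0\}$ with $K(\varphi)<0$ and consider $\lambda\mapsto K(\lambda\varphi)$ on $(0,1]$. This map is continuous, since it is a polynomial in $\lambda$ (terms in $\lambda^2,\lambda^4,\lambda^6$), and $K(1\cdot\varphi)<0$. The family $\{\lambda\varphi\}_{\lambda\in(0,1]}$ is bounded in $X_c$, and $K^Q(\lambda\varphi)\to0$ as $\lambda\to0^+$. Applying the second lemma along any sequence $\lambda_n\to0^+$ gives $K(\lambda\varphi)>0$ for small $\lambda$. By the intermediate value theorem there is $\lambda_0\in(0,1)$ with $K(\lambda_0\varphi)=0$. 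Monotonicity then gives $E(\lambda_0\varphi)=W(\lambda_0\varphi)<W(\varphi)$, hence $I_0\le\widetilde I_0$.

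Positivity $\widetilde I_0>0$ is the step needing genuine care; in the subcritical lemma it was essentially asserted. Here
\[
W(\varphi)=\tfrac13\Big(|\psi|_{\dot H^1}^2+\tfrac A4\|\psi\|_{L^4}^4+\tfrac\gamma4|\,|\psi|^2|_{\dot H^{1/2}}^2\Big),
\]
which, since $A>0$ and $\gamma\ge0$, is bounded below by $c_0(|\psi|_{\dot H^1}^2+\|\psi\|_{L^4}^4)\gtrsim K^Q(\varphi)$. The constraint $K\le0$ means $K^Q(\varphi)\le K^N(\varphi)$. The estimate in the proof of the preceding lemma shows $|K^N(\varphi)|\lesssim |\psi|_{\dot H^1}^{2/3}\|\psi\|_{L^4}^{16/3}+|\psi|_{\dot H^1}^{4/3}\|\psi\|_{L^4}^{8/3}$, which is superquadratic in the sense that it is $o(|\psi|_{\dot H^1}^2+\|\psi\|_{L^4}^4)$ as this quantity tends to $0$. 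I expect this to be the main obstacle, because the two terms of $K^Q$ scale differently (quadratically and quartically); I would handle it by arguing by contradiction. Suppose $\varphi_n$ is admissible with $W(\varphi_n)\to0$. Then $\varphi_n$ is bounded in $X_c$ and $K^Q(\varphi_n)\to0$, so the preceding lemma yields $K(\varphi_n)>0$ for large $n$, contradicting $K(\varphi_n)\le0$. Hence $\widetilde I_0>0$.

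For (ii), let $\varphi$ minimize $\widetilde I_0$ and suppose $K(\varphi)<0$. Choosing $\lambda_0\in(0,1)$ as above gives
\[
\widetilde I_0=W(\varphi)>W(\lambda_0\varphi)=E(\lambda_0\varphi)\ge I_0=\widetilde I_0,
\]
a contradiction. So $K(\varphi)=0$, and then $E(\varphi)=W(\varphi)=I_0$, i.e. $\varphi$ minimizes $I_0$. Conversely, a minimizer of $I_0$ satisfies $\widetilde I_0\le W(\varphi)=E(\varphi)=I_0=\widetilde I_0$, so it minimizes $\widetilde I_0$.
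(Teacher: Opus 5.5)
Your proof is correct and follows the paper's route; the paper's proof says only that the argument of Lemma~\ref{equiv. lemma} carries over. That argument is: inclusion gives $I_0\ge\widetilde I_0$, scaling $\lambda\varphi$ down to the Nehari set (via the two preceding lemmas and strict monotonicity of $W$ along rays) gives the reverse inequality, and the same contradiction yields (ii). Your argument for $\widetilde I_0>0$ uses $W\gtrsim |\psi|_{\dot H^1}^2+\|\psi\|_{L^4}^4\gtrsim K^Q$ (valid because $A>0$, $\gamma\ge0$) together with the positivity of $K$ near the origin, and it proves a step the paper asserts without proof.
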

\begin{proof}
The same argument as in Lemma \ref{equiv. lemma} applies.
\end{proof}

Note that, in Theorem \ref{main thm critical}, we have $\psi \in \dot{H}^1(\mathbb{R}; \mathbb{C})\cap L^4(\mathbb{R}; \mathbb{C})$. Therefore, for each fixed $t \in \mathbb{R}$, the function $u(\cdot,t)$ defined by \eqref{all transf. crit.} satisfies
$$
u(\cdot, t) \in \begin{cases}\dot{H}^1(\mathbb{R}; \mathbb{C})\cap L^4(\mathbb{R}; \mathbb{C}), \quad& c=0, \\ H_{\mathrm{loc}}^1 (\mathbb{R}; \mathbb{C})\cap L^4(\mathbb{R}; \mathbb{C}), & c \neq 0,\end{cases}
$$
and solves \eqref{nonlocal DNLS}.

\begin{theorem}\label{main thm critical}
The minimization problem \eqref{E with K(varphi)=0 crit.} admits a minimizer
$\varphi\in X_c$ such that, upon writing
$$
\varphi(x)=e^{icx/2}\psi(x),
$$
the function
$$
\psi\in \dot H^1(\mathbb{R};\mathbb{R})\cap L^4(\mathbb{R};\mathbb{R})
$$
is nonnegative, even, and nonincreasing on $[0,\infty)$. Moreover,
$$
\|\varphi\|_{X_c}
=
\|\psi\|_{\dot H^1}
+
\|\psi\|_{L^4},
$$
and $\psi$ is a weak solution of
\begin{equation}\label{ODE 4}
-\psi''
+A|\psi|^2\psi
+B|\psi|^4\psi
+\gamma\psi\bigl(\mathcal{H}(|\psi|^2)\bigr)'
=0.
\end{equation}
\end{theorem}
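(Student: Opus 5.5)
We mimic the proof of Theorem \ref{subcritical: B<0 and gamma>=0}, now working in $X_c$ and with $W=E-\frac16K$. By Lemma \ref{equiv. lemma critic.} it suffices to produce a minimizer of \eqref{E with K<=0 critical}.

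\textbf{Step 1: reduction to symmetric profiles.} Let $\{\varphi_n\}$ be a minimizing sequence for \eqref{E with K<=0 critical} and write $\varphi_n=e^{-icx/2}\psi_n$ with $\psi_n\in\dot H^1\cap L^4$. In terms of $\psi_n$,
$$
W(\varphi_n)=\frac13\int\Big(|\psi_n'|^2+\frac A4|\psi_n|^4+\frac\gamma4|\psi_n|^2(\mathcal H|\psi_n|^2)'\Big)dx .
$$
Since $A>0$ and $\gamma\ge0$, every term is nonnegative, because the last one equals $\frac\gamma4\,\big||\psi_n|^2\big|^2_{\dot H^{1/2}}$. Hence $\psi_n$ is bounded in $\dot H^1\cap L^4$, i.e. $\{\varphi_n\}$ is bounded in $X_c$.

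Next replace $\psi_n$ by $|\psi_n|$ and then by its Schwarz symmetrization $\psi_n^*$. This does not increase $\int|\psi'|^2$, leaves the $L^4$ and $L^6$ norms unchanged, and does not increase $\big||\psi|^2\big|_{\dot H^{1/2}}$, by the P\'olya--Szeg\H{o} type inequality for the fractional seminorm (Riesz rearrangement in the Gagliardo form of $|D|^{1/2}$). Since $\gamma\ge0$, $W$ and $K$ therefore do not increase, and we may assume each $\psi_n$ is nonnegative, even and nonincreasing on $[0,\infty)$.

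\textbf{Step 2: compactness.} For even nonincreasing $\psi_n$ we have the pointwise bound
$$
\psi_n(x)^4\le \frac{\|\psi_n\|_{L^4}^4}{2|x|},
$$
which gives uniform decay. Together with local $H^1$ bounds (from $L^4\subset L^2_{\rm loc}$ and the $\dot H^1$ bound), this yields a subsequence with $\psi_n\rightharpoonup\psi$ weakly in $\dot H^1$ and in $L^4$, and locally uniformly. Strong convergence in $L^6$ follows by splitting at $|x|=R$: on $|x|>R$ the tail is bounded by
$$
\|\psi_n\|_{L^\infty(|x|>R)}^2\,\|\psi_n\|_{L^4}^4\lesssim R^{-1/2}.
$$
Strong $L^4$ convergence may fail, because the tails of $\psi_n$ are only of size $|x|^{-1/4}$. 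This causes no trouble, since $A>0$ and $\gamma\ge0$ make the relevant terms weakly lower semicontinuous: $\|\psi\|_{L^4}^4\le\liminf\|\psi_n\|_{L^4}^4$ and $\big|\psi^2\big|_{\dot H^{1/2}}\le\liminf\big|\psi_n^2\big|_{\dot H^{1/2}}$ (using $\psi_n^2\to\psi^2$ in $\mathcal D'$ and boundedness in $\dot H^{1/2}$). The only term of $K$ with a bad sign, $B\int|\psi|^6$, converges strongly. Therefore $W(\varphi)\le \widetilde I_0$ and $K(\varphi)\le\liminf K(\varphi_n)\le0$.

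\textbf{Step 3: nontriviality.} This is the main obstacle. If $\psi=0$, then $\int\psi_n^6\to0$. Since $K(\varphi_n)\le0$,
$$
K^Q(\varphi_n)\le K^N(\varphi_n)=-B\int\psi_n^6-\gamma\,\big|\psi_n^2\big|^2_{\dot H^{1/2}}\le -B\int\psi_n^6\to0 .
$$
So $K^Q(\varphi_n)\to0$ with $\{\varphi_n\}$ bounded, and the preceding lemma gives $K(\varphi_n)>0$ for large $n$, a contradiction. Hence $\psi\neq0$, and $\varphi$ minimizes \eqref{E with K<=0 critical}, hence also \eqref{E with K(varphi)=0 crit.} by Lemma \ref{equiv. lemma critic.}.

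\textbf{Step 4: Euler--Lagrange equation.} By the Lagrange multiplier rule there is $\mu$ with $E'(\varphi)=\mu K'(\varphi)$. Pairing with $\varphi$ and using $K(\varphi)=0$ gives
$$
0=\mu\langle K'(\varphi),\varphi\rangle=\mu\Big(-2|\psi|_{\dot H^1}^2-\ldots\Big)
$$
where the bracket reduces to $-2\int|\psi'|^2+2B\int\psi^6+(\text{terms of the same sign})$. Concretely, $\langle K'(\varphi),\varphi\rangle-4K(\varphi)$ combines, with the constraint, into a strictly negative quantity of the form $-2\int|\psi'|^2+2B\int\psi^6<0$. Hence $\mu=0$ and $E'(\varphi)=0$, which means $\psi$ is a weak solution of \eqref{ODE 4}.
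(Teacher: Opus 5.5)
Your proposal is correct and follows the paper's route. The paper's proof just says to repeat the subcritical argument: minimize $W$ over $K\le 0$, symmetrize, pass to a weak limit, and rule out $\psi=0$ via the lemma that $K^Q(\varphi_n)\to0$ forces $K(\varphi_n)>0$. It then shows the multiplier vanishes because, once $K(\varphi)=0$, one has exactly $\langle K'(\varphi),\varphi\rangle=-2|\psi|_{\dot H^1}^2+2B\|\psi\|_{L^6}^6<0$.

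You are more careful than the paper in noting that the strong $L^4$ compactness used in the subcritical case fails in $\dot H^1\cap L^4$. Your substitute is what makes that adaptation rigorous: weak lower semicontinuity of the $A$- and $\gamma$-terms (both with nonnegative coefficients), strong $L^6$ convergence from the decay bound $\psi_n(x)^4\le\|\psi_n\|_{L^4}^4/(2|x|)$, and $\gamma\ge0$ to discard the nonlocal term in $K^N$.
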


\begin{proof}
One can show that \eqref{E with K(varphi)=0 crit.} admits at least one  minimizer, which is nonnegative, radially symmetric, and nonincreasing about the origin of $\mathbb{R}$,
by following the proof of Theorem \ref{subcritical: B<0 and gamma>=0}.

Since $ \varphi \in X_c $ is a minimizer for \eqref{E with K(varphi)=0 crit.}, it follows from the Lagrange multiplier theorem that there exists  $ \mu \in \mathbb{R} $ such that
$$
\left\langle E'(\varphi), \nu \right\rangle = \mu \left\langle K'(\varphi), \nu \right\rangle \quad \text{for all } \nu \in X_c.
$$
In particular, 
$$
\left\langle E'(\varphi), \varphi \right\rangle = \mu \left\langle K'(\varphi), \varphi \right\rangle.
$$
Then
$$
\begin{aligned}
0 =K(\varphi) & =\int_{\mathbb{R}}\bigg(\left| \varphi^{\prime}\right|^2+ A|\varphi|^4+B|\varphi|^6+ \gamma |\varphi|^2\left(\mathcal{H}\left(|\varphi|^2\right)\right)^{\prime}\bigg.\\
&-\omega|\varphi|^2+c \operatorname{Im}\left(\bar{\varphi} \varphi^{\prime}\right)\bigg)dx \\
&= \mu\int_{\mathbb{R}}\bigg(2\left| \varphi^{\prime}\right|^2+4 A|\varphi|^4+6B|\varphi|^6 + 4\gamma |\varphi|^2\left(\mathcal{H}\left(|\varphi|^2\right)\right)^\prime\bigg.\bigg.\\
&-2\omega|\varphi|^2+2c \operatorname{Im}\left(\bar{\varphi} \varphi^{\prime}\right)\bigg)dx \\
&=4\mu K(\varphi)-2 \mu \int_{\mathbb{R}}\left(\left| \varphi^{\prime}\right|^2-\omega|\varphi|^2+c \operatorname{Im}\left(\bar{\varphi} \varphi^{\prime}\right)\right)dx+2B\mu\int_{\mathbb{R}}  |\varphi|^6dx \\
&=-2 \mu|\varphi|_{\dot{H}_1}^2+2B \mu \|\varphi\|_{L^6}^6,
\end{aligned}
$$
which implies  $ \mu = 0 $. Consequently,
$$
E'(\varphi) = 0 \quad \text{in } X_c^*.
$$
Since $ \varphi(x) = e^{-icx/2} \psi(x) $, it follows that $ \psi $ solves  \eqref{ODE 4}
in  $\dot{H}^1(\mathbb{R}; \mathbb{C})\cap L^4(\mathbb{R}; \mathbb{C})$.
\end{proof}

\section{Subcritical Minimization under a Local--Nonlocal Quartic
Constraint}\label{section 5}

Let $\omega, c \in \mathbb{R}$ satisfy $\omega + c^2/4<0$. Also let $B=0$. We define the action functional
\begin{equation*}
\begin{aligned}
E(\varphi) :=& \int_{\mathbb{R}} \bigg(
 \frac{1}{2} \left| \varphi^{\prime}\right|^2
 - \frac{\omega}{2} |\varphi|^2
+ \frac{c}{2}   \operatorname{Im}\left(\bar{\varphi}   \varphi^{\prime}\right)
\bigg)  dx\\
=&\int_{\mathbb{R}} \Big(\frac{1}{2}|\psi^\prime|^2- \frac{1}{2}\left(\omega + \frac{c^2}{4}\right)|\psi|^2\Big)dx
\end{aligned}
\end{equation*}
and constraint
\begin{equation*}\label{eq:Q}
\begin{aligned}
Q(\varphi):=&\frac{1}{4}\int_{\mathbb{R}}\left( \alpha_1^2\left(|D|^{1/2}|\varphi|^2\right)^2 + \alpha_2^2|\varphi|^4\right)dx\\
=&\frac{1}{4}\int_{\mathbb{R}}\left( \alpha_1^2\left(|D|^{1/2}|\psi|^2\right)^2 + \alpha_2^2|\psi|^4\right)dx,
\end{aligned}
\end{equation*}
provided $\alpha_1^2+\alpha_2^2\neq 0$. 

In this section, we consider 
\begin{equation}\label{eq:Iq-def}
\begin{aligned}
I_q:&=\inf\{ E(\varphi):\ \varphi\in \widetilde{H}_c\quad \text{and}\quad Q(\varphi)=q \}\\
&=\inf\{ E(\psi):\ \psi\in H^1(\mathbb{R};\mathbb{C})\quad \text{and}\quad Q(\psi)=q \}.
\end{aligned}
\end{equation}
Let $\psi_n$ be a minimizing sequence for \eqref{eq:Iq-def}. It is apparent to see that
$$
\rho_n:=|\psi_n'|^2+|\psi_n|^2
$$
is bounded in $L^1(\mathbb{R};\mathbb{R})$. Then
$$
L:=\lim_{n\to\infty}\int_{\mathbb{R}}\rho_n dx,
$$
up to a subsequence. By normalizing $\rho_n$ and relabeling the resulting sequence, we may further assume that
$$
\int_{\mathbb{R}} \rho_n(x) dx = L 
\qquad \text{for all } n\in\mathbb{N}.
$$

We first present the existence theorem for the minimization problem \eqref{eq:Iq-def}. Its proof follows the concentration--compactness scheme: once the vanishing and dichotomy cases are ruled out, the compactness alternative yields a minimizer. We then obtain the associated Euler--Lagrange equation by using a Lagrange multiplier argument. The auxiliary results needed to exclude vanishing and dichotomy, including the scaling law and subadditivity properties, are established afterwards.  Note that $\psi\in H^1(\mathbb{R};\mathbb{C})$ in Theorem \ref{thm: existence}. Therefore, for each fixed $t\in\mathbb{R}$
the function $u(\cdot,t)$ defined by \eqref{all transf. subcr.} belongs to
$H^1(\mathbb{R};\mathbb{C})$ and solves \eqref{nonlocal DNLS}.

\begin{theorem}\label{thm: existence}
Let $\{\psi_n\}$ be a minimizing sequence for  \eqref{eq:Iq-def}, which can be taken  nonnegative. Then there exist   $\psi \in H^1(\mathbb{R};\mathbb{C})$  and $\{y_n\} \subset \mathbb{R}$  such that
$$
\psi_n( \cdot + y_n ) \to \psi
\quad \text{strongly in } H^1(\mathbb{R};\mathbb{C})
$$
up to a subsequence. Moreover, $\psi$ is a minimizer for \eqref{eq:Iq-def}. Hence, $\psi$ solves
\begin{equation}\label{ODE 5}
-\psi'' - \Bigl(\omega + \frac{c^2}{4}\Bigr)\psi+ A|\psi|^2\psi+\gamma\psi \left(\mathcal{H}(|\psi|^2)\right)^\prime=0
\quad \text{in }H^1(\mathbb{R};\mathbb{C}),
\end{equation}
where 
$\gamma=-\alpha_1^2\mu$, $ A=-\alpha_2^2\mu$ and  $\mu=\mu(\omega,c,q)>0$ is the Lagrange multiplier.
\end{theorem}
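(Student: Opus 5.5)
We run the concentration--compactness scheme of Lemma \ref{lem:CC} on $\rho_n=|\psi_n'|^2+|\psi_n|^2$. First we record the elementary facts. Since $-(\omega+c^2/4)>0$, the functional $E(\psi)$ is equivalent to $\tfrac12\|\psi\|_{H^1}^2$, so minimizing sequences are bounded in $H^1$ and $I_q>0$. By Gagliardo--Nirenberg, together with $\||D|^{1/2}|\psi|^2\|_{L^2}^2\lesssim \||\psi|^2\|_{L^2}\,\|(|\psi|^2)'\|_{L^2}\lesssim \|\psi\|_{L^4}^2\|\psi\|_{L^\infty}\|\psi'\|_{L^2}$, the constraint satisfies $Q(\psi)\lesssim \|\psi\|_{H^1}^4$. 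Replacing $\psi_n$ by $|\psi_n|$ does not increase $E$. The nonlocal part of $Q$ depends only on $|\psi|^2$, so $Q$ is unchanged; hence nonnegative minimizing sequences are allowed.

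\textbf{Scaling and subadditivity.} Since $E$ is quadratic and $Q$ is quartic, the map $\psi\mapsto\theta^{1/4}\psi$ sends $\{Q=q\}$ onto $\{Q=\theta q\}$. This gives the scaling law $I_{\theta q}=\theta^{1/2}I_q$, i.e.\ $I_q=q^{1/2}I_1$. Because $s\mapsto s^{1/2}$ is strictly concave, we obtain the strict subadditivity
\[
I_q<I_{q_1}+I_{q-q_1}\qquad\text{for all } q_1\in(0,q).
\]

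\textbf{Excluding vanishing and dichotomy.} For \emph{vanishing}, Lions' lemma for bounded $H^1$ sequences gives $\psi_n\to0$ in $L^p$ for all $2<p<\infty$. The local part of $Q$ then tends to zero. For the nonlocal part, we bound $\||D|^{1/2}|\psi_n|^2\|_{L^2}^2\le \||\psi_n|^2\|_{L^2}\,\|(|\psi_n|^2)'\|_{L^2}\lesssim \|\psi_n\|_{L^4}^2\|\psi_n\|_{L^\infty}\|\psi_n'\|_{L^2}\to0$. Hence $Q(\psi_n)\to0\neq q$, a contradiction.

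For \emph{dichotomy}, we split $\psi_n=\psi_{n,1}+\psi_{n,2}+r_n$ using smooth cutoffs at scales $R$ and $R_n$, with $E(\psi_n)\ge E(\psi_{n,1})+E(\psi_{n,2})-\varepsilon$ and $Q(\psi_{n,1})\to q_1$, $Q(\psi_{n,2})\to q-q_1$. The local quartic term splits easily. The main obstacle is the nonlocal term $\||D|^{1/2}|\psi|^2\|_{L^2}^2$, because the operator is nonlocal. We plan to use the Gagliardo (double-integral) representation
\[
\||D|^{1/2}f\|_{L^2}^2=C\iint\frac{|f(x)-f(y)|^2}{|x-y|^2}\,dx\,dy,
\]
with $f=|\psi_n|^2$. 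Write $f=f_1+f_2+f_r$, where $f_1,f_2$ are supported at distance at least $R_n-R\to\infty$ apart and $f_r$ is small in $H^1$ (of size $O(\varepsilon)$). The cross term $\iint f_1(x)f_2(y)|x-y|^{-2}\,dx\,dy$ is bounded by $(R_n-R)^{-2}\|f_1\|_{L^1}\|f_2\|_{L^1}\to0$. The terms involving $f_r$ are $O(\varepsilon)$ by Cauchy--Schwarz in $\dot H^{1/2}$.

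If $q_1$ degenerates to $0$ or $q$, the small piece carries a vanishing constraint but a nontrivial share of $E$; since $I_q>0$ and $E$ controls the $H^1$ norm, this still contradicts minimality. Otherwise, rescaling the pieces onto exact constraint levels and letting $\varepsilon\to0$ gives $I_q\ge I_{q_1}+I_{q-q_1}$, which contradicts the strict subadditivity.

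\textbf{Compactness and the Euler--Lagrange equation.} In the remaining compactness case, $\psi_n(\cdot+y_n)$ is tight. It converges weakly in $H^1$ and strongly in $L^2$, hence in $L^p$ for $2\le p<\infty$, to some $\psi$. The same $\dot H^{1/2}$ interpolation as above shows $Q(\psi)=q$, while weak lower semicontinuity gives $E(\psi)\le I_q$. Therefore $\psi$ is a minimizer. Moreover $E(\psi_n)\to E(\psi)$, which upgrades weak to strong $H^1$ convergence, since $E$ is an equivalent squared norm.

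The Lagrange multiplier rule gives $E'(\psi)=\mu Q'(\psi)$. Here $Q'(\psi)=\alpha_1^2\psi(\mathcal H|\psi|^2)'+\alpha_2^2|\psi|^2\psi$, using $\int f|D|f=\||D|^{1/2}f\|^2$ with $f=|\psi|^2$. This is exactly \eqref{ODE 5} with $\gamma=-\alpha_1^2\mu$ and $A=-\alpha_2^2\mu$. Finally, pairing with $\psi$ gives $2E(\psi)=4\mu q$, so $\mu=I_q/(2q)>0$.
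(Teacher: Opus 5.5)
Your proof is correct. It uses the same concentration--compactness skeleton as the paper: the same $\rho_n$, the scaling law $I_q=q^{1/2}I_1$, strict subadditivity, exclusion of vanishing and dichotomy, and the multiplier $\mu=E(\psi)/(2Q(\psi))>0$. Where you differ is in how you handle the nonlocal term, and your device is simpler.

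The paper proves Lemma~\ref{vanishing lemma} by covering $\mathbb{R}$ with intervals and splitting $|D|^{1/2}(|\psi_n|^2)$ into localized, annulus and far pieces. It proves Lemma~\ref{lem:split-global} through a local/tail split with Kato--Ponce estimates. You replace both arguments with the single bound
$\||D|^{1/2}f\|_{L^2}^2\le\|f\|_{L^2}\|f'\|_{L^2}$ for $f=|\psi|^2$. Combined with $\|(|\psi|^2)'\|_{L^2}\le 2\|\psi\|_{L^\infty}\|\psi'\|_{L^2}$, this reduces both the vanishing case and the convergence $Q(\psi_n(\cdot+y_n))\to Q(\psi)$ to $L^4$ convergence plus an $H^1$ bound. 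This is in fact what the paper itself does for \eqref{nonlocal conv.} in Section~\ref{section 6}.

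In the dichotomy step, decomposing at the level of $f=|\psi_n|^2$ is also cleaner. The remainder $f_r=(1-\xi_1^2-\xi_2^2)|\psi_n|^2$ is automatically supported in the annulus. It is $O(\varepsilon^{1/2})$ in $H^1$, not $O(\varepsilon)$ as you wrote, but that is harmless. The Gagliardo form then turns the interaction of the separated pieces into the explicit bound $d_n^{-2}\|f_1\|_{L^1}\|f_2\|_{L^1}$, where $d_n\to\infty$ is the distance between the supports. This avoids the paper's Kato--Ponce bookkeeping for $r_n$.

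Your argument also improves on the paper's proof in three places:
\begin{enumerate}
\item You treat both degenerate endpoints $q_1\in\{0,q\}$. The paper's case list omits $\lambda_1=1$, the mirror image of its Case~2.
\item You implicitly use that $Q\ge 0$ forces the piece constraints into $[0,q]$. In particular, the paper's Case~3 cannot occur.
\item You observe that $E(\psi_n(\cdot+y_n))\to E(\psi)=I_q$, and $E$ is an equivalent squared $H^1$ norm. This is what actually delivers the strong $H^1$ convergence asserted in the theorem. The paper's proof only establishes convergence in $L^2$ and $L^4$ together with minimality.
\end{enumerate}

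The paper's interval-by-interval estimates do tie the nonlocal term directly to the concentration function. That template would adapt to nonlocal functionals lacking a global $L^2$--$\dot H^1$ interpolation bound. Here, though, your route is shorter and complete.

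The only points you leave implicit are routine:
\begin{itemize}
\item the nondegeneracy $\langle Q'(\psi),\psi\rangle=4q\neq 0$ needed for the multiplier rule;
\item the diagonal choice $\varepsilon_j\to 0$ when passing to the limit in the constraint values of the pieces.
\end{itemize}
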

\begin{proof}
By Lemmas \ref{vanishing lemma} and \ref{dichotomy lemma}, the only possible alternative is the compactness. Let us show that  the compactness leads to the existence of a minimizer. By a change of variables,
$$
\int_{\left|x-y_n\right| \leq R(\varepsilon)} \rho_ndx \geq \int_{\mathbb{R}} \rho_ndx-\varepsilon.
$$
Therefore,
\begin{equation}\label{tail smallness}
\int_{|x-y_n|>R(\varepsilon)} \rho_n dx \leq \varepsilon \quad \text { for all } n.
\end{equation}
Since $\{\psi_n\}$ is bounded in $H^1(\mathbb{R};\mathbb{C})$,   there exists $\psi \in H^1(\mathbb{R};\mathbb{C})$ such that 
\begin{equation}\label{weak conv. of transl.}
\psi_n\left(\cdot+y_n\right) \rightharpoonup \psi \quad\text { in } H^1(\mathbb{R};\mathbb{C}) 
\end{equation}
and
\begin{equation}\label{local L2 conv}
\psi_n\left(\cdot+y_n\right) \rightarrow \psi \quad\text { in } L_{\text {loc}}^2(\mathbb{R};\mathbb{C})
\end{equation}
up to a subsequence.  Since $\psi\in H^1(\mathbb{R};\mathbb{C})$, we can choose $R$ large so that 
\begin{equation}\label{H1 tail bound}
\int_{|x|>R}\left( \left| \psi^{\prime}\right|^2+|\psi|^2\right)dx \leq \varepsilon.
\end{equation}
Combining \eqref{local L2 conv} with \eqref{H1 tail bound}, we have
\begin{equation*}
\psi_{n}\left(\cdot+y_n\right) \rightarrow \psi \quad\text { in } L^2(\mathbb{R};\mathbb{C}).
\end{equation*}
In particular,
\begin{equation}\label{L4 conv}
\psi_{n}\left(\cdot+y_n\right) \rightarrow \psi \quad\text { in } L^4(\mathbb{R};\mathbb{C}).
\end{equation}
By Lemma \ref{lem:split-global},
\begin{equation}\label{nonlocal convergence}
\left\| |D|^{1/2}\left(\left|\psi_n\left(\cdot+y_n\right)\right|^2-|\psi|^2\right)\right\|_{L^2} \rightarrow 0,
\end{equation}
so   $\psi$ is admissible.
In view of \eqref{weak conv. of transl.}, \eqref{L4 conv} and \eqref{nonlocal convergence},
$$
E(\psi) \leq \liminf _{n \rightarrow \infty} E\left(\psi_n\right)=I_q \quad \text{and}\quad Q(\psi_n)\to Q(\psi).
$$
Therefore, $\psi$ is a minimizer for \eqref{eq:Iq-def}.

The Lagrange multiplier theorem implies that  there exists   $ \mu \in \mathbb{R} $ such that
$$
\left\langle E'(\psi), \nu \right\rangle = \mu \left\langle Q'(\psi), \nu \right\rangle \quad \text{for all } \nu \in H^1(\mathbb{R};\mathbb{C}).
$$
In particular, 
$$
\left\langle E'(\psi), \psi \right\rangle = \mu \left\langle Q'(\psi), \psi \right\rangle.
$$
Thus,
$$
\mu = \frac{E(\psi)}{2Q(\psi)} > 0
$$
and $ \psi$ satisfies \eqref{ODE 5}  in $H^1(\mathbb{R};\mathbb{C})$ with $\gamma=-\alpha_1^2\mu$ and $ A=-\alpha_2^2\mu$.
\end{proof}

The remainder of this section is devoted to the auxiliary results used in the proof above. First, we establish a scaling law for $I_{q}$.

\begin{lemma}\label{scaling lemma}
Let $\theta, q > 0$. Then
$$
I_{\theta q} = q^{1/2} I_\theta.
$$
In particular, 
$$
I_q = q^{1/2} I_1.
$$
\end{lemma}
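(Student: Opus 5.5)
The plan is to use the homogeneity of the two functionals under amplitude scaling $\psi\mapsto\lambda\psi$, with $\lambda>0$ a constant; no spatial dilation is needed. The functional $E$ is purely quadratic in $\psi$:
$$
E(\lambda\psi)=\lambda^2E(\psi),
$$
since both $\int|\psi'|^2$ and $\int|\psi|^2$ are quadratic. The constraint $Q$ is purely quartic, because $|\lambda\psi|^2=\lambda^2|\psi|^2$ and $|D|^{1/2}$ is linear, so
$$
Q(\lambda\psi)=\lambda^4Q(\psi).
$$
Also, multiplication by a positive constant is a bijection of $H^1(\mathbb{R};\mathbb{C})$, and it is compatible with the isomorphism $\varphi\mapsto e^{icx/2}\varphi$. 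So it does not matter whether the problem \eqref{eq:Iq-def} is written in the $\varphi$ or the $\psi$ variables.

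Next I will set $\lambda=q^{1/4}$. The map $\psi\mapsto q^{1/4}\psi$ sends the constraint set $\{Q=\theta\}$ bijectively onto $\{Q=\theta q\}$, with inverse $\psi\mapsto q^{-1/4}\psi$. Along this bijection,
$$
E(q^{1/4}\psi)=q^{1/2}E(\psi).
$$
Taking infima over the two corresponding sets gives $I_{\theta q}=q^{1/2}I_\theta$. Since $q^{1/2}>0$, the infimum commutes with multiplication by it. Both infima are finite: the constraint sets are nonempty, since any $\psi\in H^1\setminus\{0\}$ can be rescaled to hit any positive level because $\alpha_1^2+\alpha_2^2\neq0$. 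They are also bounded below by $0$, since $E\ge0$ when $\omega+c^2/4<0$.

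The particular case follows by taking $\theta=1$, which gives $I_q=q^{1/2}I_1$.

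There is no real obstacle here. The only points to check carefully are the nonemptiness of the constraint sets, so that the infima are genuine real numbers, and that the scaling preserves membership in $H^1$. Both are immediate.
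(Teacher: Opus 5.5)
Your proposal is correct and follows the paper's proof: the amplitude scaling $\psi\mapsto q^{1/4}\psi$ gives $E(q^{1/4}\psi)=q^{1/2}E(\psi)$ and $Q(q^{1/4}\psi)=qQ(\psi)$, so it maps $\{Q=\theta\}$ bijectively onto $\{Q=\theta q\}$ and the infima scale by $q^{1/2}$. Your extra checks are correct details that the paper leaves implicit: the constraint sets are nonempty since $Q(\psi)>0$ for $\psi\neq0$ when $\alpha_1^2+\alpha_2^2\neq0$, and the infima are finite since $E\ge0$ for $\omega+c^2/4<0$.
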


\begin{proof}
Let $\psi_q = q^{1/4} \psi$. Then
$$
E(\psi_q) = q^{1/2} E(\psi)
\quad \text{and} \quad
 Q(\psi_q) = q Q(\psi).
$$
Therefore,
$$
\begin{aligned}
I_{\theta q}
&= \inf \{ E(\psi_q) : \ \psi\in H^1(\mathbb{R};\mathbb{C})\enspace \text{and} \enspace Q(\psi_q) = \theta q \}\\
&= \inf \{ q^{1/2} E(\psi) :\ \psi\in H^1(\mathbb{R};\mathbb{C})\enspace \text{and}\enspace Q(\psi) = \theta \}\\
&= q^{1/2} I_\theta.
\end{aligned}
$$
\end{proof}

As a consequence, we have the strict subadditivity.

\begin{corollary}
Let $0 < q < 1$. Then
$$
I_1 < I_q + I_{1-q}.
$$    
\end{corollary}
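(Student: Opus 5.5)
The strict subadditivity will follow directly from the scaling law of Lemma \ref{scaling lemma}, $I_q=q^{1/2}I_1$ for every $q>0$, combined with the strict concavity of $t\mapsto t^{1/2}$. The only extra ingredient needed is that $I_1>0$.

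First I will check that $I_1>0$. Since $\omega+c^2/4<0$, we have
$$
E(\psi)=\tfrac12|\psi|_{\dot H^1}^2-\tfrac12\Bigl(\omega+\tfrac{c^2}{4}\Bigr)\|\psi\|_{L^2}^2\simeq\|\psi\|_{H^1}^2 .
$$
On the constraint set $Q(\psi)=1$ the quartic constraint is controlled by the $H^1$ norm. For the local part, the Gagliardo--Nirenberg inequality gives $\|\psi\|_{L^4}^4\lesssim |\psi|_{\dot H^1}\|\psi\|_{L^2}^3$. For the nonlocal part, the same bound used in Lemma \ref{lem:K-positive-eventually} gives $\||D|^{1/2}|\psi|^2\|_{L^2}^2\lesssim |\psi|_{\dot H^1}^2\|\psi\|_{L^2}^2$. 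Hence
$$
1=Q(\psi)\lesssim \|\psi\|_{H^1}^4 ,
$$
because $\alpha_1^2+\alpha_2^2\neq0$ and both terms are bounded by $\|\psi\|_{H^1}^4$. It follows that $E(\psi)\gtrsim 1$ uniformly on the constraint set, so $I_1>0$. (Equivalently, $I_1\ge 0$ is obvious, and $I_1=0$ would force a constrained sequence with $\|\psi_n\|_{H^1}\to0$, contradicting $Q(\psi_n)=1$.) Finiteness of $I_1$ is clear, since any nonzero $\psi\in H^1$ can be rescaled to satisfy $Q=1$.

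With $I_1\in(0,\infty)$ and $0<q<1$, the scaling law gives
$$
I_q+I_{1-q}=\bigl(q^{1/2}+(1-q)^{1/2}\bigr)I_1 .
$$
Since $q^{1/2}>q$ and $(1-q)^{1/2}>1-q$ for $q\in(0,1)$, the bracket is strictly larger than $q+(1-q)=1$. Multiplying by $I_1>0$ then yields $I_1<I_q+I_{1-q}$.

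The only step requiring care is the positivity $I_1>0$. If it failed, the inequality would degenerate to $0<0$. Positivity rests on the embedding estimates for the nonlocal quartic term, which were already used earlier in the paper.
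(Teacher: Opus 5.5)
Your proof is correct and follows the paper's route: the paper states the corollary as an immediate consequence of the scaling law $I_q=q^{1/2}I_1$ from Lemma \ref{scaling lemma}, combined with $q^{1/2}+(1-q)^{1/2}>1$ for $q\in(0,1)$. Your explicit check that $0<I_1<\infty$, via coercivity of $E$ and the bound $Q(\psi)\lesssim\|\psi\|_{H^1}^4$, supplies a step the paper uses but leaves implicit.
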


Having established the strict subadditivity, we are now in a position to exclude the vanishing case.

\begin{lemma}\label{vanishing lemma}
Vanishing cannot occur.
\end{lemma}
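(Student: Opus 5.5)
We argue by contradiction and assume that $\rho_n=|\psi_n'|^2+|\psi_n|^2$ vanishes, i.e. for every $R>0$,
$$
\lim_{n\to\infty}\sup_{y\in\mathbb{R}}\int_{|x-y|\le R}\rho_n\,dx=0.
$$
The goal is to show that this forces $Q(\psi_n)\to 0$, which contradicts the constraint $Q(\psi_n)=q>0$.

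\textbf{Local quartic term.} Since $\{\psi_n\}$ is bounded in $H^1(\mathbb{R};\mathbb{C})$, vanishing of $\rho_n$ gives
$$
\sup_y\int_{|x-y|\le 1}|\psi_n|^2\,dx\to 0 .
$$
We then apply the standard Lions lemma in one dimension: cover $\mathbb{R}$ by unit intervals $J_k$ and use the local Gagliardo--Nirenberg bound $\|f\|_{L^\infty(J_k)}^2\lesssim \|f\|_{L^2(J_k)}\|f\|_{H^1(J_k)}$. Summing over $k$ yields
$$
\int_{\mathbb{R}}|\psi_n|^4\,dx\lesssim \Big(\sup_k\|\psi_n\|_{L^2(J_k)}\Big)\,\|\psi_n\|_{H^1}^3\to 0 .
$$
Interpolation with the uniform $L^\infty$ bound also gives $\|\psi_n\|_{L^p}\to 0$ for every $2<p\le\infty$.

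\textbf{Nonlocal term.} We use
$$
\bigl\||D|^{1/2}|\psi_n|^2\bigr\|_{L^2}^2=\int_{\mathbb{R}}|\psi_n|^2\,\partial_x\mathcal{H}(|\psi_n|^2)\,dx .
$$
By Plancherel and Cauchy--Schwarz this is bounded by $\bigl\||\psi_n|^2\bigr\|_{L^2}\,\bigl|\,|\psi_n|^2\bigr|_{\dot H^1}$, where $\bigl\||\psi_n|^2\bigr\|_{L^2}=\|\psi_n\|_{L^4}^2$. Since $\partial_x|\psi_n|^2=2\operatorname{Re}(\bar\psi_n\psi_n')$, we get
$$
\bigl|\,|\psi_n|^2\bigr|_{\dot H^1}\le 2\|\psi_n\|_{L^\infty}\|\psi_n'\|_{L^2}\lesssim\|\psi_n\|_{H^1}^2 ,
$$
which is uniformly bounded. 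Hence
$$
\bigl\||D|^{1/2}|\psi_n|^2\bigr\|_{L^2}^2\lesssim \|\psi_n\|_{L^4}^2\,\|\psi_n\|_{H^1}^2\to 0 .
$$
(This is just the same Gagliardo--Nirenberg type estimate already used in the proof of Lemma \ref{lem:K-positive-eventually}.)

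\textbf{Conclusion.} Combining the two steps,
$$
Q(\psi_n)=\tfrac14\bigl(\alpha_1^2\||D|^{1/2}|\psi_n|^2\|_{L^2}^2+\alpha_2^2\|\psi_n\|_{L^4}^4\bigr)\to 0,
$$
which contradicts $Q(\psi_n)=q>0$. This needs only that $\{\psi_n\}$ is bounded in $H^1$, which follows because $E$ is equivalent to the squared $H^1$ norm when $\omega+c^2/4<0$, and $E(\psi_n)\to I_q<\infty$.

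The only genuinely delicate point is the nonlocal term: we must ensure that $\dot H^{1/2}$ control of $|\psi_n|^2$ comes from the vanishing $L^4$ norm rather than requiring local compactness. The interpolation $\dot H^{1/2}$ between $L^2$ and $\dot H^1$ handles this cleanly.
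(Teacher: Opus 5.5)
Your proof is correct, but for the nonlocal term it takes a genuinely different and much shorter route than the paper.

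\textbf{The paper's route.} It localizes $|D|^{1/2}(|\psi_n|^2)$ on a tiling by intervals $J_k$ with cutoffs $\chi_k$. The localized pieces $|D|^{1/2}(\chi_k|\psi_n|^2)$ are controlled through $H^1\hookrightarrow H^{1/2}$ and the local smallness of $\rho_n$. The tails are split into two parts: an annulus part, handled with the Lipschitz bound on $\chi_k$ and the kernel $|x-y|^{-3/2}$, and a far part, handled with a discrete Young inequality on $\mathbb{Z}$. This gives $\||D|^{1/2}(|\psi_n|^2)\|_{L^2}^2\lesssim\varepsilon\|\psi_n\|_{H^1}^2$. Only afterwards is the $L^4$ term treated, by a local Gagliardo--Nirenberg sum essentially as in your first step.

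\textbf{Your route.} You prove $\|\psi_n\|_{L^4}\to0$ first. You then get the nonlocal term for free from $\||D|^{1/2}f\|_{L^2}^2\le\|f\|_{L^2}\|f'\|_{L^2}$ with $f=|\psi_n|^2$. This works because $\|f\|_{L^2}=\|\psi_n\|_{L^4}^2$ and, in one dimension, $\|f'\|_{L^2}\le 2\|\psi_n\|_{L^\infty}\|\psi_n'\|_{L^2}$ stays bounded. The paper itself uses this same interpolation for the tail part in Lemma \ref{lem:split-global} and in the proof of Theorem \ref{thm: fixed quartic term}.

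\textbf{Comparison.} Your argument sidesteps all the kernel, principal-value and overlap bookkeeping. The paper's localization also relies on $H^1$ control of $|\psi_n|^2$ on each piece, so it offers no extra generality here. What it does show is that the quadratic form $\int f\,|D|f$ is nearly local, a structure of the kind that resurfaces in the dichotomy analysis.

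One small aside: your remark that $\|\psi_n\|_{L^p}\to0$ up to $p=\infty$ does not follow at the endpoint from interpolation with an $L^\infty$ bound. It does follow from your local estimate $\|\psi_n\|_{L^\infty(J_k)}^2\lesssim\|\psi_n\|_{L^2(J_k)}\|\psi_n\|_{H^1(J_k)}$, which holds uniformly in $k$. In any case that remark is not used in your argument.
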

\begin{proof}

Let us briefly sketch the proof. We argue by contradiction and assume that vanishing occurs.  Then
\begin{equation}\label{vanish}
\lim_{n\to\infty}\sup_{y\in\mathbb{R}}\int_{|x-y|\leq R}
\bigl(|\psi_n'(x)|^2+|\psi_n(x)|^2\bigr) dx=0
\end{equation}
for all $R>0$. So, we can assume that $R\ge 4$.
Using a covering of $\mathbb{R}$ by overlapping intervals and suitable cutoff functions, we split
$|D|^{1/2}(|\psi_n|^2)$ into a localized term and a tail term on each interval.
The localized term is controlled by a local $H^1$-estimate, and summing over all
intervals shows that its total contribution tends to zero by \eqref{vanish}. The tail term is small by the singular-integral representation of $|D|^{1/2}$ and
the separation of supports, and summing over all intervals shows that its total
contribution is also negligible. Hence,
$$
\bigl\||D|^{1/2}(|\psi_n|^2)\bigr\|_{L^2}\to 0,
$$
Using the standard approach, 
$$\|\psi_n\|_{L^4}\to 0.$$
Therefore, $$Q(\psi_n)\to 0,$$
which is a  contradiction.

Now we turn to the details. We set
$$
J_k:=(2k-1,2k+1),\qquad
J_k^*:=(2k-2,2k+2),\qquad
\mathcal{J}_k:=J_k^*\setminus J_k.
$$
Let $\chi_k\in C_c^\infty(\mathbb{R};\mathbb{R})$ satisfy
$$
\chi_k\equiv1\quad\text{on }J_k,
\qquad
\supp \chi_k\subset J_k^*,
$$
and with $\|\chi_k'\|_{L^\infty}\leq C_0$ for all $k$. Now we deal with the nonlocal term and local term separately.

\textbf{Step 1: The nonlocal term.} Let 
$$
T_{k,n}:=
\bigl\||D|^{1/2}((1-\chi_k)|\psi_n|^2)\bigr\|_{L^2(J_k)}.
$$
Then
$$
\bigl\||D|^{1/2}(|\psi_n|^2)\bigr\|_{L^2(J_k)}
\le
\bigl\||D|^{1/2}(\chi_k|\psi_n|^2)\bigr\|_{L^2}
+
T_{k,n}.
$$
Squaring and summing in $k$, we obtain
\begin{equation}\label{global split}
\bigl\||D|^{1/2}(|\psi_n|^2)\bigr\|_{L^2}^2
\lesssim
\sum_k \bigl\||D|^{1/2}(\chi_k|\psi_n|^2)\bigr\|_{L^2}^2
+
\sum_k T_{k,n}^2.
\end{equation}
We estimate the two terms on the right-hand side of \eqref{global split} separately.

\smallskip
\emph{Local contribution.}
Using $H^1(\mathbb{R};\mathbb{C})\hookrightarrow H^{1/2}(\mathbb{R};\mathbb{C})$,
$$
\bigl\||D|^{1/2}(\chi_k|\psi_n|^2)\bigr\|_{L^2}^2
\lesssim
\|\chi_k|\psi_n|^2\|_{H^1}^2
\lesssim
\|\psi_n\|_{H^1(J_k^*)}^4.
$$
Recall that $J_k^*$ has length $4$, so it is contained in an interval of the form $\{|x-y|\leq R\}$. Hence by \eqref{vanish},
$$
\|\psi_n\|_{H^1(J_k^*)}^2
=
\int_{J_k^*}\bigl(|\psi_n'|^2+|\psi_n|^2\bigr) dx
\leq \varepsilon
$$
for all $k$ and all sufficiently large $n$. Therefore
$$
\bigl\||D|^{1/2}(\chi_k|\psi_n|^2)\bigr\|_{L^2}^2
\lesssim
\varepsilon \|\psi_n\|_{H^1(J_k^*)}^2.
$$
Summing in $k$ and using that the intervals $J_k^*$ have bounded overlap, we get
\begin{equation}\label{local bound}
\sum_k \bigl\||D|^{1/2}(\chi_k|\psi_n|^2)\bigr\|_{L^2}^2
\lesssim
\varepsilon \sum_k \|\psi_n\|_{H^1(J_k^*)}^2
\lesssim
\varepsilon \|\psi_n\|_{H^1}^2.
\end{equation}

\smallskip
\emph{Tail contribution.}
We set
$$
f_n:=|\psi_n|^2,
\qquad
h_{k,n}:=(1-\chi_k)f_n.
$$
Since $h_{k,n}(x)=0$ for $x\in J_k$,  we have
$$
|D|^{1/2}h_{k,n}(x)
=
-c\, \mathrm{ p.v.}\int_{\mathbb{R}}\frac{(1-\chi_k(y))f_n(y)}{|x-y|^{3/2}} dy.
$$
Decomposing $\mathbb{R}=J_k\cup\mathcal{J}_k\cup(\mathbb{R}\setminus J_k^*)$, we split
$$
|D|^{1/2}h_{k,n}(x)=I_{k,n}^{\mathrm{ann}}(x)+I_{k,n}^{\mathrm{far}}(x),
$$
where
$$
I_{k,n}^{\mathrm{ann}}(x):=
-c\int_{\mathcal{J}_k}\frac{(1-\chi_k(y))f_n(y)}{|x-y|^{3/2}} dy,
\qquad
I_{k,n}^{\mathrm{far}}(x):=
-c\int_{\mathbb{R}\setminus J_k^*}\frac{f_n(y)}{|x-y|^{3/2}} dy.
$$
Thus,
$$
T_{k,n}
\le
T_{k,n}^{\mathrm{ann}}+T_{k,n}^{\mathrm{far}},
$$
where
$$
T_{k,n}^{\mathrm{ann}}:=\|I_{k,n}^{\mathrm{ann}}\|_{L^2(J_k)}\quad \text{and}
\quad
T_{k,n}^{\mathrm{far}}:=\|I_{k,n}^{\mathrm{far}}\|_{L^2(J_k)}.
$$
Now we will find upper estimates for the newly defined quantities.

\smallskip
\textit{(a) Far part.}
Since
$$
|x-y|^{-3/2}\lesssim |m|^{-3/2}\quad \text{for all }x\in J_k \text{ and } y\in J_{k+m} \text{ with }|m|\geq 2,
$$
and
$$|x-y|^{-3/2}\lesssim 1 \quad \text{for all } x\in J_k  \text{ and } y\in(\mathbb{R}\setminus J_k^*)\cap J_{k \pm1},$$
it follows that
$$
|I_{k,n}^{\mathrm{far}}(x)|
\lesssim
a_{k-1}+a_{k+1}+\sum_{|m|\ge2}|m|^{-3/2}a_{k+m},
$$
where $$
a_k:=\int_{J_k}f_n(y) dy=\int_{J_k}|\psi_n(y)|^2 dy.
$$
Let $w\in\ell^1(\mathbb{Z})$ be defined by
$$
w_0=0, \quad w_{\pm1}=1,\quad\text{and}\quad w_m=|m|^{-3/2} \enspace\,\text{ for all } |m|\geq 2.
$$
Then
$$
T_{k,n}^{\mathrm{far}}
\leq |J_k|^{1/2}\|I_{k,n}^{\mathrm{far}}\|_{L^\infty(J_k)}
\lesssim
(w*a)_k,
$$
where $(w * a)_k:=\sum_m w_m a_{k+m}$ is the discrete convolution. Hence, Young's inequality on $\mathbb{Z}$ yields
\begin{equation}\label{far bound rewrite}
\sum_k (T_{k,n}^{\mathrm{far}})^2
\lesssim
\|w*a\|_{\ell^2}^2
\lesssim
\|w\|_{\ell^1}^2\|a\|_{\ell^2}^2
\lesssim
\sum_k a_k^2.
\end{equation}
Now we estimate $\sum_k a_k^2$. Since each $J_k$ is contained in an interval of the form $\{|x-y|\leq R\}$, it follows from \eqref{vanish} that
$$
a_k
\leq
\int_{J_k}\bigl(|\psi_n'|^2+|\psi_n|^2\bigr) dx
\leq
\varepsilon
$$
for all $k$,
and therefore
$$
\sum_k a_k^2
\le
\sup_k a_k\sum_k a_k
\lesssim
\varepsilon \|\psi_n\|_{L^2}^2.
$$
Substituting this into \eqref{far bound rewrite}, we get
\begin{equation}\label{far final}
\sum_k (T_{k,n}^{\mathrm{far}})^2
\lesssim
\varepsilon \|\psi_n\|_{L^2}^2.
\end{equation}

\smallskip
\emph{(b) Annulus part.}
Using 
$$
|\chi_k(x)-\chi_k(y)|
\le
\|\chi_k'\|_{L^\infty}|x-y|
\le
C_0|x-y|.
$$
with $x\in J_k$ and $y\in \mathcal{J}_k$, and the Cauchy-Schwarz inequality,
$$
|I_{k,n}^{\mathrm{ann}}(x)|^2
\lesssim
\Bigl(\int_{\mathcal{J}_k}|x-y|^{-1} dy\Bigr)\|f_n\|_{L^2(\mathcal{J}_k)}^2.
$$
Integrating in $x\in J_k$ and using
$$
\int_{J_k}\int_{\mathcal{J}_k}|x-y|^{-1} dy dx<\infty,
$$
we obtain
$$
T_{k,n}^{\mathrm{ann}}
\lesssim
\|f_n\|_{L^2(\mathcal{J}_k)}
=
\|\psi_n\|_{L^4(\mathcal{J}_k)}^2.
$$
Therefore,
$$
\sum_k (T_{k,n}^{\mathrm{ann}})^2
\lesssim
\sum_k \|\psi_n\|_{L^4(\mathcal{J}_k)}^4.
$$
Since $\mathcal{J}_k$ is contained in an interval of the form $\{|x-y|\leq R\}$, it follows from \eqref{vanish} that
$$
\int_{\mathcal{J}_k}\bigl(|\psi_n'|^2+|\psi_n|^2\bigr) dx
\leq \varepsilon
$$
for all $k$.
By the Gagliardo--Nirenberg inequality,
\begin{equation*}
\|\psi_n\|_{L^4(\mathcal{J}_k)}^4
\lesssim
\|\psi_n\|_{L^2(\mathcal{J}_k)}^2
\Bigl(\|\psi_n'\|_{L^2(\mathcal{J}_k)}^2+\|\psi_n\|_{L^2(\mathcal{J}_k)}^2\Bigr)
\lesssim
\varepsilon \|\psi_n\|_{L^2(\mathcal{J}_k)}^2.
\end{equation*}
Summing in $k$ and keeping in mind bounded overlap of the annuli $\mathcal{J}_k$, we obtain
\begin{equation}\label{annulus bound rewrite}
\sum_k (T_{k,n}^{\mathrm{ann}})^2
\lesssim
\varepsilon \sum_k \|\psi_n\|_{L^2(\mathcal{J}_k)}^2
\lesssim
\varepsilon \|\psi_n\|_{L^2}^2.
\end{equation}
From \eqref{far final}--\eqref{annulus bound rewrite}, it follows that
\begin{equation}\label{Tk bound}
\sum_k T_{k,n}^2
\lesssim
\sum_k (T_{k,n}^{\mathrm{far}})^2+\sum_k (T_{k,n}^{\mathrm{ann}})^2
\lesssim
\varepsilon \|\psi_n\|_{L^2}^2.
\end{equation}

Combining \eqref{global split}, \eqref{local bound} and \eqref{Tk bound}, we have
\begin{equation}\label{nonlocal to zero}
\bigl\||D|^{1/2}(|\psi_n|^2)\bigr\|_{L^2}^2
\lesssim
\varepsilon \|\psi_n\|_{H^1}^2
\end{equation}
for sufficiently large $n$.

\medskip
\textbf{Step 2: The local quartic term.}
Now we will show that the local quartic term in the constraint is also arbitrarily small.
Applying the Gagliardo--Nirenberg inequality, 
$$
\|\psi_n\|_{L^4(J_k)}^4
\lesssim
\|\psi_n\|_{L^2(J_k)}^2
\Bigl(\|\psi_n'\|_{L^2(J_k)}^2+\|\psi_n\|_{L^2(J_k)}^2\Bigr).
$$
Summing over $k$, we get
\begin{equation}\label{quartic to zero}
\|\psi_n\|_{L^4}^4
\lesssim \|\psi_n\|_{L^2}^2
\sup_k \int_{J_k} (|\psi_n'|^2+|\psi_n|^2) dx
\lesssim\varepsilon\|\psi_n\|_{L^2}^2
\end{equation}
for sufficiently large $n$ by \eqref{vanish}. 

In view of \eqref{nonlocal to zero}--\eqref{quartic to zero},
$$Q(\psi_n)\to 0\quad \text{as }n\to \infty,$$
which leads to a contradiction.
\end{proof}

Then we exclude the dichotomy alternative.

\begin{lemma}\label{dichotomy lemma}
Dichotomy cannot occur.
\end{lemma}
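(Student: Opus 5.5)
We argue by contradiction and assume that dichotomy occurs with some $\ell\in(0,L)$. Fix $\varepsilon>0$, and let $R=R(\varepsilon)$, $R_n\to\infty$ and $\{y_n\}$ be as in Lemma \ref{lem:CC}(iii). We then choose smooth cutoffs $\xi,\eta$ with $0\le\xi,\eta\le1$ such that $\xi\equiv1$ on $[-1,1]$, $\supp\xi\subset[-2,2]$, $\eta\equiv1$ outside $[-2,2]$ and $\eta\equiv0$ on $[-1,1]$. We set
\[
\psi_{1,n}:=\xi\Bigl(\frac{\cdot-y_n}{R_1}\Bigr)\psi_n,\qquad \psi_{2,n}:=\eta\Bigl(\frac{\cdot-y_n}{R_{2,n}}\Bigr)\psi_n,
\]
with $R\le R_1$ and $2R_{2,n}\le R_n$. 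For instance, take $R_1=R$ and $R_{2,n}=R_n/2$, which is admissible for large $n$ after possibly enlarging $R$. The supports are then separated by a distance of order $R_n\to\infty$. The middle region $\{R<|x-y_n|<R_n\}$ carries $\rho_n$-mass less than $\varepsilon$, and the cutoff derivatives are $O(1/R)$.

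\textbf{Step 1 (splitting the energy).} Since the $\rho_n$-mass in the middle region is less than $\varepsilon$, we get
\[
E(\psi_n)\ge E(\psi_{1,n})+E(\psi_{2,n})-C\varepsilon,\qquad \|\psi_n-\psi_{1,n}-\psi_{2,n}\|_{H^1}^2\lesssim\varepsilon.
\]
The local quartic term splits in the same way:
\[
\bigl|\|\psi_n\|_{L^4}^4-\|\psi_{1,n}\|_{L^4}^4-\|\psi_{2,n}\|_{L^4}^4\bigr|\lesssim\varepsilon,
\]
using the Gagliardo--Nirenberg inequality on the middle region together with the uniform $H^1$ bound.

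\textbf{Step 2 (splitting the nonlocal term).} This is the main obstacle. We write
\[
|\psi_n|^2=|\psi_{1,n}|^2+|\psi_{2,n}|^2+r_n,
\]
where $r_n$ is supported in the middle region. First, $\||D|^{1/2}r_n\|_{L^2}\lesssim\|r_n\|_{H^1}$ is small, by a product estimate and the $\varepsilon$-smallness of the $H^1$ mass in that region. For the cross term we use the kernel representation: if $f,g\ge0$ have supports at distance $d_n\gtrsim R_n$, then
\[
\langle |D|^{1/2}f,|D|^{1/2}g\rangle=\langle |D|f,g\rangle=-c\iint\frac{f(x)g(y)}{|x-y|^2}\,dx\,dy,
\]
which is bounded by $d_n^{-2}\|f\|_{L^1}\|g\|_{L^1}\to0$. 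Here $\|f\|_{L^1},\|g\|_{L^1}\lesssim\|\psi_n\|_{L^2}^2$ are bounded. This should be stated as an auxiliary lemma, in the spirit of Lemma \ref{lem:split-global}, and used with $f=|\psi_{1,n}|^2$ and $g=|\psi_{2,n}|^2$. It gives
\[
Q(\psi_n)=Q(\psi_{1,n})+Q(\psi_{2,n})+O(\varepsilon)+o(1).
\]

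\textbf{Step 3 (contradiction).} Pass to a subsequence so that $Q(\psi_{1,n})\to q_1(\varepsilon)$ and $Q(\psi_{2,n})\to q_2(\varepsilon)$, and then let $\varepsilon\to0$ along a sequence, so that $q_1+q_2=q$. For each piece we use the scaling law of Lemma \ref{scaling lemma} in the form
\[
E(\psi)\ge I_1\,Q(\psi)^{1/2}.
\]
Note that $I_1>0$, because $Q\lesssim\|\psi\|_{H^1}^4$ while $E\simeq\|\psi\|_{H^1}^2$. Combining with Step 1 gives
\[
I_q\ge I_1\bigl(q_1^{1/2}+q_2^{1/2}\bigr)-C\varepsilon.
\]
If both $q_1,q_2>0$, then strict concavity gives $q_1^{1/2}+q_2^{1/2}>q^{1/2}$, which contradicts $I_q=I_1q^{1/2}$; this is the strict subadditivity Corollary.

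If instead $q_2=0$, we argue as follows. The piece $\psi_{2,n}$ carries $H^1$-type mass about $L-\ell>0$, so $E(\psi_{2,n})\gtrsim L-\ell$, while $E(\psi_{1,n})\ge I_1 q^{1/2}-o(1)$. This gives $I_q\ge I_q+c(L-\ell)-C\varepsilon$, again a contradiction. The case $q_1=0$ is symmetric, with $\psi_{1,n}$ carrying mass about $\ell>0$. Hence dichotomy cannot occur.
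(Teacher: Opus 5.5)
Your proposal is correct and follows essentially the same route as the paper. Both use a cutoff splitting around $y_n$, decompose $E$ and $Q$ with the $|x-y|^{-2}$ kernel bound for the far interaction, and finish with the scaling law $I_q=q^{1/2}I_1$ and a case analysis on the limiting constraint values. Your case analysis is slightly more complete than the paper's: you also treat $q_2=0$, using the mass $L-\ell$ carried by $\psi_{2,n}$. The paper's list of cases ($\lambda_1\in(0,1)$, $\lambda_1=0$, $\lambda_1>1$) omits $\lambda_1=1$, and its case $\lambda_1>1$ is in any event vacuous because $Q\ge 0$.
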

\begin{proof}
By Lemma \ref{scaling lemma}, it suffices to consider the case $q=1$.
Thus, let $\{\psi_n\}$ be a minimizing sequence for $I_1$, so that
$$
Q(\psi_n)=1.
$$
Suppose that the dichotomy holds. The proof is divided into four steps. We first introduce the cutoff
decomposition, then estimate the interaction and remainder terms, identify
the limiting constraint values, and finally exclude all possible cases.

\textbf{Step 1: Cutoff decomposition.} 
Let $\xi_1,\xi_2\in C^\infty(\mathbb{R}, \mathbb{R})$ satisfy
$$
\operatorname{supp}\xi_1 \subset \{x \in \mathbb{R} : |x| \leq 2\}
$$
with $\xi_1(x) = 1$ for $|x| \leq 1$ and 
$$\operatorname{supp}\xi_2 \subset \{x \in \mathbb{R} : |x| \geq 1/2\}$$
with $\xi_2(x) = 1$ for $|x| \geq 1$.  
Using them, we also define
$$
\psi_{n,1}(x) := \xi_1\left(\frac{|x - y_n|}{R}\right)\psi_n(x),
$$
and
$$\psi_{n,2}(x) := \xi_2\left(\frac{|x - y_n|}{R_n}\right)\psi_n(x).$$
Observe that 
\begin{equation*}
\operatorname{supp} \psi_{n,1} \subset [-2R + y_n,  2R + y_n], 
\end{equation*}
with
$$\psi_{n,1} = \psi_n \ \text{on}\ [-R + y_n,  R + y_n],$$
and
\begin{equation*}
\operatorname{supp} \psi_{n,2} \subset 
\left(-\infty, -\frac{R_n}{2} + y_n\right] 
\cup 
\left[\frac{R_n}{2} + y_n,  +\infty\right),
\end{equation*}
with
$$\psi_{n,2} = \psi_n \ \text{on}\ 
\left(-\infty, -R_n + y_n\right) 
\cup 
\left(R_n + y_n,  +\infty\right).$$
Choosing $n$ sufficiently large so that 
$$
2R \leq \frac{R_n}{2},
$$
we ensure that
\begin{equation}\label{do not intersect}
\operatorname{supp} \psi_{n,1} \cap \operatorname{supp} \psi_{n,2} = \varnothing.
\end{equation}
We also define
$$
r_n := \psi_n - \psi_{n,1} - \psi_{n,2}.
$$
It is clear that
$$
\operatorname{supp} r_n 
\subset  \{x \in \mathbb{R} : R \leq |x - y_n| \leq R_n \}.
$$

\textbf{Step 2: Decomposition estimates.}
Expanding
\begin{equation*}
\int_{\mathbb{R}}(|D|^{1/2}|\psi_n|^2)^2dx
= \int_{\mathbb{R}} \big(|D|^{1/2} |\psi_{n,1} + \psi_{n,2} + r_n|^2\big)^2dx
\end{equation*} 
and grouping the terms according to their supports, we rewrite as follows
$$
\begin{aligned}
\int_{\mathbb{R}}(|D|^{1/2}|\psi_n|^2)^2dx
&= \int_{\mathbb{R}}(|D|^{1/2}|\psi_{n,1}|^2)dx + \int_{\mathbb{R}}(|D|^{1/2}|\psi_{n,2}|^2)dx\\
&\quad
+ \mathcal{I}_{12} + \mathcal{I}_{1r} + \mathcal{I}_{2r} + \mathcal{I}_r,
\end{aligned}
$$
where
\begin{itemize}
    \item $\mathcal{I}_{12}$ collects the interaction between $\psi_{n,1}$ and $\psi_{n,2}$,
    \item $\mathcal{I}_{ir}$ ($i=1,2$) collects the interaction between $\psi_{n,i}$ and $r_n$, and
    \item $\mathcal{I}_r$ contains all terms that are at least quadratic in $r_n$.
\end{itemize}
Then $\mathcal{I}_{12}=0$ by \eqref{do not intersect}.  
Moreover, the cutoff construction yields
$$
\|r_n\|_{L^2}=O(\varepsilon^{1/2}), \qquad
|r_n|_{\dot H^1}=O(\varepsilon^{1/2}), \qquad
\|r_n\|_{L^\infty}=O(\varepsilon^{1/2}),
$$
and hence by the Sobolev interpolation and Kato–Ponce inequality,
$$
\| |D|^{1/2}\operatorname{Re}(\psi_{n,i}\bar{r_n})\|_{L^2}=O(\varepsilon^{1/2}), 
\qquad i=1,2.
$$
Consequently,
$$
\mathcal{I}_{1r}, \mathcal{I}_{2r}=O(\varepsilon^{1/2}).
$$
For the pure remainder part, we have
$$
\big\| |D|^{1/2}|r_n|^2\big\|_{L^2}=O(\varepsilon^{3/4}).
$$
Furthermore, the subleading mixed terms inside $\mathcal{I}_r$ are of smaller order, namely,
$$
O(\varepsilon^{5/4}),
$$
obtained by the Cauchy–Schwarz inequality. Finally, since the supports of $\psi_{n,1}$ and $\psi_{n,2}$ are separated by a distance $d_n \to \infty$,  
the fractional integral representation of $|D|^{1/2}$ gives
$$
\int_{\mathbb{R}} |D|^{1/2}|\psi_{n,1}|^2   |D|^{1/2}|\psi_{n,2}|^2dx
= O(d_n^{-2}) = O(\varepsilon).
$$
Therefore,
\begin{equation}\label{eq:Q-final}
 Q(\psi_n)
= Q(\psi_{n,1}) + Q(\psi_{n,2}) + O(\varepsilon^{1/2}).
\end{equation}
Moreover,
\begin{equation}\label{energy decomposition}
E(\psi_n)
= E(\psi_{n,1}) + E(\psi_{n,2}) + O(\varepsilon).
\end{equation}

\textbf{Step 3: Limiting constraint values.} Since 
$$
Q(\psi_{n,i}) \lesssim \|\psi_{n,i}\|_{H^1}^2   \|\psi_{n,i}\|_{L^2}^2,
$$
$Q(\psi_{n,1})$ and $Q(\psi_{n,2})$ are  bounded. 
Passing to a subsequence if necessary, we define
$$
\lambda_i(\varepsilon) := \lim_{n \to \infty} Q(\psi_{n,i}), 
\qquad i = 1,2.
$$
Since $\lambda_1(\varepsilon)$ and $\lambda_2(\varepsilon)$ are bounded independently of $\varepsilon$, we can select a sequence $\varepsilon_j \rightarrow 0$ such that the limits $\lambda_i = \lim_{j \rightarrow \infty} \lambda_i(\varepsilon_j)$ exist. Then the decomposition 
\eqref{eq:Q-final} implies
$$
\begin{aligned}
1 = Q(\psi_n)
   &= \lim_{j \to \infty} Q(\psi_{n,1})
   + \lim_{j \to \infty} Q(\psi_{n,2})
   + \lim_{j \to \infty} O(\varepsilon_j^{1/2})
   \\
   &= \lambda_1 + \lambda_2.
\end{aligned}
$$

\textbf{Step 4: Exclusion of the three cases.}
We distinguish the following three cases:
\begin{enumerate}[label=\arabic*.]
    \item $\lambda_1 \in (0,1)$;
    \item $\lambda_1 = 0$ (hence $\lambda_2 = 1$);
    \item $\lambda_1 > 1$ (hence $\lambda_2 < 0$).
\end{enumerate}

\textit{Case 1.}  
Assume $\lambda_1 \in (0,1)$. 
Using \eqref{energy decomposition}, we obtain
$$
\begin{aligned}
E(\psi_n)
&\geq I_{Q(\psi_{n,1})} + I_{Q(\psi_{n,2})} + O(\varepsilon_j)\\
&= \big((Q(\psi_{n,1}))^{1/2} + (Q(\psi_{n,2}))^{1/2}\big) I_1 + O(\varepsilon_j).
\end{aligned}
$$
We pass to the limit $n \to \infty$ to obtain
$$
I_1 \geq \bigl(\lambda_1(\varepsilon_j)^{1/2} + \lambda_2(\varepsilon_j)^{1/2}\bigr) I_1 + O(\varepsilon_j).
$$
Letting $j \to \infty$ yields
$$
\begin{aligned}
I_1 &\geq (\lambda_1^{1/2} + \lambda_2^{1/2}) I_1 
= I_{\lambda_1} + I_{\lambda_2} > I_{\lambda_1+\lambda_2} = I_1,
\end{aligned}
$$
which is a contradiction.

\textit{Case 2.}  
Assume $\lambda_1 = 0$.  
By the coercivity of $E$ and  dichotomy assumption, we have
$$
\begin{aligned}
E(\psi_{n,1})
&= \int_{\mathbb{R}} 
\left( \frac{1}{2} |\psi_{n,1}^\prime|^2 
- \frac{1}{2}\left(\omega + \frac{c^2}{4}\right) |\psi_{n,1}|^2 \right)dx \\
&\geq C_0 \int_{|x - y_n| \leq 2R} 
\left( |\psi_{n,1}^\prime|^2 + |\psi_{n,1}|^2 \right)dx 
+ O(\varepsilon_j) \\
&= C_0\bigl(\ell + O(\varepsilon_j)\bigr),
\end{aligned}
$$
where
$$
C_0 := \min\left\{ \frac{1}{2},   -\frac{1}{2}\left(\omega + \frac{c^2}{4}\right) \right\}.
$$
Hence,
$$
\begin{aligned}
E(\psi_n)
&= E(\psi_{n,1}) + E(\psi_{n,2}) + O(\varepsilon_j)\\
&\geq C_0\bigl(\ell + O(\varepsilon_j)\bigr)
   + Q(\psi_{n,2})^{1/2} I_1 + O(\varepsilon_j).
\end{aligned}
$$
Note that,
$$
\lim_{j \to \infty} \lim_{n \to \infty} Q(\psi_{n,2})
= \lim_{j \to \infty} \lambda_2(\varepsilon_j)
= \lambda_2 = 1,
$$
since $\lambda_1 = 0$.
Letting first $n \to \infty$ and then $j \to \infty$ gives
$$
I_1 \geq C_0 \ell + I_1 > I_1,
$$
which is a contradiction.  

\textit{Case 3.}  
Assume $\lambda_1 > 1$. 
Using the positivity of $E$, we find
$$
\begin{aligned}
E(\psi_n) 
&\geq E(\psi_{n,1}) + O(\varepsilon_j) \\
&\geq Q(\psi_{n,1})^{1/2} I_1 + O(\varepsilon_j),
\end{aligned}
$$
since $E(\psi_{n,2}) \geq 0$ and may be omitted.  
Letting first $n \to \infty$ and then $j \to \infty$ gives
$$
I_1 \geq \lambda_1^{1/2} I_1 > I_1,
$$
which is a contradiction.  
Hence, the dichotomy cannot occur.
\end{proof}

The next lemma provides the key compactness statement for minimizing sequences.

\begin{lemma}\label{lem:split-global}
Let $\{\psi_n\}$ be a minimizing sequence for \eqref{eq:Iq-def}. Then
\begin{equation}\label{eq: nonlocal}
\big\| |D|^{1/2}\Big(|\psi_n(\cdot+y_n)|^2-|\psi|^2\Big)\big\|_{L^2}\to 0
\end{equation}
up to a subsequence. 
\end{lemma}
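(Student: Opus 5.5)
Set $v_n:=\psi_n(\cdot+y_n)$, where $y_n$ and $\psi$ come from the compactness alternative in the proof of Theorem \ref{thm: existence}. By that stage we already know three facts. First, $v_n\rightharpoonup\psi$ in $H^1(\mathbb{R};\mathbb{C})$. Second, $v_n\to\psi$ in $L^2(\mathbb{R};\mathbb{C})$, which follows from \eqref{tail smallness}, \eqref{local L2 conv} and \eqref{H1 tail bound}. Third, $v_n\to\psi$ in $L^4$, and in fact in every $L^p$ with $2\le p\le\infty$: boundedness in $H^1$ together with Gagliardo--Nirenberg, $\|f\|_{L^\infty}\lesssim\|f\|_{L^2}^{1/2}\|f'\|_{L^2}^{1/2}$, interpolates strong $L^2$ convergence up to $L^\infty$.

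The plan is to avoid any localization and work through the interpolation
$$
\bigl\||D|^{1/2}g\bigr\|_{L^2}^2 \le \|g\|_{L^2}\,\|g'\|_{L^2},
$$
which follows from Plancherel and Cauchy--Schwarz on the Fourier side via $|\xi|\,|\hat g|^2=|\hat g|\cdot|\xi\hat g|$. We apply it to $g_n:=|v_n|^2-|\psi|^2$.

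For the $L^2$ factor, write $g_n=(|v_n|-|\psi|)(|v_n|+|\psi|)$. Then
$$
\|g_n\|_{L^2}\le \|v_n-\psi\|_{L^2}\bigl(\|v_n\|_{L^\infty}+\|\psi\|_{L^\infty}\bigr)\to0,
$$
because $\|v_n\|_{L^\infty}\lesssim\|v_n\|_{H^1}$ is uniformly bounded. For the derivative factor, $(|v_n|^2)'=2\operatorname{Re}(\bar v_n v_n')$, so
$$
\|g_n'\|_{L^2}\le 2\|v_n\|_{L^\infty}\|v_n'\|_{L^2}+2\|\psi\|_{L^\infty}\|\psi'\|_{L^2}\lesssim \sup_n\|v_n\|_{H^1}^2+\|\psi\|_{H^1}^2,
$$
which is bounded uniformly in $n$. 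The product therefore tends to zero, and this gives \eqref{eq: nonlocal}. The statement allows passing to a subsequence, and we use this only to inherit the subsequence along which $v_n$ converges.

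The only genuine step is obtaining strong $L^2$ convergence of $v_n$ with uniform $H^1$ bounds. That step already comes from the compactness alternative, once Lemmas \ref{vanishing lemma} and \ref{dichotomy lemma} have excluded the other cases. If one wanted to avoid relying on $L^\infty$, the fallback is to bound $\|g_n\|_{L^2}\le\|v_n-\psi\|_{L^4}\,\|\,|v_n|+|\psi|\,\|_{L^4}$ using \eqref{L4 conv}, which works equally well. With either route the $\dot H^{1/2}$ convergence of $|v_n|^2$ needs no fine analysis of the singular integral.
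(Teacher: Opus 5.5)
Your argument is correct, and it is more direct than the paper's. The paper splits $h_n=|\psi_n(\cdot+y_n)|^2-|\psi|^2$ with a cutoff $\chi_R^2$. The localized piece $\chi_R^2h_n$ is handled by a Kato--Ponce estimate, using local $H^{1/2}$ and $L^\infty$ convergence of $f_n=\psi_n(\cdot+y_n)-\psi$ on $B_{2R}$ deduced from \eqref{local L2 conv}. The tail piece $(1-\chi_R^2)h_n$ is controlled by exactly the interpolation $\||D|^{1/2}g\|_{L^2}^2\lesssim\|g\|_{L^2}\|g\|_{H^1}$ that you use, with smallness of $\|h_n\|_{L^2(B_R^c)}$ coming from \eqref{tail smallness}.

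You instead notice that these two ingredients, local convergence and uniform tail smallness, have already been combined into global strong $L^2$ convergence. This happens in the proof of Theorem \ref{thm: existence} before the lemma is invoked, so there is no circularity. A single application of the interpolation inequality to all of $h_n$ then suffices, and the localization and the Kato--Ponce step become unnecessary. You are also right to cite \eqref{tail smallness} alongside \eqref{local L2 conv} and \eqref{H1 tail bound} for the global $L^2$ convergence. The uniform tail bound on the translated sequence is genuinely needed there, and the paper's own sentence omits it.

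Your route is the one the paper itself uses in Section \ref{section 6} to prove \eqref{nonlocal conv.}. The paper's splitting only buys the ability to work from local convergence plus tail control without first assembling them into global convergence. Since that assembly is one line, nothing is lost by your simplification, and it is cleaner.
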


\begin{proof}
Since we have ruled out dichotomy and vanishing,
we have uniform smallness of the tails \eqref{tail smallness} and the strong local $L^2$
-convergence \eqref{local L2 conv}. To prove \eqref{eq: nonlocal}, we decompose $$\big\| |D|^{1/2}\Big(|\psi_n(\cdot+y_n)|^2-|\psi|^2\Big)\big\|_{L^2}$$ into a local part and a tail part. More precisely, we localize it using a smooth cutoff function and treat separately the part supported on a ball of fixed radius and the remainder outside it.

For convenience, we set
$$
h_n:=|\psi_n(\cdot+y_n)|^2-|\psi|^2.
$$
Let $\chi\in C_c^\infty(-2,2)$ with $\chi\equiv1$ on $(-1,1)$. Let $R>R(\varepsilon)$ be sufficiently large. We define $\chi_R(x):=\chi\left(x/R\right)$ and write
$$
h_n=\chi_R^2 h_n+(1-\chi_R^2)h_n.
$$
We treat the local and tail parts separately
\begin{equation}\label{eq:local-split}
\| |D|^{1/2}h_n\|_{L^2}
 \leq  \| |D|^{1/2}(\chi_R^2 h_n)\|_{L^2}
 + \| |D|^{1/2}\big((1-\chi_R^2)h_n\big)\|_{L^2}.
\end{equation}

\emph{Local part.}
We also split $h_n$ as follows
$$
h_n=\bar{\psi}_n f_n+\psi \bar{f}_n,
$$
where $f_n:=\psi_n-\psi$.
Applying the Kato-Ponce inequality, we obtain 
\begin{align*}
\| |D|^{1/2}(\chi_R^2 h_n)\|_{L^2}
&\lesssim \big(\|\chi_R\psi_n\|_{L^\infty}+\|\chi_R\psi\|_{L^\infty}\big) \|\chi_R f_n\|_{H^{1/2}}\\
&+ \big(\|\chi_R\psi_n\|_{H^{1/2}}+\|\chi_R\psi\|_{H^{1/2}}\big) \|\chi_R f_n\|_{L^\infty}.
\end{align*}
Let $B_{2R}:=(-2R, 2R)$. Since
$$\|\chi_R f_n\|_{H^{1/2}}
\lesssim\left\|f_n\right\|_{L^2\left(B_{2 R}\right)}^{1/2}\|f_n\|_{H^{1}(B_{2R})}^{1/2}\to 0$$
by \eqref{local L2 conv}, and
$$
\left\|\chi_R f_n\right\|_{L^{\infty}}\lesssim \|f_n\|_{L^2(B_{2R})}^{1/2} \|f_n\|_{H^1(B_{2R})}^{1/2}\to 0,
$$
it follows that
\begin{equation}\label{eq:local-term-to0}
\| |D|^{1/2}(\chi_R^2 h_n)\|_{L^2}\to 0.
\end{equation}

\emph{Tail part}.
By the interpolation,
\begin{equation*}\label{tail part on a ball}
\begin{aligned}
\left\| |D|^{1/2}\bigl((1-\chi_R^2) h_n\bigr)\right\|_{L^2}
\lesssim
\| h_n\|_{L^2(B_{R}^c)}^{1/2}
\left\|(1-\chi_R^2) h_n\right\|_{H^1}^{1/2},
\end{aligned}
\end{equation*}
where $B_{R}^c:=\mathbb{R} \backslash B_{R}$.
We show that the first term on the right-hand side can be made arbitrarily small, while the second one remains bounded. We start with the first term.
$$
\begin{aligned}
\left\|h_n\right\|_{L^2(B_{R}^c)} &\leq\left(\left\|\psi_n\right\|_{L^{\infty}}+\|\psi\|_{L^{\infty}}\right)\left\|f_n\right\|_{L^2(B_{R}^c)}\\
&\lesssim \left(\left\|\psi_n\right\|_{H^1}+\|\psi\|_{H^1}\right)\left(\left\|\psi_n\right\|_{L^2(B_{R}^c)}+\|\psi\|_{L^2(B_{R}^c)}\right).
\end{aligned}
$$
By translation invariance and  \eqref{tail smallness}, 
$$
\sup_n\|\psi_n\|_{L^2(B_{R}^c)} \leq \varepsilon^{1/2}.
$$
Since $\psi\in L^2(\mathbb{R};\mathbb{C})$, we can choose sufficiently large $R$ such that
$$\left\|\psi\right\|_{L^2(B_{R}^c)}\leq \varepsilon^{1/2}.$$
Hence,
$$\left\|h_n\right\|_{L^2(B_{R}^c)}^{1/2} \lesssim \varepsilon^{1/4}.$$
Now we turn to the second term. Using that $H^1(\mathbb{R};\mathbb{C})$ is an algebra, we have
$$
\|(1-\chi_R^2)h_n\|_{H^1}\ \lesssim\ 1.
$$
Therefore, the tail part satisfies
\begin{equation}\label{eq:nonlocal-term-to0-fixed}
\big\| |D|^{1/2}\big((1-\chi_R^2)h_n\big)\big\|_{L^2}\ \lesssim\ \varepsilon^{1/4}.
\end{equation}
Finally, combining \eqref{eq:local-split}--\eqref{eq:nonlocal-term-to0-fixed}, we deduce
\eqref{eq: nonlocal}.
\end{proof}

\section{Critical Minimization under a Local Quartic Constraint}\label{section 6}

Let $\omega, c \in \mathbb{R}$ satisfy $\omega + c^2/4=0$. Also, let $B<0$ and $\gamma \geq 0$. We define the action functional
$$
\begin{aligned}
E(\varphi) :&= \int_{\mathbb{R}} \bigg(
 \frac{1}{2} \left| \varphi^{\prime}\right|^2
 - \frac{\omega}{2} |\varphi|^2
+ \frac{c}{2}   \operatorname{Im}\left(\bar{\varphi}   \varphi^{\prime}\right)+\frac{B}{6}|\varphi|^6+ \frac{\gamma}{4}\left(|D|^{1/2}|\varphi|^2\right)^2
\bigg)  dx\\
&=\int_{\mathbb{R}} \Big(\frac{1}{2}|\psi^\prime|^2 + \frac{B}{6}|\varphi|^6+\frac{\gamma}{4}\left(|D|^{1/2}|\psi|^2\right)^2\Big)dx
\end{aligned}
$$
and constraint
$$
Q(\varphi) := \frac{1}{4} \int_{\mathbb{R}}|\varphi|^4dx= \frac{1}{4} \int_{\mathbb{R}}|\psi|^4dx.
  $$
  
Given $q>0$, we are interested in 
\begin{equation}\label{sextic+ nonlocal}
\begin{aligned}
I_q:&=\inf\{ E(\varphi):\ \varphi\in X_c\enspace \text{and}\enspace Q(\varphi)=q \}\\
&=\inf\{ E(\psi):\ \psi\in \dot{H}^1(\mathbb{R}; \mathbb{C})\cap L^4(\mathbb{R}; \mathbb{C})\enspace \text{and}\enspace Q(\psi)=q \}.
\end{aligned}
\end{equation}

We begin by deriving the scaling law for $I_q$.
\begin{lemma}
Let $\theta>0$ and $q>0$. Then
$$I_{\theta q} = q^{2}  I_{\theta}.$$
In particular,
$$
I_q = q^2 I_1.
$$
\end{lemma}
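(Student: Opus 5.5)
The plan is to mimic the proof of Lemma \ref{scaling lemma}, but with a two-parameter dilation instead of a pure amplitude scaling. In the critical setting $E$ contains three terms with different homogeneities: the kinetic term $\int|\psi'|^2$, the sextic term $\int|\psi|^6$, and the nonlocal quartic term $\int(|D|^{1/2}|\psi|^2)^2$. Multiplying by a constant alone therefore cannot scale $E$ uniformly. I will use
$$
\psi_{a,\lambda}(x):=a\,\psi(\lambda x),\qquad a,\lambda>0,
$$
and choose $(a,\lambda)$ so that every term of $E$ picks up the same factor.

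\textbf{Step 1: scaling of each term.} A change of variables gives
$$
\int_{\mathbb{R}}|\psi_{a,\lambda}'|^2dx=a^2\lambda\int_{\mathbb{R}}|\psi'|^2dx,\qquad
\int_{\mathbb{R}}|\psi_{a,\lambda}|^6dx=a^6\lambda^{-1}\int_{\mathbb{R}}|\psi|^6dx,
$$
$$
\int_{\mathbb{R}}|\psi_{a,\lambda}|^4dx=a^4\lambda^{-1}\int_{\mathbb{R}}|\psi|^4dx.
$$
For the nonlocal term, I will use the Fourier representation $\widehat{f(\lambda\cdot)}(\xi)=\lambda^{-1}\hat f(\xi/\lambda)$. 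This gives $|f(\lambda\cdot)|_{\dot H^{1/2}}^2=\lambda^{0}|f|_{\dot H^{1/2}}^2$, since $\dot H^{1/2}(\mathbb{R})$ is scale invariant. Applied to $f=|\psi|^2$, it yields
$$
\int_{\mathbb{R}}\bigl(|D|^{1/2}|\psi_{a,\lambda}|^2\bigr)^2dx=a^4\int_{\mathbb{R}}\bigl(|D|^{1/2}|\psi|^2\bigr)^2dx.
$$

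\textbf{Step 2: choice of parameters.} Matching $a^2\lambda=a^6\lambda^{-1}=a^4$ forces $\lambda=a^2$. With this choice, $E(\psi_{a,a^2})=a^4E(\psi)$ and $Q(\psi_{a,a^2})=a^2Q(\psi)$. Setting $a=q^{1/2}$, I define $\psi_q(x):=q^{1/2}\psi(qx)$, which satisfies
$$
E(\psi_q)=q^2E(\psi),\qquad Q(\psi_q)=q\,Q(\psi).
$$

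\textbf{Step 3: transfer of the infimum.} The map $\psi\mapsto\psi_q$ is a bijection of $\dot H^1(\mathbb{R};\mathbb{C})\cap L^4(\mathbb{R};\mathbb{C})$ onto itself, with inverse $\psi\mapsto q^{-1/2}\psi(\cdot/q)$, because both seminorms only get multiplied by finite positive constants. It therefore maps the constraint set $\{Q=\theta\}$ bijectively onto $\{Q=\theta q\}$. Exactly as in the proof of Lemma \ref{scaling lemma}, I conclude
$$
I_{\theta q}=\inf\{q^2E(\psi):Q(\psi)=\theta\}=q^2I_\theta,
$$
and taking $\theta=1$ gives $I_q=q^2I_1$.

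The only point needing care is the scale invariance of the $\dot H^{1/2}$ term, which is a one-line Fourier computation. The rest is bookkeeping. No sign information on $B$ or $\gamma$ is needed, since the identity holds term by term.
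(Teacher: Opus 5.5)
Your proposal is correct and matches the paper's proof. The paper uses exactly the dilation $\psi_q(x)=\sqrt{q}\,\psi(qx)$, verifies $E(\psi_q)=q^2E(\psi)$ and $Q(\psi_q)=q\,Q(\psi)$, and then transfers the infimum from $\{Q=\theta\}$ to $\{Q=\theta q\}$. Your exponent matching and your explicit checks of the $\dot H^{1/2}$ scale invariance and of the bijectivity of $\psi\mapsto\psi_q$ fill in details the paper leaves implicit.
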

\begin{proof}
Let $\psi$ be a test function for $I_\theta$ with $\theta>0$. Also define $\psi_q(x) := \sqrt{q} \psi(qx)$ for $q>0$.
Then  
$$E\left(\psi_q\right)=q^2 E(\psi) \quad \text{and}\quad Q(\psi_q)=qQ(\psi)=\theta q.$$
Therefore,
$$
\begin{aligned}
 I_{\theta q}
&= \inf \{ E(\psi_q) :   \psi_q\in\dot{H}^1(\mathbb{R}; \mathbb{C})\cap L^4(\mathbb{R}; \mathbb{C}) \enspace \text{and} \enspace Q(\psi_q) = \theta q \}\\
&= \inf \{ q^{2} E(\psi) :  \psi\in\dot{H}^1(\mathbb{R}; \mathbb{C})\cap L^4(\mathbb{R}; \mathbb{C}) \enspace \text{and} \enspace Q(\psi) = \theta \}
\\
&= q^{2} I_\theta.
\end{aligned}
$$
Setting $\theta=1$ yields
$I_q = q^2 I_1$.
\end{proof}

The following lemma shows that $Q$ and the sextic term in $E$ are well-defined.

\begin{lemma}\label{lem: massfree}
Let $\psi\in \dot{H}^1(\mathbb{R}; \mathbb{C})\cap L^4(\mathbb{R}; \mathbb{C})$. Then we have
\begin{equation}\label{sextic term bound}
\int_{\mathbb{R}}|\psi|^6dx  \leq C  \|\psi\|_{L^4}^{16/3}  |\psi|_{\dot{H}^1}^{2/3}
\end{equation}
and 
\begin{equation}\label{nonlocal bound}
 \big\| |D|^{1/2}(|\psi|^2)\big\|_{L^2}^2  \leq C  \|\psi\|_{L^4}^{8/3}  |\psi|_{\dot{H}^1}^{4/3}.   
\end{equation}
\end{lemma}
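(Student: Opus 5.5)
For \eqref{sextic term bound}, the plan is to combine an $L^\infty$ bound of Gagliardo--Nirenberg type with the trivial estimate $\int|\psi|^6\le \|\psi\|_{L^\infty}^2\|\psi\|_{L^4}^4$. The $L^\infty$ bound will not use $L^2$, since $\psi$ need not be square integrable. Because $\psi\in L^4$ and $\psi'\in L^2$, the function $\psi$ is (after modification on a null set) continuous and tends to $0$ at infinity along a sequence of points. For $|\psi|^3$ we have
\[
\bigl(|\psi|^3\bigr)'=3|\psi|\,|\psi|\,\operatorname{Re}(\bar\psi\psi'/|\psi|)\quad\text{a.e.},\qquad \bigl|(|\psi|^3)'\bigr|\le 3|\psi|^2|\psi'|.
\]
Integrating from $-\infty$ to $x$ and using Cauchy--Schwarz gives
\[
|\psi(x)|^3\le 3\int_{\mathbb R}|\psi|^2|\psi'|\,dx\le 3\|\psi\|_{L^4}^2|\psi|_{\dot H^1}.
\]
Hence $\|\psi\|_{L^\infty}\lesssim \|\psi\|_{L^4}^{2/3}|\psi|_{\dot H^1}^{1/3}$. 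Inserting this into the trivial estimate yields $\int|\psi|^6\lesssim \|\psi\|_{L^4}^{4+4/3}|\psi|_{\dot H^1}^{2/3}=\|\psi\|_{L^4}^{16/3}|\psi|_{\dot H^1}^{2/3}$, which is \eqref{sextic term bound}.

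The rigor of this step is where some care is needed. To justify the vanishing at infinity and the fundamental theorem of calculus, I would first prove the inequality for $\psi\in C_c^\infty$. I would then pass to general $\psi$ by density of $C_c^\infty$ in $\dot H^1\cap L^4$, approximating by cutoffs $\chi(x/R)\psi$ followed by mollification. The cutoff error in $\dot H^1$ is controlled by
\[
R^{-1}\|\chi'\|_{L^\infty}\|\psi\|_{L^2(R<|x|<2R)}\le R^{-1}\|\psi\|_{L^4(R<|x|<2R)}R^{1/4}\to0.
\]
Both sides of the inequality are continuous under this convergence (Fatou suffices for the left side).

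For \eqref{nonlocal bound}, the plan is to interpolate $\dot H^{1/2}$ between $L^2$ and $\dot H^1$ for $f=|\psi|^2$: by Plancherel and Cauchy--Schwarz in frequency, $\||D|^{1/2}f\|_{L^2}^2\le \|f\|_{L^2}\,|f|_{\dot H^1}$. Here $\|f\|_{L^2}=\|\psi\|_{L^4}^2$, and since $|f'|\le 2|\psi||\psi'|$,
\[
|f|_{\dot H^1}\le 2\|\psi\|_{L^\infty}|\psi|_{\dot H^1}\lesssim \|\psi\|_{L^4}^{2/3}|\psi|_{\dot H^1}^{4/3}
\]
by the $L^\infty$ bound above. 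Multiplying gives $\|\psi\|_{L^4}^{8/3}|\psi|_{\dot H^1}^{4/3}$, as claimed.

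The main technical point is that $f\in L^2\cap\dot H^1$, so $\hat f\in L^2$ and the interpolation is legitimate. Once the $L^\infty$ estimate is in hand this is immediate: $f'=2\operatorname{Re}(\bar\psi\psi')\in L^2$. The genuine obstacle is therefore the density/continuity argument behind the $L^\infty$ bound in the absence of an $L^2$ assumption.
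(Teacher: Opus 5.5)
Your proof is correct. For \eqref{sextic term bound} it is the paper's argument: the paper also uses $\int|\psi|^6\le\|\psi\|_{L^\infty}^2\|\psi\|_{L^4}^4$ together with $\|\psi\|_{L^\infty}\lesssim|\psi|_{\dot H^1}^{1/3}\|\psi\|_{L^4}^{2/3}$. The difference is that the paper quotes this $L^\infty$ bound, while you prove it from $|(|\psi|^3)'|\le 3|\psi|^2|\psi'|$.

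For \eqref{nonlocal bound} you take a genuinely different route. The paper uses two quoted estimates: the Kato--Ponce (fractional Leibniz) bound $\||D|^{1/2}(|\psi|^2)\|_{L^2}\lesssim\|\psi\|_{L^4}\||D|^{1/2}\psi\|_{L^4}$, and the fractional Gagliardo--Nirenberg inequality $\||D|^{1/2}\psi\|_{L^4}\lesssim|\psi|_{\dot H^1}^{2/3}\|\psi\|_{L^4}^{1/3}$. You work only with $f=|\psi|^2\in H^1$. You interpolate $\||D|^{1/2}f\|_{L^2}^2\le\|f\|_{L^2}|f|_{\dot H^1}$ by Plancherel and Cauchy--Schwarz, use $|f'|\le 2|\psi||\psi'|$, and reuse your $L^\infty$ bound.

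Your version is elementary and self-contained. It also never has to give meaning to $|D|^{1/2}\psi$ for a $\psi$ that need not lie in $L^2$, which the paper's route tacitly requires. It is the same $\dot H^{1/2}$ interpolation the paper itself uses later for $h_n$ in the proof of Theorem~\ref{thm: fixed quartic term}. The paper's version is shorter only because it cites heavier harmonic-analysis tools; it gains nothing here that your argument lacks.

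The density step you flag as the main obstacle can be dropped. Since $\psi\in H^1_{\mathrm{loc}}$, it is locally absolutely continuous. By Cauchy--Schwarz, $(|\psi|^3)'\in L^1$, so $|\psi|^3$ has limits at $\pm\infty$, and these limits vanish because $\psi\in L^4$. The fundamental theorem of calculus then applies directly to $\psi$ itself.
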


\begin{proof}
We combine
$$
\int_{\mathbb{R}}|\psi|^6dx \leq \|\psi\|_{L^\infty}^{2}\|\psi\|_{L^4}^{4}
$$
with
$$
\|\psi\|_{L^\infty} \leq C  |\psi|_{\dot{H}^1}^{1/3} \|\psi\|_{L^4}^{2/3}
$$
to derive \eqref{sextic term bound}. For the nonlocal term, we combine the Kato-Ponce inequality
$$
\big\| |D|^{1/2}(|\psi|^2)\big\|_{L^2}
 \leq C 
\|\psi\|_{L^4} \big\| |D|^{1/2}\psi\big\|_{L^4}
$$
with the fractional Gagliardo--Nirenberg inequality 
$$
\big\| |D|^{1/2}\psi\big\|_{L^4}
 \leq C  |\psi|_{\dot{H}^1}^{2/3} \|\psi\|_{L^4}^{1/3}
$$
to obtain \eqref{nonlocal bound}.
\end{proof}
Then we show that the minimization problem \eqref{sextic+ nonlocal} is well posed.
\begin{corollary}
$-\infty<I_q<0$.
\end{corollary}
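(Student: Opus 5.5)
The statement to prove is $-\infty<I_q<0$, where $B<0$ and $\gamma\ge 0$. I would handle the two inequalities separately. The lower bound comes from the interpolation estimates of Lemma \ref{lem: massfree}. The upper bound comes from a scaling of a fixed test function that preserves the $L^4$ constraint. Throughout, I work with $\psi\in\dot H^1(\mathbb{R};\mathbb{C})\cap L^4(\mathbb{R};\mathbb{C})$ and the constraint $\|\psi\|_{L^4}^4=4q$.

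\emph{Lower bound.} Since $\gamma\ge0$, the nonlocal term $\frac{\gamma}{4}\||D|^{1/2}|\psi|^2\|_{L^2}^2$ is nonnegative, so I drop it. For the sextic term I use \eqref{sextic term bound}. Writing $a:=|\psi|_{\dot H^1}$ and using the constraint, this gives
\begin{equation*}
E(\psi)\ \ge\ \tfrac12 a^2-\tfrac{|B|}{6}C\,(4q)^{4/3}a^{2/3}=:f(a).
\end{equation*}
Because $2/3<2$, the function $f$ is bounded below on $[0,\infty)$. Explicitly, Young's inequality gives $K a^{2/3}\le \tfrac14 a^2+C'K^{3/2}$. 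Hence $I_q\ge -C(B,q)>-\infty$. Lemma \ref{lem: massfree} also guarantees that every term of $E$ is finite on the admissible class, so $E$ is well defined there.

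\emph{Upper bound.} I fix any nonzero Schwartz function $\psi$, rescaled in amplitude so that $Q(\psi)=q$. I then consider the family $\psi_\lambda(x):=\lambda^{1/4}\psi(\lambda x)$ with $\lambda>0$, which keeps $\int|\psi_\lambda|^4dx=\int|\psi|^4dx$ and so stays admissible. A direct change of variables gives $|\psi_\lambda|_{\dot H^1}^2=\lambda^{3/2}|\psi|_{\dot H^1}^2$ and $\|\psi_\lambda\|_{L^6}^6=\lambda^{1/2}\|\psi\|_{L^6}^6$. For the nonlocal term, $|\psi_\lambda|^2=\lambda^{1/2}(|\psi|^2)(\lambda\cdot)$, and $\|g(\lambda\cdot)\|_{\dot H^{1/2}}^2=\|g\|_{\dot H^{1/2}}^2$ in one dimension, so this term scales like $\lambda$. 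Therefore
\begin{equation*}
E(\psi_\lambda)=\tfrac{\lambda^{3/2}}{2}|\psi|_{\dot H^1}^2+\lambda^{1/2}\tfrac{B}{6}\|\psi\|_{L^6}^6+\lambda\tfrac{\gamma}{4}\||D|^{1/2}|\psi|^2\|_{L^2}^2 .
\end{equation*}
As $\lambda\to0^+$, the $\lambda^{1/2}$ term dominates, and it is strictly negative because $B<0$ and $\psi\ne0$. Hence $E(\psi_\lambda)<0$ for small $\lambda$, which gives $I_q<0$.

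The only point needing care is the exponent bookkeeping in the scaling, in particular the invariance of the $\dot H^{1/2}$ seminorm under dilation in one dimension, which makes the nonlocal term scale like $\lambda$. Its sign does not matter in any case, since $\lambda^{1/2}$ dominates both $\lambda$ and $\lambda^{3/2}$ as $\lambda\to0^+$. I expect no real obstacle.
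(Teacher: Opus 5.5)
Your proposal is correct and follows essentially the same route as the paper. The paper also gets the lower bound from the sextic estimate \eqref{sextic term bound}, dropping the nonnegative nonlocal term and applying Young's inequality to obtain $I_q\ge -Cq^2$, and it gets the upper bound from the $L^4$-preserving dilation $\psi_\lambda(x)=\lambda^{1/4}\psi(\lambda x)$ with $\lambda\to0^+$; your exponent bookkeeping ($\lambda^{3/2}$, $\lambda^{1/2}$, $\lambda$) simply makes explicit what the paper leaves implicit.
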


\begin{proof}
By Lemma \ref{lem: massfree},
$$
E(\psi)\ \ge\ \frac{1}{2} |\psi|_{\dot{H}^1}^2 - C q^{4/3} |\psi|_{\dot{H}^1}^{2/3}.
$$
Applying Young’s inequality to the  last   term, we derive an upper bound, 
$$
E(\psi)\ \geq  \Big(\frac{1}{2} - 2\varepsilon\Big) |\psi|_{\dot{H}^1}^2 - C_\varepsilon q^2
\ \ge\ -C_\varepsilon q^2,
$$
which implies $I_q>-\infty$. Under the scaling $\psi_\lambda(x)=\lambda^{1/4}\psi(\lambda x)$, we see that  $Q(\psi_\lambda)=q$ and
$$
E(\psi_\lambda) <0 \quad \text{for sufficiently small }\lambda>0.
$$
Therefore, $I_q<0$.
\end{proof}
The following lemma establishes the strict subadditivity.
\begin{lemma}
For every $q\in(0,1)$,
$$
I_1<I_q+I_{1-q}.
$$
\end{lemma}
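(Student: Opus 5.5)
The strict subadditivity follows directly from the scaling law $I_q=q^2I_1$ and the strict negativity $I_1<0$ from the preceding corollary. No concentration-compactness input is needed.

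\textbf{Step 1: rewrite both sides.} By the scaling lemma, for every $q\in(0,1)$,
$$
I_q+I_{1-q}=\bigl(q^2+(1-q)^2\bigr)I_1 .
$$
The claim $I_1<I_q+I_{1-q}$ is therefore equivalent to
$$
\bigl(1-q^2-(1-q)^2\bigr)I_1<0 .
$$

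\textbf{Step 2: check the sign.} We have
$$
1-q^2-(1-q)^2=2q(1-q)>0 \quad\text{for } q\in(0,1).
$$
Combined with $-\infty<I_1<0$, the product $2q(1-q)\,I_1$ is strictly negative, which is exactly the required inequality.

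\textbf{Step 3: justify the scaling law.} The only point needing care is the scaling identity itself, specifically that $\psi_q(x)=\sqrt q\,\psi(qx)$ scales each term of $E$ by the same factor $q^2$ and $Q$ by $q$. The computation is:
\begin{itemize}
\item the kinetic term scales as $\int|\psi_q'|^2=q^3\cdot q^{-1}\int|\psi'|^2=q^2\int|\psi'|^2$;
\item the sextic term scales as $\int|\psi_q|^6=q^3\cdot q^{-1}\int|\psi|^6=q^2\int|\psi|^6$;
\item the nonlocal term satisfies $\||D|^{1/2}(q|\psi(q\cdot)|^2)\|_{L^2}^2=q^2\cdot q\cdot q^{-1}\||D|^{1/2}|\psi|^2\|_{L^2}^2=q^2\||D|^{1/2}|\psi|^2\|_{L^2}^2$, using that $|D|^{1/2}$ is homogeneous of degree $1/2$;
\item the constraint scales as $\int|\psi_q|^4=q^2\cdot q^{-1}\int|\psi|^4=q\int|\psi|^4$.
\end{itemize}
All terms of $E$ therefore scale uniformly, and $\psi\mapsto\psi_q$ is a bijection of $\dot H^1\cap L^4$, so the lemma applies as stated.

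The strictness in the conclusion comes entirely from $I_1<0$, so the main (mild) obstacle is confirming that the earlier corollary indeed gives strict negativity. This holds because the dilation $\psi_\lambda=\lambda^{1/4}\psi(\lambda\cdot)$ with $\lambda\to0$ makes the kinetic part $O(\lambda^{2})$. Since $B<0$, the sextic part is negative and of order $\lambda^{1/2}$, so it dominates the other terms, while the nonlocal part is of order $\lambda$. With that input the argument above is complete.
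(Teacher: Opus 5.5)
Your proof is correct and is essentially the paper's argument: the scaling law $I_q=q^2I_1$, combined with $-\infty<I_1<0$ and $q^2+(1-q)^2<1$ for $q\in(0,1)$. One small slip in your closing remark: under $\psi_\lambda=\lambda^{1/4}\psi(\lambda\cdot)$ the kinetic term scales like $\lambda^{3/2}$, not $\lambda^{2}$; the negative sextic term, of order $\lambda^{1/2}$, still dominates as $\lambda\to0$, so the conclusion $I_1<0$ is unaffected.
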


\begin{proof}
Since
$$
q^2+(1-q)^2<1
\qquad\text{for } q\in(0,1),
$$
and $I_1<0$, it follows that
\begin{equation}\label{H1L4 subadditivity}
I_q+I_{1-q}>I_1.
\end{equation}
\end{proof}

\begin{lemma}\label{lem:bound-nonvanish.}
Let $\{\psi_n\}$ be a minimizing sequence for $I_q$. Then there exist $\Lambda=\Lambda(q)>0$ and $\delta>0$ such that
\begin{equation}\label{dot H1 bound}
|\psi_n|_{\dot{H}^1} \leq \Lambda
\quad\text{for all } n,
\end{equation}
and
\begin{equation}\label{lower bound L6}
\|\psi_n\|_{L^6} \geq \delta
\quad\text{for all sufficiently large } n.
\end{equation}
\end{lemma}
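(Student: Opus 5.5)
The plan is to derive both statements from the coercive lower bound already used in the proof of the corollary $-\infty<I_q<0$, together with the sign structure $B<0$, $\gamma\ge 0$. Throughout, $\{\psi_n\}$ is a minimizing sequence, so $Q(\psi_n)=q$, i.e.\ $\|\psi_n\|_{L^4}^4=4q$, and $E(\psi_n)\to I_q$. We may assume $E(\psi_n)\le I_q/2<0$ for all large $n$, since $I_q<0$.

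\textbf{Step 1: the $\dot H^1$ bound \eqref{dot H1 bound}.} Since $\gamma\ge0$, the nonlocal term in $E$ is nonnegative and can be dropped from below. Using \eqref{sextic term bound} with $\|\psi_n\|_{L^4}^4=4q$ gives
$$
E(\psi_n)\ \ge\ \tfrac12|\psi_n|_{\dot H^1}^2-\tfrac{|B|}{6}C(4q)^{4/3}|\psi_n|_{\dot H^1}^{2/3}.
$$
Young's inequality with exponents $3$ and $3/2$ bounds the negative term by $\tfrac14|\psi_n|_{\dot H^1}^2+C'q^2$. Hence $\tfrac14|\psi_n|_{\dot H^1}^2\le E(\psi_n)+C'q^2\le \sup_n E(\psi_n)+C'q^2$. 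The right-hand side is finite because $E(\psi_n)$ converges. This yields $\Lambda=\Lambda(q)$; for the finitely many early indices we simply enlarge $\Lambda$.

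\textbf{Step 2: the lower bound \eqref{lower bound L6}.} Again drop the nonnegative terms $\tfrac12|\psi_n|_{\dot H^1}^2$ and $\tfrac{\gamma}{4}\||D|^{1/2}|\psi_n|^2\|_{L^2}^2$ from $E$. For large $n$ this gives
$$
\tfrac{I_q}{2}\ \ge\ E(\psi_n)\ \ge\ \tfrac{B}{6}\|\psi_n\|_{L^6}^6=-\tfrac{|B|}{6}\|\psi_n\|_{L^6}^6 .
$$
Therefore $\|\psi_n\|_{L^6}^6\ge 3|I_q|/|B|$, and we take $\delta=(3|I_q|/|B|)^{1/6}>0$. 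This step uses only $B<0$ (so that the sextic term is the sole negative contribution) and $I_q<0$ from the corollary.

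\textbf{Main obstacle.} There is no real obstacle. The only delicate point is to get the signs right: the nonlocal term must be discarded from below in both steps, which requires $\gamma\ge0$. In addition, strict negativity of $I_q$ is essential for the nonvanishing bound, and that is exactly what the corollary provides.
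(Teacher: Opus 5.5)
Your proposal is correct and follows the paper's proof. Step 1 is the paper's combination of \eqref{sextic term bound} with $\|\psi_n\|_{L^4}^4=4q$, the sign $\gamma\ge 0$, and Young's inequality; Step 2 is the direct form of the paper's contradiction argument, using $E(\psi_n)\ge \frac{B}{6}\|\psi_n\|_{L^6}^6$ together with $I_q<0$, and it has the small advantage of giving the explicit constant $\delta=(3|I_q|/|B|)^{1/6}$.
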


\begin{proof} 
Applying Lemma \ref{lem: massfree} and Young’s inequality, we obtain
$$
\begin{aligned}
|\psi_n|_{\dot{H}^1}^2
&\lesssim
E(\psi_n)+\|\psi_n\|_{L^6}^6\\
&\lesssim 
\sup_n E(\psi_n) + \varepsilon |\psi_n|_{\dot{H}^1}^2 + C_\varepsilon q^2.
\end{aligned}
$$
Choosing $\varepsilon>0$ sufficiently small, we have
$$
|\psi_n|_{\dot{H}^1}^2  \lesssim   \sup_n E(\psi_n)+q^2,
$$
which ensures \eqref{dot H1 bound}. 

We prove \eqref{lower bound L6} by contradiction.  
Assume that no constant $\delta>0$ satisfies the lower bound \eqref{lower bound L6}.  Then
$$
\liminf_{n\to\infty}\int_{\mathbb{R}}|\psi_n|^6 dx \leq 0.
$$
In this case, the negative term in  $E$ becomes negligible as $n\to\infty$. Consequently,
$$
I_q
  = \lim_{n\to\infty}E(\psi_n)
\geq \liminf_{n\to\infty}\left(\frac{B}{6}\int_{\mathbb{R}} |\psi_n|^6 dx\right)
  \geq 0.
$$
This contradicts $I_q<0$.  
Hence, there exists $\delta>0$ such that \eqref{lower bound L6} holds.
\end{proof}

\begin{lemma}
Let $\psi$ be a minimizer for \eqref{sextic+ nonlocal}. Then its symmetric decreasing
rearrangement $\psi^*$ is also a minimizer. Consequently, a minimizer
may be chosen nonnegative, even, and nonincreasing on $[0,\infty)$.
\end{lemma}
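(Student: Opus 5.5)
The plan is to show $Q(\psi^*)=Q(\psi)$ and $E(\psi^*)\le E(\psi)$. Since $\psi^*$ is admissible, this gives $I_q\le E(\psi^*)\le E(\psi)=I_q$, so $\psi^*$ is a minimizer.

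\textbf{Step 1: reduction to $|\psi|$.} Replacing $\psi$ by $|\psi|$ leaves $Q$, the sextic term and the nonlocal term unchanged, since these depend only on $|\psi|$. The diamagnetic inequality gives $\bigl||\psi|'\bigr|\le|\psi'|$ a.e., so $|\psi|\in\dot H^1\cap L^4$ and $E(|\psi|)\le E(\psi)$. Thus we may assume $\psi\ge0$. One small point: $\dot H^1\cap L^4$ functions vanish at infinity, because $\psi\in L^4$ is uniformly continuous, being $C^{1/2}$ by $\dot H^1$. Hence the level sets $\{\psi>t\}$, $t>0$, have finite measure and the Schwarz symmetrization $\psi^*$ is well defined.

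\textbf{Step 2: the local terms.} Equimeasurability gives $\|\psi^*\|_{L^4}=\|\psi\|_{L^4}$ and $\|\psi^*\|_{L^6}=\|\psi\|_{L^6}$. Hence $Q(\psi^*)=q$, and the term $\frac B6\int|\psi|^6$ is unchanged. The P\'olya--Szeg\H{o} inequality (cf.\ \cite{Lieb2001}) gives $|\psi^*|_{\dot H^1}\le|\psi|_{\dot H^1}$. The standard proof is for $H^1$ or $W^{1,p}$ functions vanishing at infinity. If needed, I would apply it to the truncations $(\psi-t)_+\in H^1$, which have finite-measure support, use $((\psi-t)_+)^*=(\psi^*-t)_+$, and let $t\downarrow0$ by monotone convergence.

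\textbf{Step 3: the nonlocal term (main obstacle).} Since $\gamma\ge0$, it suffices to show
\[
\bigl\||D|^{1/2}(\psi^*)^2\bigr\|_{L^2}\le\bigl\||D|^{1/2}\psi^2\bigr\|_{L^2}.
\]
Set $f=\psi^2$. Then $f^*=(\psi^*)^2$, because squaring is increasing on $[0,\infty)$. The quantity $\||D|^{1/2}f\|_{L^2}^2$ equals, up to a constant, the Gagliardo seminorm
\[
\iint\frac{|f(x)-f(y)|^2}{|x-y|^2}\,dx\,dy .
\]
This seminorm is finite by \eqref{nonlocal bound}. It does not increase under symmetric decreasing rearrangement, by the fractional P\'olya--Szeg\H{o} inequality (Almgren--Lieb), which holds for nonnegative functions vanishing at infinity. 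A self-contained route is the identity
\[
|a-b|^2=a^2+b^2-2ab .
\]
Applied with a cutoff $|x-y|>\epsilon$ to $f_t=(f-t)_+$, it reduces the claim to the Riesz rearrangement inequality
\[
\iint f_t(x)\,f_t(y)\,k_\epsilon(x-y)\,dx\,dy\le \iint f_t^*(x)\,f_t^*(y)\,k_\epsilon(x-y)\,dx\,dy
\]
for the symmetric decreasing kernel $k_\epsilon=\min(|z|^{-2},\epsilon^{-2})$. The diagonal terms cancel because the $L^2$ norms are preserved. One then lets $\epsilon\to0$ and $t\to0$.

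This approximation is the delicate part. I would first truncate at level $t$ so that all the integrals are finite, and then pass to the limit using monotone convergence of the kernels.

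Combining Steps 1--3 gives $E(\psi^*)\le E(\psi)$ with $Q(\psi^*)=q$, so $\psi^*$ is a minimizer. It is nonnegative, even, and nonincreasing on $[0,\infty)$ by construction.
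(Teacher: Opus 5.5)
Your proposal is correct and follows essentially the same route as the paper: equimeasurability for the $L^4$ and $L^6$ terms, the classical P\'olya--Szeg\H{o} inequality for the $\dot H^1$ seminorm, and the fractional P\'olya--Szeg\H{o} inequality applied to $f=|\psi|^2$, using $(|\psi|^2)^*=|\psi^*|^2$ and $\gamma\ge0$. Your extra steps (the diamagnetic reduction to $|\psi|$, the vanishing at infinity of $\dot H^1\cap L^4$ functions, and the Riesz-rearrangement derivation with the kernel $k_\epsilon$) only make explicit what the paper leaves implicit; the level-$t$ truncation there is not actually needed, since $\psi\in L^4$ already gives $f\in L^2$, so every integral against the integrable kernel $k_\epsilon$ is finite.
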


\begin{proof}
For any $\psi\in \dot{H}^1(\mathbb{R}; \mathbb{C})\cap L^4(\mathbb{R}; \mathbb{C})$, let $\psi^*$ denote the symmetric decreasing
rearrangement of $|\psi|$.
By the classical Pólya--Szegő inequality,
$$
\int_{\mathbb{R}} |{\psi^*}^{\prime}|^2dx
   \leq \int_{\mathbb{R}} | \psi^{\prime}|^2dx.
$$
Moreover, by the fractional Pólya--Szegő inequality (see e.g. \cite[Lemma 8.15]{Pava2009book}) and using the fact that $\left(|\psi|^2\right)^*=\left|\psi^*\right|^2$, we have
$$
\int_{\mathbb{R}}
   \big(|D|^{1/2}(|\psi^*|^2)\big)^2dx
   \le
   \int_{\mathbb{R}}
   \big(|D|^{1/2}(|\psi|^2)\big)^2dx.
$$
Therefore,
$$
E(\psi^*) \leq E(\psi)
\quad \text{and} \quad
Q(\psi^*) = Q(\psi).
$$
\end{proof}

Now let $\{\psi_n\}$ be a minimizing sequence for \eqref{sextic+ nonlocal}. By Lemma \ref{lem:bound-nonvanish.}, 
$$
\rho_n:=|\psi_n'|^2+|\psi_n|^4
$$
is bounded in $L^1(\mathbb{R};\mathbb{R})$.
Then
$$
L:=\lim_{n\to\infty}\int_{\mathbb{R}}\rho_n dx,
$$
up to a subsequence. By normalizing $\rho_n$ and relabeling the resulting sequence, we may further assume that
$$
\int_{\mathbb{R}} \rho_n(x) dx = L 
\qquad \text{for all } n\in\mathbb{N}.
$$

Note that, in Theorem \ref{thm: fixed quartic term}, we have $\psi \in \dot{H}^1(\mathbb{R}; \mathbb{C})\cap L^4(\mathbb{R}; \mathbb{C})$. Therefore, for each fixed $t \in \mathbb{R}$, the function $u(\cdot,t)$ defined by \eqref{all transf. crit.} satisfies
$$
u(\cdot, t) \in \begin{cases}\dot{H}^1(\mathbb{R}; \mathbb{C})\cap L^4(\mathbb{R}; \mathbb{C}), \quad& c=0, \\ H_{\mathrm{loc}}^1(\mathbb{R}; \mathbb{C}) \cap L^4(\mathbb{R}; \mathbb{C}), & c \neq 0,\end{cases}
$$
and solves \eqref{nonlocal DNLS}.

\begin{theorem}\label{thm: fixed quartic term}
Let $\{\psi_n\}$ be a minimizing sequence for  \eqref{sextic+ nonlocal}. Then there exist     $\psi \in \dot{H}^1(\mathbb{R}; \mathbb{C})\cap L^4(\mathbb{R}; \mathbb{C})$  and $\{y_n\} \subset \mathbb{R}$  such that
$$
\psi_n( \cdot + y_n ) \to \psi
\quad \text{strongly in } \dot{H}^1(\mathbb{R}; \mathbb{C})\cap L^4(\mathbb{R}; \mathbb{C})
$$
up to a subsequence. Moreover, $\psi$ is a minimizer for \eqref{sextic+ nonlocal}. Hence, $\psi$ solves
\begin{equation*}
-\psi^{\prime \prime}+ A|\psi|^2 \psi+B|\psi|^4 \psi+\gamma \psi\left(\mathcal{H}\left(|\psi|^2\right)\right)^{\prime}=0\qquad \text{in }\dot{H}^1(\mathbb{R}; \mathbb{C})\cap L^4(\mathbb{R}; \mathbb{C}),
\end{equation*}
where $ A=-\mu$ and  $\mu=\mu(B, \gamma, q)\in \mathbb{R}$ is the Lagrange multiplier. 
\end{theorem}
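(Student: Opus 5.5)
The plan is to run Lions' concentration--compactness scheme of Section \ref{section 5}, now in the homogeneous space $\dot H^1\cap L^4$. The sequence is
$$\rho_n=|\psi_n'|^2+|\psi_n|^4,\qquad \int_{\mathbb{R}}\rho_n\,dx=L,$$
which is admissible by Lemma \ref{lem:bound-nonvanish.}. We have $L>0$ because $Q(\psi_n)=q>0$. By Lemma \ref{lem:CC} it suffices to exclude vanishing and dichotomy. In the compactness case we extract the limit, pass to the constraint, and identify the Lagrange multiplier.

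\textbf{Vanishing.} Suppose $\sup_y\int_{|x-y|\le R}\rho_n\to0$. Cover $\mathbb{R}$ by the unit intervals $J_k$ and use the local inequality $\|\psi\|_{L^\infty(J_k)}^2\lesssim \|\psi\|_{L^4(J_k)}^2+\|\psi'\|_{L^2(J_k)}\|\psi\|_{L^4(J_k)}$. This gives
$$\int_{\mathbb{R}}|\psi_n|^6\,dx\le\sup_k\|\psi_n\|_{L^\infty(J_k)}^2\,\|\psi_n\|_{L^4}^4\to0 .$$
The nonlocal term and $\tfrac12|\psi_n|_{\dot H^1}^2$ are nonnegative because $\gamma\ge0$. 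Hence $\liminf E(\psi_n)\ge0$, which contradicts $I_q<0$ from the Corollary. This is the same mechanism as in the proof of \eqref{lower bound L6}.

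\textbf{Dichotomy.} We repeat the cutoff decomposition $\psi_n=\psi_{n,1}+\psi_{n,2}+r_n$ from the proof of Lemma \ref{dichotomy lemma}. Locality gives
$$Q(\psi_n)=Q(\psi_{n,1})+Q(\psi_{n,2})+O(\varepsilon).$$
The gradient and sextic parts of $E$ split the same way. The cutoff error in the gradient is controlled by $R^{-1}\|\psi_n\|_{L^2(\text{annulus})}$, and $\|\psi_n\|_{L^2(\text{annulus})}\lesssim R^{1/2}\|\psi_n\|_{L^4}$. For the nonlocal term it suffices to have a lower bound. Since $\gamma\ge0$ and $\psi_{n,1},\psi_{n,2}$ have disjoint supports, the cross term
$$\int_{\mathbb{R}}|D|^{1/2}|\psi_{n,1}|^2\,|D|^{1/2}|\psi_{n,2}|^2\,dx$$
has a sign. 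It is $-c\iint|\psi_{n,1}(x)|^2|\psi_{n,2}(y)|^2|x-y|^{-2}\,dx\,dy$, which is small because the separation $d_n\to\infty$ and both $|\psi_{n,i}|^2$ are controlled in $L^2$ by $\|\psi_n\|_{L^4}^2$. The remainder terms are bounded with Lemma \ref{lem: massfree}. Together these give
$$E(\psi_n)\ge E(\psi_{n,1})+E(\psi_{n,2})-o_\varepsilon(1).$$
With $\lambda_i=\lim Q(\psi_{n,i})$ and $\lambda_1+\lambda_2=q$, the scaling law $I_\lambda=\lambda^2 I_1/\!\ldots$ (that is, $I_\lambda=(\lambda/q)^2I_q$) and strict subadditivity \eqref{H1L4 subadditivity} rule out $\lambda_1\in(0,q)$. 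If $\lambda_1=0$, then $\psi_{n,1}$ has $L^4$ mass tending to $0$ but carries $\rho$-mass $\ell$, so it carries gradient mass about $\ell$. By Lemma \ref{lem: massfree} its sextic part is negligible, so $E(\psi_{n,1})\gtrsim\ell$, and we get $I_q\ge c\ell+I_q$, a contradiction. The case $\lambda_1=q$ is symmetric.

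\textbf{Compactness and the multiplier.} Translate by $y_n$. We get weak convergence in $\dot H^1$ and $L^4$, strong convergence in $L^4_{\rm loc}$ from local $H^1$ compactness, and uniformly small tails. Hence $\psi_n(\cdot+y_n)\to\psi$ in $L^4$, so $Q(\psi)=q$. Interpolation with the $L^\infty$ bound gives convergence in $L^6$. Weak lower semicontinuity of the gradient and nonlocal parts yields $E(\psi)\le I_q$, so $\psi$ is a minimizer. Equality then forces $|\psi_n|_{\dot H^1}\to|\psi|_{\dot H^1}$, which gives strong convergence in $\dot H^1$. The Lagrange multiplier rule $E'(\psi)=\mu Q'(\psi)$ gives the stated equation with $A=-\mu$.

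\textbf{Main obstacle.} I expect the dichotomy step to be hardest, in particular handling the cutoff errors without an $L^2$ bound. I would first try the annulus $L^4$ control together with a gradual (logarithmic) cutoff to make these errors $o(1)$.
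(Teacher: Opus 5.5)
Your proposal is correct, but it excludes vanishing and dichotomy by a different route than the paper.

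\textbf{How the paper does it.} It does not exclude these cases through the trichotomy for $\rho_n$. Instead it tracks only the $L^4$ concentration function $M(r)$ and its limit $\alpha$.
\begin{itemize}
\item Vanishing ($\alpha=0$) is excluded by the pigeonhole Lemma \ref{lem:conc.-H1L4}. That lemma turns $\|\psi_n\|_{L^6}\ge\delta$ into a unit interval carrying $L^6$ mass at least $\eta$. Gagliardo--Nirenberg then converts this into local $L^4$ mass (Lemma \ref{lem:no-vanishing 1}).
\item The case $0<\alpha<q$ is excluded by Lemma \ref{lem:split.}. It uses a partition $\chi_r^2+\eta_r^2\equiv1$, so the gradient cross terms cancel exactly, together with a Coifman--Meyer bound for $[|D|,\chi_r^2]$. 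This gives $I_q\ge I_\alpha+I_{q-\alpha}$.
\end{itemize}
Because the splitting is measured in $L^4$ mass only, no endpoint cases arise.

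\textbf{How your route compares.} Your local $L^\infty$ bound disposes of vanishing of $\rho_n$ in two lines, with no pigeonhole lemma. Your three-piece cutoff avoids commutator estimates altogether. The price is that $\rho_n$ also counts gradient mass, so you must treat $\lambda_1\in\{0,q\}$ separately. You do this correctly, using the gradient mass $\ell$ (resp.\ $L-\ell$) and \eqref{sextic term bound}. You also make explicit the upgrade to strong $\dot H^1$ convergence via $E(\psi)=I_q$. The statement asserts this, but the paper's proof leaves it implicit.

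\textbf{Three small precisions.}
\begin{itemize}
\item The disjoint-support cross term is negative, which is the unfavourable sign for a lower bound, so its size is what matters. Young's inequality with the kernel $|z|^{-2}\mathbf{1}_{\{|z|\ge d_n\}}$, whose $L^1$ norm is $2/d_n$, bounds it by $\lesssim d_n^{-1}\|\psi_n\|_{L^4}^4$. The $O(d_n^{-2})$ rate of Section \ref{section 5} would need $L^1$ control of $|\psi_n|^2$, which is unavailable here.
\item For the remaining nonlocal terms, write $|\psi_n|^2=|\psi_{n,1}|^2+|\psi_{n,2}|^2+g_n$ with $g_n$ supported in $R\le|x-y_n|\le R_n$. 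Then $|g_n|_{\dot H^{1/2}}^2\lesssim\|g_n\|_{L^2}\|g_n'\|_{L^2}\lesssim\varepsilon$, which is cleaner than invoking Lemma \ref{lem: massfree} on $r_n$.
\item H\"older gives $\|\psi_n\|_{L^2(\mathrm{ann})}\le|\mathrm{ann}|^{1/4}\|\psi_n\|_{L^4(\mathrm{ann})}$, so the gradient cutoff error is $O(R^{-3/4}\varepsilon^{1/4})$. No logarithmic cutoff is needed.
\end{itemize}
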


\begin{proof}
By Corollary \ref{cor: superaddit.} and Lemma \ref{lem:no-vanishing 1}, we rule out the vanishing and dichotomy cases.  Let us show that the compactness in Lemma \ref{lem:CC} leads to the existence of a minimizer for \eqref{sextic+ nonlocal}. By a change of variables,
$$
\int_{\left|x-y_n\right| \leq R(\varepsilon)} \rho_ndx \geq \int_{\mathbb{R}} \rho_ndx-\varepsilon.
$$
Therefore, 
\begin{equation*}
\int_{|x-y_n|>R(\varepsilon)} \rho_n dx\leq \varepsilon\qquad \text{for sufficiently large $n$.}
\end{equation*}
Since $\{\psi_n\}$ is bounded in $\dot{H}^1(\mathbb{R}; \mathbb{C})\cap L^4(\mathbb{R}; \mathbb{C})$ by \eqref{dot H1 bound},   there exists $\psi \in \dot{H}^1(\mathbb{R}; \mathbb{C})\cap L^4(\mathbb{R}; \mathbb{C})$ such that 
\begin{equation*}\label{weak convergence of translation 1}
\psi_n\left(\cdot+y_n\right) \rightharpoonup \psi \quad\text { in } \dot{H}^1(\mathbb{R}; \mathbb{C})\cap L^4(\mathbb{R}; \mathbb{C})
\end{equation*}
up to a subsequence.
 Moreover,  we can choose $R$ large so that 
\begin{equation}\label{psi 4 tail bound}
\int_{|x|>R}\big( | \psi^{\prime}
|^2+|\psi|^4\big)dx \leq \varepsilon.
\end{equation}
Combining 
\begin{equation*}
\psi_n\left(\cdot+y_n\right) \rightarrow \psi \quad\text { in } L_{\text {loc}}^4(\mathbb{R};\mathbb{C})
\end{equation*}
 with \eqref{psi 4 tail bound}, we have 
\begin{equation}\label{L4 strong conv}
\psi_n\left(\cdot+y_n\right) \rightarrow \psi \quad\text { in } L^4(\mathbb{R};\mathbb{C}).
\end{equation}
From  \eqref{sextic term bound} and \eqref{L4 strong conv}, we deduce
\begin{equation}\label{sextic conv.}
\psi_n\left(\cdot+y_n\right) \rightarrow \psi \quad \text { in } L^6(\mathbb{R};\mathbb{C}).
\end{equation}
Now we show
\begin{equation}\label{nonlocal conv.}
\big| |\psi_n(\cdot+y_n)|^2 - |\psi|^2 \big|_{\dot{H}^{1/2}}
 \to  0.
\end{equation}
Let $ h_n := |\psi_n(\cdot+y_n)|^2 - |\psi|^2 $.  
By the interpolation, we have
\begin{equation*}\label{eq:interpolation}
|h_n|_{\dot{H}^{1/2}}^4
 \leq C
\|h_n\|_{L^2}^2 |h_n|_{\dot{H}^1}^2.
\end{equation*}
It suffices to show that $\|h_n\|_{L^2} \to 0$ and that $|h_n|_{\dot{H}^1}$ is  bounded. Indeed,
$$
\begin{aligned}
\|h_n\|_{L^2}\leq \|\psi_n(\cdot+y_n) - \psi\|_{L^4} (\|\psi_n(\cdot+y_n)\|_{L^4} + \|\psi\|_{L^4})
\to 0
\end{aligned}
$$
by \eqref{L4 strong conv}.
Since
\begin{equation*}\label{h_n bound}
|h_n|_{\dot{H}^1}^2 
\leq 8\left(
\|\psi_n(\cdot+y_n)\|_{L^{\infty}}^2
\left|\psi_n(\cdot+y_n)\right|_{\dot{H}^1}^2
+\|\psi\|_{L^{\infty}}^2
|\psi|_{\dot{H}^1}^2
\right)
\end{equation*}
we see that $|h_n|_{\dot{H}^1}^2$ is  bounded, so we have \eqref{nonlocal conv.}.

In view of \eqref{L4 strong conv}--\eqref{nonlocal conv.}, 
we see that 
$$
E(\psi) \leq \liminf _{n \rightarrow \infty} E\left(\psi_n\right)=I_q \quad \text{and}\quad Q(\psi_n)\to Q(\psi).
$$
Therefore, $\psi$ is a minimizer for \eqref{sextic+ nonlocal}. Then the Lagrange multiplier theorem asserts that  there exists a real number $ \mu \in \mathbb{R} $ such that
$$
\left\langle E'(\psi), \nu \right\rangle = \mu \left\langle Q'(\psi), \nu \right\rangle \quad \text{for all } \nu \in \dot{H}^1(\mathbb{R}; \mathbb{C})\cap L^4(\mathbb{R}; \mathbb{C}).
$$
\end{proof}

The remainder of this section is dedicated to the auxiliary lemmas used in the proof. The next step is to rule out the dichotomy alternative.

\begin{lemma}\label{lem:conc.-H1L4}
Let $\Lambda_1:=\max\{\Lambda,(4q)^{1/4}\}$ and $\delta>0$. Then there exists a constant 
$\eta=\eta(\Lambda_1,\delta)>0$ such that if 
$f \in \dot{H}^1(\mathbb{R}; \mathbb{C})\cap L^4(\mathbb{R}; \mathbb{C})$ with
$$
|f|_{\dot{H}^1} + \|f\|_{L^4} \leq \Lambda_1
\quad  \text{and} \quad
\|f\|_{L^6} \geq \delta,
$$
then
$$
\sup_{y\in\mathbb{R}}
\int_{y-1/2}^{ y+1/2} |f|^6dx
\;\ge\; \eta.
$$
\end{lemma}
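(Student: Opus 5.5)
The plan is a standard localization argument that trades the global $L^6$ lower bound for a local one by means of a uniform local Gagliardo--Nirenberg estimate on unit intervals.

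Partition $\mathbb{R}$ into the unit intervals $Q_k:=[k-1/2,k+1/2)$, $k\in\mathbb{Z}$. On each $Q_k$ we use a local $L^\infty$ bound. For $x,z\in Q_k$ we have
$$|f(x)|^2\le |f(z)|^2+2\int_{Q_k}|f||f'|\,dy .$$
Averaging in $z$ over $Q_k$ and applying H\"older's inequality gives
$$\|f\|_{L^\infty(Q_k)}^2\lesssim \|f\|_{L^4(Q_k)}^2+\|f\|_{L^2(Q_k)}\|f'\|_{L^2(Q_k)} .$$
Since $|Q_k|=1$, $\|f\|_{L^2(Q_k)}\le\|f\|_{L^4(Q_k)}$, and the right-hand side is bounded by $C(\Lambda_1)\,\|f\|_{L^4(Q_k)}$ whenever $\|f\|_{L^4(Q_k)}\le\Lambda_1$. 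Only a fixed positive power of the local $L^4$ or $L^6$ quantity is needed here, with constants depending on $\Lambda_1$; this uniform local estimate is the one point requiring care.

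Next we write $m:=\sup_k\int_{Q_k}|f|^6dx$. The preferred route interpolates only $L^4$ and $L^\infty$. For each $k$,
$$\int_{Q_k}|f|^6\le \|f\|_{L^\infty(Q_k)}^2\int_{Q_k}|f|^4 .$$
Summing over $k$ gives
$$\delta^6\le\|f\|_{L^6}^6\le \sup_k\|f\|_{L^\infty(Q_k)}^2\,\|f\|_{L^4}^4\le \Lambda_1^4\sup_k\|f\|^2_{L^\infty(Q_k)} ,$$
so some $k$ satisfies $\|f\|_{L^\infty(Q_k)}^2\ge \delta^6/(2\Lambda_1^4)$. We must now convert this large sup on $Q_k$ into a large integral of $|f|^6$ over a unit interval. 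Choose $x_0\in Q_k$ with $|f(x_0)|\ge a:=\delta^3/(2\Lambda_1^2)$. The global $\dot H^1$ bound gives $|f(x)-f(x_0)|\le\Lambda_1|x-x_0|^{1/2}$, so $|f|\ge a/2$ on the interval $|x-x_0|\le r:=\min\{1/2,\,a^2/(4\Lambda_1^2)\}$.

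Taking $y=x_0$, we conclude
$$\int_{y-1/2}^{y+1/2}|f|^6\,dx\ge 2r\,(a/2)^6=:\eta(\Lambda_1,\delta)>0 .$$
This argument bypasses the local interpolation estimate entirely; it needs only the H\"older-$1/2$ continuity furnished by $|f|_{\dot H^1}\le\Lambda_1$, together with the pointwise bound $\|f\|_{L^\infty}<\infty$, which follows from Lemma \ref{lem: massfree}.

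The step I expect to be the main obstacle is locating the point $x_0$, that is, justifying that the supremum of $|f|$ is nearly attained and that $f$ is continuous. Both follow because $f\in\dot H^1(\mathbb{R})$ has an absolutely continuous representative, and because $f\in L^4$ forces $|f|\to0$ at infinity, by uniform continuity combined with integrability. The rest of the argument consists of explicit constant bookkeeping.
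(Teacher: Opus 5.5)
Your argument is correct, and it takes a different route from the paper's.

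\textbf{The paper's route.} The paper also partitions $\mathbb{R}$ into unit intervals $J_k$, but then uses a pigeonhole argument. Since $\sum_k\int_{J_k}(|f'|^2+|f|^4)\,dx\le\Lambda_1^2+\Lambda_1^4=R_f\|f\|_{L^6}^6$, some $J_{k_0}$ has local energy at most $R_f\int_{J_{k_0}}|f|^6\,dx$. The paper inserts this into the local Gagliardo--Nirenberg inequality $\|f\|_{L^6(J)}\lesssim\|f'\|_{L^2(J)}^{1/9}\|f\|_{L^4(J)}^{8/9}+\|f\|_{L^4(J)}$ and divides by $\|f\|_{L^6(J_{k_0})}$. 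This gives a lower bound on the local $L^6$ mass in terms of $R_f\le(\Lambda_1^2+\Lambda_1^4)/\delta^6$.

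\textbf{Your route.} You get a lower bound on $\|f\|_{L^\infty}$ from $\|f\|_{L^6}^6\le\|f\|_{L^\infty}^2\|f\|_{L^4}^4$. You then spread that peak over an interval of length comparable to $a^2/\Lambda_1^2$, using $|f(x)-f(x_0)|\le|f|_{\dot H^1}|x-x_0|^{1/2}$.

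\textbf{What your route buys.} It is shorter and gives a fully explicit $\eta$. It needs no local interpolation inequality, so you can drop your opening paragraph on the local $L^\infty$ bound. You can also drop the remark about decay at infinity: continuity of $f$ and the fact that the essential supremum of $|f|$ exceeds $a$ already give the point $x_0$. Your route also never divides by a local norm. The paper's pigeonhole, as written, could select an interval on which $f\equiv0$. It has to be restricted to intervals with $\int_{J_k}|f|^6\,dx>0$ before dividing, a point your argument never meets.

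\textbf{What the paper's route buys.} Your argument is intrinsically one-dimensional, because it relies on pointwise H\"older control of $\dot H^1$ functions. The paper's pigeonhole-plus-local-Gagliardo--Nirenberg scheme is the standard Lions-type localization. It carries over to higher dimensions and to settings without pointwise bounds.
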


\begin{proof}
Let
$J_k:=(k-1/2,k+1/2)$, $k\in\mathbb{Z}$. Summing over $k$, we have
\begin{equation*}
\sum_{k\in\mathbb{Z}} \int_{J_k} |f|^6dx
= \|f\|_{L^6}^6 \geq \delta^6,
\end{equation*}
while
\begin{equation}\label{eq:H1L4-decomp}
\sum_{k\in\mathbb{Z}} \int_{J_k} (|f'|^2 + |f|^4)dx
= |f|_{\dot{H}^1}^2 + \|f\|_{L^4}^4
\leq \Lambda_1^2 + \Lambda_1^4.
\end{equation}
Also, let 
$$R_f:=\frac{\Lambda_1^2+\Lambda_1^4}{\|f\|_{L^6}^6}.$$ 
If each interval $J_k$ satisfied
$$
\int_{J_k} (|f'|^2+|f|^4)dx
> 
R_f
\int_{J_k} |f|^6dx,
$$
then summing over $k$ would contradict \eqref{eq:H1L4-decomp}.  
Thus, there exists $k_0\in\mathbb{Z}$ such that
\begin{equation}\label{eq: pigeonhole}
\int_{J_{k_0}} (|f'|^2+|f|^4)dx
\leq 
R_f \int_{J_{k_0}} |f|^6dx.
\end{equation}
From \eqref{eq: pigeonhole}, we have
\begin{equation}\label{eq:dotH1-L6 and L4-L6}
\|f\|_{\dot{H}^1(J_{k_0})} \leq R_f^{1/2}\|f\|_{L^6(J_{k_0})}^3\quad \text{and} 
\quad \|f\|_{L^4(J_{k_0})} \leq R_f^{1/4}\|f\|_{L^6(J_{k_0})}^{3/2}.
\end{equation}
Moreover, by the Gagliardo--Nirenberg inequality, 
\begin{equation}\label{eq:GN-local}
\|f\|_{L^6(J_{k_0})}
\leq C \left( \|f\|_{\dot{H}^1(J_{k_0})}^{1/9} \|f\|_{L^4(J_{k_0})}^{8/9} +  \|f\|_{L^4(J_{k_0})}\right).
\end{equation}
Combining \eqref{eq:dotH1-L6 and L4-L6} with \eqref{eq:GN-local} and dividing by $\|f\|_{L^6(J_{k_0})}$ yields
\begin{equation}\label{eq:key}
1
\leq C R_f^{5/18} \|f\|_{L^6(J_{k_0})}^{2/3} \;+\; C R_f^{1/4} \|f\|_{L^6(J_{k_0})}^{1/2}.
\end{equation}
At least one term on the right-hand side of \eqref{eq:key} must be $\geq 1/2$.  
This leads to two possibilities. If $C R_f^{1/4} \|f\|_{L^6(J_{k_0})}^{1/2}\geq 1/2$, then  
$$
\|f\|_{L^6(J_{k_0})}^6 \geq\left(\frac{1}{2 C}\right)^{12} R_f^{-3}.
$$
If $C R_f^{5/18} \|f\|_{L^6(J_{k_0})}^{2/3}\geq 1/2$, then  
$$
\|f\|_{L^6(J_{k_0})}^6 \geq\left(\frac{1}{2 C}\right)^9 R_f^{-5 / 2}.
$$
Since $\|f\|_{L^6}\geq \delta$,  it follows that
$$
R_f
\leq \frac{\Lambda_1^2+\Lambda_1^4}{\delta^6}.
$$
Substituting this bound into the two inequalities above gives
$$
\begin{aligned}
\|f\|_{L^6(J_{k_0})}^6
\;&\ge\;
\min\left\{
\left(\frac{1}{2 C}\right)^{12} \frac{\delta^{18}}{(\Lambda_1^2+\Lambda_1^4)^3},
\;
\left(\frac{1}{2 C}\right)^9 \frac{\delta^{15}}{(\Lambda_1^2+\Lambda_1^4)^{5/2}}
\right\}
\\
&=: \eta(\Lambda_1,\delta)>0.
\end{aligned}
$$
Finally, the interval $J_{k_0}$ is of the form 
$(y-1/2,y+1/2)$ for some $y\in\mathbb{R}$, which completes the proof.
\end{proof}

Given $r>0$, we introduce
$$
M_n(r) := \sup_{y\in\mathbb{R}} \int_{y-r}^{y+r} |\psi_n|^4dx\quad \text{and}\quad M(r) := \lim_{n\to\infty} M_n(r).
$$
Then $M(r)$ is nondecreasing in $r$ and satisfies $M(r)\leq q$. Also, let
$$
\alpha:=\lim _{r \rightarrow \infty} M(r).
$$
It is clear that $0\leq \alpha \leq q$.

The following lemma is used to analyze the behavior of minimizing sequences in the case $0< \alpha<q$.

\begin{lemma}\label{lem:split.}
For every $\varepsilon>0$, there exist  $n(\varepsilon)\in\mathbb{N}$ and 
sequences $\{g_n,g_{n+1},\dots\}$ and $\{h_n,h_{n+1},\dots\}$ in  $\dot{H}^1(\mathbb{R}; \mathbb{C})\cap L^4(\mathbb{R}; \mathbb{C})$ 
 such that for all $n\geq n(\varepsilon)$,
\begin{equation*}\label{stat.1}
  |Q(g_n)-\alpha|<\varepsilon,  
\end{equation*}
\begin{equation*}\label{stat.2}
 |Q(h_n)-(q-\alpha)|<\varepsilon,   
\end{equation*}
and
\begin{equation}\label{stat.3}
E(\psi_n)\geq E(g_n)+E(h_n)-\varepsilon.
\end{equation}
\end{lemma}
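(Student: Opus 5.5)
Let me sketch the standard Lions-type splitting adapted to the $\dot H^1\cap L^4$ setting. One thing to fix first: the constraint is $Q=\frac14\|\cdot\|_{L^4}^4$, while $M_n(r)$ measures $\int|\psi_n|^4$ itself, so the factor $1/4$ must be matched. I will therefore read $\alpha$ as the limiting concentration of $Q$, i.e. of $\frac14\int|\psi_n|^4$, which is how the statement compares $Q(g_n)$ with $\alpha$ and $Q(h_n)$ with $q-\alpha$.

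First, given $\varepsilon>0$, I choose $r_0$ with $M(r_0)>\alpha-\varepsilon$. Since $M$ is nondecreasing with limit $\alpha$, I can then choose $r_n\to\infty$ (after a subsequence) such that $M_n(2r_n)<\alpha+\varepsilon$ and $M_n(r_0)>\alpha-\varepsilon$. By the definition of the supremum there are points $y_n$ with
\[
\int_{y_n-r_0}^{y_n+r_0}|\psi_n|^4dx>\alpha-\varepsilon .
\]
Consequently the annulus $r_0\le |x-y_n|\le 2r_n$ carries $L^4$-mass at most $2\varepsilon$. To make the gradient small there too, I use a pigeonhole argument over dyadic or consecutive shells. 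The $\dot H^1$ norm is bounded by $\Lambda$ from Lemma \ref{lem:bound-nonvanish.}, and the interval $[r_0,r_n]$ can be split into $N\sim\varepsilon^{-1}$ shells of length at least $r_0$. One of these shells, say $\{R_1\le|x-y_n|\le R_2\}$ with $R_2-R_1\ge r_0$, then carries $\dot H^1$-energy at most $\Lambda^2/N\lesssim\varepsilon$, and its $L^4$-mass is still at most $2\varepsilon$. I take $r_0$ large, depending on $\varepsilon$.

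Second, I take cutoffs $\xi,\zeta$ with $\xi=1$ on $|x-y_n|\le R_1$, $\xi$ supported in $|x-y_n|\le (R_1+R_2)/2$, $\zeta=1$ for $|x-y_n|\ge R_2$, and disjoint supports. Their gradients are $O(1/r_0)$. I then set $g_n=\xi\psi_n$ and $h_n=\zeta\psi_n$. The key point is that the cutoff derivative terms are controlled without any $L^2$ norm: on the shell, by Hölder,
\[
\|\xi'\psi_n\|_{L^2}^2\le \|\xi'\|_{L^4}^2\|\psi_n\|_{L^4(\text{shell})}^2\lesssim r_0^{-3/2}(R_2-R_1)^{1/2}\varepsilon^{1/2}.
\]
Because the shell width can be as large as $r_n$, this bound is not automatically small. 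I fix this by choosing the shell width comparable to $r_0$ (fixed shells), or by using logarithmic cutoffs. This is the main obstacle, since $L^2$ control is not available. My first attempt will be to pick a shell of fixed width $r_0$ inside the annulus. Its $L^4$ mass is $\le2\varepsilon$, and by pigeonhole its $\dot H^1$ mass is small, so $\|\xi'\psi_n\|_{L^2}^2\lesssim r_0^{-1}\varepsilon^{1/2}$.

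Third, I verify the three estimates.
\begin{itemize}
\item $Q(g_n)$ differs from the inner-ball mass by at most the shell mass. Hence $|Q(g_n)-\alpha|\lesssim\varepsilon$, since the mass inside radius $2r_n$ is $<\alpha+\varepsilon$.
\item $Q(h_n)=Q(\psi_n)-Q(g_n)+O(\varepsilon)=q-\alpha+O(\varepsilon)$.
\item For the energy, the gradient and sextic terms split up to shell contributions. Using $|\xi|^2+|\zeta|^2\le1$ and disjoint supports, $\int|\psi_n'|^2\ge\int|g_n'|^2+\int|h_n'|^2-O(\varepsilon)$, because the cross terms involve $\xi'\psi_n$. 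The sextic term is local, $B<0$, and its shell contribution is bounded by $\|\psi_n\|_{L^\infty}^2$ times the shell $L^4$ mass, which is $O(\varepsilon)$. For the nonlocal term with $\gamma\ge0$, I expand $|D|^{1/2}|\psi_n|^2$ into pieces $\xi^2|\psi_n|^2$, $\zeta^2|\psi_n|^2$, and a shell remainder. The cross term $\langle |D|^{1/2}(\xi^2 f),|D|^{1/2}(\zeta^2f)\rangle=-c\iint \frac{\xi^2f(x)\zeta^2f(y)}{|x-y|^2}$ is \emph{negative} for disjoint supports. This has the wrong sign, so I bound it by $(\text{separation})^{-2}\|f\|_{L^1}^2$-type estimates. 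Since $f=|\psi_n|^2\in L^2$ only, I instead use the separation and Cauchy–Schwarz with kernel $|x-y|^{-2}$ on separated sets, giving $O(r_0^{-1}\|f\|_{L^2}^2)=O(r_0^{-1})$. The shell remainder is handled by Lemma \ref{lem: massfree}-type bounds restricted to the shell.
\end{itemize}

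Taking $r_0$ large and renaming $\varepsilon$ gives \eqref{stat.3}.
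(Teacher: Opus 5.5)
Your plan is correct. Reading $\alpha$ as the limiting concentration of $Q=\frac14\int|\psi_n|^4$ is the right normalization; the paper's proof tacitly needs it too, since it asserts $M(r)\le q$ while $\int|\psi_n|^4=4q$.

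The Lions framework is the same as the paper's, but you handle the cutoffs and the nonlocal term differently.

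\textbf{The paper's route.} It works at one fixed scale $r$ with a quadratic partition of unity $\chi_r^2+\eta_r^2\equiv1$.
\begin{itemize}
\item The gradient cross terms cancel exactly, because $(\chi_r^2+\eta_r^2)'=0$.
\item The terms $((\chi_r')^2+(\eta_r')^2)|\psi_n|^2$ are $O(r^{-3/2})$ by H\"older on $\Omega_r$.
\item The nonlocal term is handled by the Calder\'on--Coifman--Meyer commutator bound $\|[|D|,\chi_r^2]f\|_{L^2}\lesssim r^{-1}\|f\|_{L^2}$. What remains is $2\int\chi_r^2\eta_r^2|\psi_n|^2|D|(|\psi_n|^2)\,dx$, which is small by the annulus $L^4$-smallness and the bound on $\|(|\psi_n|^2)'\|_{L^2}$.
\end{itemize}

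\textbf{Your route.} You use cutoffs whose supports are separated by a gap $d\gtrsim r_0$ and write $f=\xi^2f+\zeta^2f+\rho f$.
\begin{itemize}
\item The term $-\||D|^{1/2}(\rho f)\|_{L^2}^2$ enters with a favorable sign.
\item The $\xi$--$\zeta$ interaction is bounded through the off-diagonal kernel of $|D|$ and a Schur/Young estimate with $\int_{|z|\ge d}|z|^{-2}\,dz=2/d$.
\item The $\rho$-interactions are controlled by $\||D|^{1/2}(\rho f)\|_{L^2}^2\le\|\rho f\|_{L^2}\|(\rho f)'\|_{L^2}\lesssim\varepsilon^{1/2}$.
\end{itemize}
This avoids the commutator estimate and is more elementary. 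The price is twofold. The gradient cross terms must be estimated rather than cancelled. And the gap between the supports must be imposed explicitly: mere disjointness, as currently written, does not give the $O(r_0^{-1})$ bound.

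\textbf{Superfluous steps.} The pigeonhole on the $\dot H^1$-energy and the growing radii $r_n$ are not needed. If the cutoff slope is the inverse of the transition width $w$, then $\|\xi'\psi_n\|_{L^2}^2\lesssim w^{-2}\cdot w^{1/2}\|\psi_n\|_{L^4}^2$. This is small for large $w$ without any local smallness. So the ``main obstacle'' you identify does not arise, and a single fixed annulus, as in the paper, suffices.
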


\begin{proof}
We choose $\chi \in C_0^\infty(-2,2)$ such that $\chi \equiv 1$ on $[-1,1]$, 
and let $\eta \in C^\infty(\mathbb{R})$ satisfy
$\chi^2 + \eta^2 \equiv 1$ on $\mathbb{R}$. 
Given $r>0$, we introduce the rescaled cutoff functions
$$
\chi_r(x) := \chi \left(x/r\right) \quad\text{and}\quad \eta_r(x) := \eta\left(x/r\right).
$$
Note that
$$
\alpha - \varepsilon < M(r) \leq M(2r) \leq \alpha
$$
for all sufficiently large  $r$. Suppose that a suitable value of $r$ has been selected.
Then there exists $N \in \mathbb{N}$ sufficiently large such that
$$
\alpha-\varepsilon<M_n(r) \leq M_n(2 r)<\alpha+\varepsilon
$$
for all $n \geq N$. Hence, for each $ n \geq N $, there exists $ y_n \in \mathbb{R} $ such that 
\begin{equation}\label{eq:mass-left}
\int_{y_n - r}^{y_n + r} |\psi_n|^4dx > \alpha - \varepsilon,
\end{equation}
and
\begin{equation}\label{eq:mass-right}
\int_{y_n - 2r}^{y_n + 2r} |\psi_n|^4dx < \alpha + \varepsilon.
\end{equation}
Let us also define
$$
g_n := \chi_r(\cdot - y_n)\psi_n
\quad \text{and}\quad
h_n := \eta_r(\cdot - y_n)\psi_n.
$$
Then $g_n$ and $h_n$ satisfy 
$$|Q(g_n)-\alpha|<\varepsilon \quad \text{ and } \quad|Q(h_n)-(q-\alpha)|<\varepsilon.$$
It remains to prove \eqref{stat.3}.  For simplicity, we  denote by $\chi_r$ and $\eta_r$ the translated cutoff functions 
$\chi_r(x - y_n)$ and $\eta_r(x - y_n)$.  Note that,
\begin{equation}\label{split identity}
\begin{aligned}
E(g_n)+E(h_n)
&= \frac{1}{2}\int_{\mathbb{R}}(|g_n'|^2+|h_n'|^2)dx
  + \frac{B}{6}\int_{\mathbb{R}}(|g_n|^6+|h_n|^6)dx
 \\
 &+ \frac{\gamma}{4}\int_{\mathbb{R}}\Big(|g_n|^2
      |D|\left(|g_n|^2\right)
     +|h_n|^2|D|\left(|h_n|^2\right)
    \Big)dx \\
&= \frac{1}{2}\left(
   \int_{\mathbb{R}}\chi_r^2|\psi_n'|^2dx
   +2\int_{\mathbb{R}}\chi_r\chi_r'\psi_n\psi_n'dx
   +\int_{\mathbb{R}}(\chi_r')^2\psi_n^2dx \right.\\
& \left.
   +\int_{\mathbb{R}}\eta_r^2|\psi_n'|^2dx
+2\int_{\mathbb{R}}\eta_r\eta_r'\psi_n\psi_n'dx
   +\int_{\mathbb{R}}(\eta_r')^2\psi_n^2dx
   \right)\\
& +\frac{B}{6}\left(
   \int_{\mathbb{R}}\chi_r^2|\psi_n|^6dx
   +\int_{\mathbb{R}}\eta_r^2|\psi_n|^6dx
   \right)\\
& -\frac{B}{6}\int_{\mathbb{R}}
   \big[(\chi_r^2-\chi_r^6)+(\eta_r^2-\eta_r^6)\big]|\psi_n|^6dx\\
&+\frac{\gamma}{4}\left(
   \int_{\mathbb{R}}\chi_r^2|\psi_n|^2|D|(\chi_r^2|\psi_n|^2)dx
   +\int_{\mathbb{R}}\eta_r^2|\psi_n|^2|D|(\eta_r^2|\psi_n|^2)dx
   \right).
\end{aligned}
\end{equation}
Since  $(\chi_r^2+\eta_r^2)' = 0$,
it follows that
\begin{equation}\label{cross terms vanish}
2\int_{\mathbb{R}}\chi_r\chi_r'\psi_n\psi_n'dx
+ 2\int_{\mathbb{R}}\eta_r\eta_r'\psi_n\psi_n'dx = 0.
\end{equation}
Using 
$$
\left|\chi_r^{\prime}\right|+\left|\eta_r^{\prime}\right| \lesssim r^{-1}
$$
and the fact that both derivatives are supported 
in the annulus $\Omega_r:=\{x\in\mathbb{R}:r<|x-y_n|<2r\}$, we obtain
$$
\int_{\mathbb{R}}\big((\chi_r')^2+(\eta_r')^2\big)\psi_n^2dx
\lesssim r^{-2}\int_{\Omega_r}|\psi_n|^2dx \lesssim  r^{-3/2}\|\psi_n\|_{L^4}^2=O(r^{-3/2}).
$$
Since  
$\chi_r^2-\chi_r^6$ and $\eta_r^2-\eta_r^6$ are 
supported  in $\Omega_r$,  it follows that
\begin{equation}\label{2 6 chi eta}
\left|
\int_{\mathbb{R}}
   \Big((\chi_r^2-\chi_r^6)+(\eta_r^2-\eta_r^6)\Big)
   |\psi_n|^6dx
\right|
\leq \|\psi_n\|_{L^{\infty}}^2 
\int_{\Omega_r} |\psi_n(x)|^4dx
\lesssim \varepsilon
\end{equation}
by \eqref{eq:mass-left} and \eqref{eq:mass-right}.

If we show that $$\left|\int_{\mathbb{R}}\chi_r^2|\psi|^2|D|\left(\chi_r^2|\psi|^2\right)dx+\int_{\mathbb{R}}\eta_r^2|\psi|^2|D|\left(\eta_r^2|\psi|^2\right)dx-\int_{\mathbb{R}}|\psi|^2|D|\left(|\psi|^2\right)dx\right|<\varepsilon,$$
then combining \eqref{split identity}--\eqref{2 6 chi eta}, we derive \eqref{stat.3}. By using the commutator identity,
\begin{equation*}
\begin{aligned}
\int_{\mathbb{R}}\chi_r^2|\psi_n|^2|D|(\chi_r^2|\psi_n|^2)dx =\int_{\mathbb{R}}\chi_r^2|\psi_n|^2\left[|D|,\chi_r^2\right]|\psi_n|^2dx+\int_{\mathbb{R}}\chi_r^4|\psi_n|^2|D||\psi_n|^2dx.
\end{aligned}
\end{equation*}
Then we find an upper bound for the first term
\begin{equation*}
\begin{aligned}
\int_{\mathbb{R}}\chi_r^2|\psi_n|^2\left[|D|,\chi_r^2\right]|\psi_n|^2dx&\leq \left\|\chi_r^2|\psi_n|^2\right\|_{L^2}\left\|\left[|D|,\chi_r^2\right]|\psi_n|^2\right\|_{L^2}\\
&\leq C\|\psi_n\|_{L^4}^2\left\|(\chi_r^2)^{\prime}\right\|_{L^{\infty}}\|\psi_n\|_{L^4}^2\\
&\leq C r^{-1}\left\|\psi_n\right\|_{L^4}^4
\end{aligned}
\end{equation*}
by using H\"older's inequality and Coifman-Meyer estimate \cite[Lemma 6.7]{Albert1999}. One can show that
\begin{equation*}
\int_{\mathbb{R}}\eta_r^2|\psi_n|^2\left[|D|,\eta_r^2\right]|\psi_n|^2dx=O(r^{-1}).
\end{equation*}
Thus,
$$
\begin{aligned}
&\left|\int_{\mathbb{R}} \Big(\chi_r^2|\psi_n|^2|D|\left(\chi_r^2|\psi_n|^2\right)
+\eta_r^2|\psi_n|^2|D|\left(\eta_r^2|\psi_n|^2\right)
-|\psi_n|^2|D|\left(|\psi_n|^2\right)\Big)dx\right|\\
&=\left|\int_{\mathbb{R}} \left(\chi_r^4+\eta_r^4-\chi_r^2-\eta_r^2\right)|\psi_n|^2 |D|\left(|\psi_n|^2\right)dx\right|
+O(r^{-1})
\\
&\leq 2\int_{\mathbb{R}} \chi_r^2\eta_r^2|\psi_n|^2|D|\left(|\psi_n|^2\right)dx+O(r^{-1})
\\
&\leq 2\int_{\mathbb{R}} \mathds{1}_{\Omega_r}|\psi_n|^2|D|\left(|\psi_n|^2\right)dx+O(r^{-1})<\varepsilon,
\end{aligned}
$$
where we have chosen $r$ sufficiently large so that the term $O(r^{-1})$ 
is smaller than $\varepsilon$ in absolute value.

\end{proof}

\begin{corollary}\label{cor: superaddit.}
If $0<\alpha<q$, then
$$
I_q  \geq  I_\alpha + I_{q-\alpha}.
$$
\end{corollary}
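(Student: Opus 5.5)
The plan is to combine the splitting of Lemma \ref{lem:split.} with a rescaling of the two pieces so that they satisfy the constraints exactly. Then we pass to the limit, first in $n$ and afterwards in $\varepsilon$.

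Fix $\varepsilon>0$ small, with $\varepsilon<\min\{\alpha,q-\alpha\}/2$. Lemma \ref{lem:split.} gives $g_n,h_n$ for $n\ge n(\varepsilon)$ such that
\[
|Q(g_n)-\alpha|<\varepsilon,\qquad |Q(h_n)-(q-\alpha)|<\varepsilon,\qquad E(\psi_n)\ge E(g_n)+E(h_n)-\varepsilon .
\]
Set $a_n:=Q(g_n)>0$ and $b_n:=Q(h_n)>0$. By definition of the infimum and the scaling law $I_s=s^2I_1$, we have $E(g_n)\ge I_{a_n}=a_n^2I_1$ and $E(h_n)\ge I_{b_n}=b_n^2 I_1$. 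This step needs only that $g_n$ and $h_n$ are admissible, i.e.\ that they lie in $\dot H^1\cap L^4$ with positive $Q$, which holds. Hence
\[
E(\psi_n)\ge (a_n^2+b_n^2)I_1-\varepsilon .
\]
Since $I_1<0$ and $a_n\in(\alpha-\varepsilon,\alpha+\varepsilon)$, $b_n\in(q-\alpha-\varepsilon,q-\alpha+\varepsilon)$, we get
\[
(a_n^2+b_n^2)I_1\ge \bigl((\alpha+\varepsilon)^2+(q-\alpha+\varepsilon)^2\bigr)I_1 .
\]
Letting $n\to\infty$ and using $E(\psi_n)\to I_q$ gives
\[
I_q\ge \bigl((\alpha+\varepsilon)^2+(q-\alpha+\varepsilon)^2\bigr)I_1-\varepsilon .
\]
Letting $\varepsilon\to0$ yields $I_q\ge(\alpha^2+(q-\alpha)^2)I_1=I_\alpha+I_{q-\alpha}$, again by the scaling lemma.

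The main point to check is the use of the scaling law at non-exact levels $a_n,b_n$. The bound $E(g_n)\ge I_{Q(g_n)}$ is immediate from the definition of $I_s$ at $s=Q(g_n)$, and the scaling lemma holds for every positive level, so no further rescaling of $g_n$ is needed. Alternatively, one can avoid this by using the continuity of $s\mapsto s^2I_1$. The only other subtlety is that $Q(g_n),Q(h_n)$ must be bounded away from zero, which is guaranteed by $0<\alpha<q$ and the choice of $\varepsilon$. Finally, the error term $-\varepsilon$ from \eqref{stat.3} is uniform in $n$, so the order of limits ($n\to\infty$ first, then $\varepsilon\to0$) causes no difficulty.
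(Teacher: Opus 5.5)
Your argument is correct. It shares the paper's skeleton (Lemma~\ref{lem:split.}, then the scaling law, then $n\to\infty$ followed by $\varepsilon\to0$), but it handles the mismatch between $Q(g_n),Q(h_n)$ and the exact levels $\alpha,\,q-\alpha$ differently.

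The paper renormalizes the pieces by the amplitude factors $\beta_g=(\alpha/Q(g_n))^{1/4}$ and $\beta_h=((q-\alpha)/Q(h_n))^{1/4}$. It compares only with the infima at the exact levels, via $I_\alpha\le E(\beta_g g_n)\le E(g_n)+C_1\varepsilon$, and then extracts a diagonal subsequence. That step implicitly needs two things: uniform bounds on $g_n,h_n$ in $\dot H^1\cap L^4$, to control how each homogeneous term of $E$ changes under $g\mapsto\beta g$; and $Q(g_n),Q(h_n)$ bounded away from $0$, so that $|\beta-1|\lesssim\varepsilon$.

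You instead use $E(g_n)\ge I_{Q(g_n)}=Q(g_n)^2I_1$ directly. This is valid because the scaling lemma gives $I_s=s^2I_1$ at every level $s>0$, not just the target ones. You then absorb the $O(\varepsilon)$ error in the levels through $I_1<0$, or simply through continuity of $s\mapsto s^2I_1$.

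Your version is shorter and needs no bounds on the pieces. In fact only the upper bounds $Q(g_n)<\alpha+\varepsilon$ and $Q(h_n)<q-\alpha+\varepsilon$ enter your final inequality, so the lower bound you flag as a subtlety plays no real role. The paper's version, by contrast, uses only the definitions of $I_\alpha$ and $I_{q-\alpha}$ at the exact levels. It would therefore survive in settings where no exact scaling identity for $I_s$ is available.
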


\begin{proof}
Let $\{\psi_n\}$ be a minimizing sequence for $I_q$,
and let $\{g_n\}$ and $\{h_n\}$ be the sequences defined in
Lemma \ref{lem:split.}.
Then for every $\varepsilon>0$ and all sufficiently large $n$,
$$
|Q(g_n)-\alpha| < \varepsilon, 
$$
$$|Q(h_n)-(q-\alpha)| < \varepsilon, $$
and
$$E(\psi_n) \geq E(g_n)+E(h_n)-\varepsilon.$$
For such $n$, we define
$$
\beta_g := \Bigl(\frac{\alpha}{Q(g_n)}\Bigr)^{1/4}\quad \text{and}\quad \beta_h := \Bigl(\frac{q-\alpha}{Q(h_n)}\Bigr)^{1/4}.
$$
Thus, $Q(\beta_g g_n)=\alpha$ and $Q(\beta_h h_n)=q-\alpha$.
Since $$|Q(g_n)-\alpha|<\varepsilon\quad \text{and}\quad |Q(h_n)-(q-\alpha)|<\varepsilon,$$ 
we have $$|\beta_g-1|\leq C\varepsilon\quad \text{and}\quad |\beta_h-1|\leq C\varepsilon,$$ 
where $C>0$ is independent of $n$. Then there exists a constant $C_1>0$, independent of $n$, such that
$$
I_\alpha 
 \leq E(\beta_g g_n) 
 \leq E(g_n)+C_1\varepsilon,
$$
and
$$I_{q-\alpha}
 \leq E(\beta_h h_n)
 \leq E(h_n)+C_1\varepsilon.$$
From these observations and Lemma \ref{lem:split.},
it follows that there exists a subsequence
$\{\psi_{n_k}\}$  and corresponding functions
$g_{n_k}$ and $h_{n_k}$ such that 
$$
E(g_{n_k}) \geq I_\alpha - \frac{1}{k}\quad \text{and}\quad E(h_{n_k}) \geq I_{q-\alpha} - \frac{1}{k},
$$
and 
$$E(\psi_{n_k})
  \geq E(g_{n_k}) + E(h_{n_k}) - \frac{1}{k}.$$
for all $k$. Hence,
$$
E(\psi_{n_k})
 \geq I_\alpha + I_{q-\alpha} - \frac{3}{k}.
$$
Taking the limit as $k\to\infty$ yields the desired inequality.
\end{proof}
Corollary \ref{cor: superaddit.} contradicts \eqref{H1L4 subadditivity}, and thus the case
$$
0<\alpha<q
$$
is impossible. Hence, the dichotomy alternative is ruled out. We finally exclude vanishing in the next result.

\begin{lemma}\label{lem:no-vanishing 1}
Let $\{\psi_n\}$ be a minimizing sequence for $I_q$. 
Then there exist a constant $\eta>0$ and a sequence $\{y_n\}$ in $\mathbb{R}$ such that
$$
\int_{y_n-1/2}^{ y_n+1/2} |\psi_n|^6dx \geq \eta
\qquad\text{for all } n.
$$
In particular, $\alpha>0$.
\end{lemma}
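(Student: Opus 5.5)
The plan is to combine Lemma \ref{lem:bound-nonvanish.} with Lemma \ref{lem:conc.-H1L4}, and then deduce $\alpha>0$ from the local $L^6$ concentration.

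First, by Lemma \ref{lem:bound-nonvanish.} there are $\Lambda=\Lambda(q)>0$ and $\delta>0$ such that $|\psi_n|_{\dot H^1}\le\Lambda$ for all $n$ and $\|\psi_n\|_{L^6}\ge\delta$ for all $n\ge n_0$. The constraint $Q(\psi_n)=q$ gives $\|\psi_n\|_{L^4}=(4q)^{1/4}$. Hence
$$|\psi_n|_{\dot H^1}+\|\psi_n\|_{L^4}\le 2\Lambda_1,$$
with $\Lambda_1$ as in Lemma \ref{lem:conc.-H1L4}. The factor $2$ is harmless, since the proof of that lemma only uses an upper bound on $|f|_{\dot H^1}^2+\|f\|_{L^4}^4$; alternatively one can replace $\Lambda_1$ by $2\Lambda_1$.

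Applying Lemma \ref{lem:conc.-H1L4} to $f=\psi_n$ for $n\ge n_0$ gives a uniform $\eta_0>0$ with
$$\sup_y\int_{y-1/2}^{y+1/2}|\psi_n|^6\,dx\ge\eta_0.$$
We then choose $y_n$ attaining at least $\eta_0/2$, and set $\eta=\eta_0/2$. For the finitely many indices $n<n_0$ we either discard them, since a minimizing sequence may be relabeled, or use that each $\psi_n\neq0$ has positive local $L^6$ mass somewhere and take the minimum over these finitely many values. This gives the first claim for all $n$.

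To show $\alpha>0$, we convert $L^6$ concentration into $L^4$ concentration on the same interval $J_n=(y_n-1/2,y_n+1/2)$. By the one-dimensional Gagliardo--Nirenberg inequality,
$$\|\psi_n\|_{L^\infty}\le C|\psi_n|_{\dot H^1}^{1/3}\|\psi_n\|_{L^4}^{2/3}\le C',$$
uniformly in $n$. Therefore
$$\eta\le\int_{J_n}|\psi_n|^6\,dx\le\|\psi_n\|_{L^\infty}^2\int_{J_n}|\psi_n|^4\,dx\le C'^2\int_{J_n}|\psi_n|^4\,dx.$$
It follows that $M_n(1/2)\ge\eta/C'^2$ for all $n$, hence $M(1/2)\ge\eta/C'^2$. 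By monotonicity in $r$, $\alpha=\lim_{r\to\infty}M(r)\ge\eta/C'^2>0$.

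The main point needing care is that the constants in Lemma \ref{lem:conc.-H1L4} must be uniform in $n$. This rests on the uniform $\dot H^1$ bound and the uniform $L^6$ lower bound from Lemma \ref{lem:bound-nonvanish.}, together with $I_q<0$. The remaining steps are routine.
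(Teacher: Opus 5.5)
Your proposal is correct and follows the paper's route. You combine the uniform $\dot H^1$ bound and the $L^6$ lower bound of Lemma~\ref{lem:bound-nonvanish.} with Lemma~\ref{lem:conc.-H1L4} to get local $L^6$ concentration, and then convert it into local $L^4$ mass on the same unit interval. The one variation is in that last step. You use the global bound $\|\psi_n\|_{L^\infty}\lesssim|\psi_n|_{\dot H^1}^{1/3}\|\psi_n\|_{L^4}^{2/3}$ together with H\"older, which gives $M_n(1/2)\gtrsim\eta$ linearly, whereas the paper uses the local Gagliardo--Nirenberg inequality on $J_n$. Your remarks about the factor $2$ in $\Lambda_1$ and about the finitely many indices with $\|\psi_n\|_{L^6}<\delta$ correctly tidy up two points the paper passes over.
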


\begin{proof}

Recall that $\Lambda_1=\max\{\Lambda,(4q)^{1/4}\}$. Then
Lemma \ref{lem:bound-nonvanish.} provides
\begin{equation*}
|\psi_n|_{\dot{H}^1}+\|\psi_n\|_{L^4}\leq \Lambda_1,
\end{equation*} 
while Lemma \ref{lem:conc.-H1L4} yields  $\eta>0$ and $y_n\in\mathbb{R}$ such that
\begin{equation}\label{local-L6-lower}
\int_{y_n-1/2}^{ y_n+1/2} |\psi_n|^6dx \geq \eta 
\qquad\text{for every } n.
\end{equation}
Let $J_n:=(y_n-1/2, y_n+1/2)$.
Moreover, the  
Gagliardo-Nirenberg inequality  gives
\begin{equation}\label{GN-upper}
\int_{J_n} |\psi_n|^6dx
\;\le\;
C\Big(
\Lambda_1^{2/3}\|\psi_n\|_{L^4(J_n)}^{16/3}
\;+\;
\|\psi_n\|_{L^4(J_n)}^{6}
\Big).
\end{equation}
Combining \eqref{local-L6-lower} with \eqref{GN-upper}, we arrive at
$$
\eta
\leq
C\Big(
\Lambda_1^{2/3}\|\psi_n\|_{L^4(J_n)}^{16/3}
+\|\psi_n\|_{L^4(J_n)}^{6}
\Big).
$$
Since 
$$
\int_{y_n-1/2}^{ y_n+1/2}|\psi_n|^4dx
\leq M_n(1/2),
$$
it follows that
$$
\eta
\leq 
C\Big(
\Lambda_1^{2/3} M_n(1/2)^{4/3}
+ M_n(1/2)^{3/2}
\Big).
$$
Then $\eta>0$  forces
$$
M_n(1/2)\geq C_1>0
\qquad\text{for all sufficiently large }n,
$$
where $C_1$ depends only on $B$ and $\eta$.  
Hence,
$$
M(1/2)=\limsup_{n\to\infty} M_n(1/2)>0.
$$
Because $M(r)$ is nondecreasing in $r$, we obtain
$$
\alpha=\lim_{r\to\infty} M(r)\geq M(1/2)\geq c>0.
$$
\end{proof}

\section{Nonexistence of traveling wave solutions}\label{section 7}
\begin{theorem}\label{thm: nonexist.}
Let $\omega,c\in\mathbb{R}$. Then there is no nontrivial solution to
\eqref{nonlocal DNLS} of the form
\begin{equation}\label{eq: trav. wave}
u(x,t)
=\psi(x-ct)\exp\bigg(
i\omega t-i\frac{c}{2}(x-ct)
+\frac{i(\alpha+\beta)}{4}
\int_{x_*(t)}^{x-ct}|\psi(y)|^2 dy
\bigg),
\end{equation}
where
$$
x_*(t)=
\begin{cases}
-ct, & \text{in the critical case},\\
-\infty, & \text{in the subcritical case},
\end{cases}
$$
if one of the following conditions holds:
\begin{enumerate}
    \item $B\geq0$, $A\geq0$, and $\gamma>0$;
    \item $B\leq0$, $A\leq0$, $\gamma\in\mathbb{R}$, and
    $\omega+c^2/4\geq0$.
\end{enumerate}
Here $\psi$ is a smooth function belonging to
$$
\psi\in H^1(\mathbb{R};\mathbb{R})
$$
in the subcritical case, and
$$
\psi\in \dot H^1(\mathbb{R};\mathbb{R})\cap L^4(\mathbb{R};\mathbb{R})
$$
in the critical case.
\end{theorem}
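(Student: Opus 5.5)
The plan is to reduce the statement to the stationary equation \eqref{Second order ODE} and then combine a Nehari-type identity with a Pohozaev-type identity. Set $s:=\omega+c^2/4$ and $N(\psi):=\int_{\mathbb{R}}\psi^2\,|D|(\psi^2)\,dx=\big\||D|^{1/2}(\psi^2)\big\|_{L^2}^2\ge0$.

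\textbf{Step 1 (reduction).} If $u$ of the form \eqref{eq: trav. wave} solves \eqref{nonlocal DNLS}, then reversing the computations of Section \ref{section 2} (gauge \eqref{gauge1}, ansatz \eqref{wave ansatz}, phase \eqref{phase rot.}) shows that $\psi$ solves \eqref{ODE with Im()}. Since $\psi$ is real-valued, $\operatorname{Im}(\psi'\bar\psi)=0$, which is also the content of Lemma \ref{Colin lemma}. Hence $\psi$ solves
\[
-\psi''-s\psi+A\psi^3+B\psi^5+\gamma\psi\,(\mathcal H(\psi^2))'=0 .
\]

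\textbf{Step 2 (two integral identities).} Multiplying by $\psi$ and integrating gives the Nehari identity
\[
\int_{\mathbb{R}}\psi'^2-s\int_{\mathbb{R}}\psi^2+A\int_{\mathbb{R}}\psi^4+B\int_{\mathbb{R}}\psi^6+\gamma N(\psi)=0 .
\]
Multiplying by $x\psi'$ and integrating by parts gives the Pohozaev identity
\[
\frac12\int_{\mathbb{R}}\psi'^2+\frac s2\int_{\mathbb{R}}\psi^2-\frac A4\int_{\mathbb{R}}\psi^4-\frac B6\int_{\mathbb{R}}\psi^6=0 .
\]
The key point is that the nonlocal term does not appear in the Pohozaev identity. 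Write $f=\psi^2$, so the nonlocal contribution is $\frac{\gamma}{2}\int x f'\,|D|f$. Since $|D|$ is homogeneous of degree one, the dilation $f_\lambda(x)=f(\lambda x)$ satisfies $\int f_\lambda|D|f_\lambda=\int f|D|f$ for all $\lambda>0$. Differentiating at $\lambda=1$ gives $2\int x f'\,|D|f=0$.

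\textbf{Step 3 (conclusion).}

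In case (1), add the Nehari identity to twice the Pohozaev identity; the $s$-terms cancel, giving
\[
2\int_{\mathbb{R}}\psi'^2+\frac A2\int_{\mathbb{R}}\psi^4+\frac{2B}{3}\int_{\mathbb{R}}\psi^6+\gamma N(\psi)=0 .
\]
Every term is nonnegative, so $\psi'\equiv0$. Together with $\psi\in L^2$ or $\psi\in L^4$, this forces $\psi\equiv0$.

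In case (2), the Pohozaev identity alone has only nonnegative terms because $s\ge0$, $A\le0$ and $B\le0$. Hence again $\psi'\equiv0$ and $\psi\equiv0$.

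In the critical case $s=0$, the $L^2$ terms never appear, so only $\dot H^1\cap L^4$ information (and $L^6$ via Lemma \ref{lem: massfree}) is needed.

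\textbf{Main obstacle (rigorous justification).} The hardest part is justifying the Pohozaev identity. Multiplication by $x\psi'$ requires decay, and this is where the smoothness and weighted decay hypotheses enter. I would multiply by $x\chi(x/R)\psi'$ with a cutoff $\chi\in C_c^\infty$, $\chi\equiv1$ near $0$, and let $R\to\infty$.

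For the local terms, the error terms are integrals of $\psi'^2,\psi^2,\psi^4,\psi^6$ against $(x/R)\chi'(x/R)$. These vanish as $R\to\infty$ by dominated convergence, since the integrands lie in $L^1$ (using $\psi\in L^2$ only when $s\ne0$).

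The nonlocal term is the delicate one. I would write
\[
\int x\chi_R f'|D|f=\tfrac12\int\big(x\chi_R f'|D|f+f|D|(x\chi_Rf')\big)+\tfrac12\int f\,[x\chi_R,|D|]f',
\]
and treat the symmetrized part with the scaling argument above. For the commutator I would first try a Calder\'on commutator bound, $\|[x\chi_R,|D|]g\|_{L^2}\lesssim\|(x\chi_R)'\|_{L^\infty}\|g\|_{L^2}$, in the spirit of the Coifman--Meyer estimate used in Lemma \ref{lem:split.}. If that does not close, I would fall back on the Fourier-side computation $\widehat{xf'}=-\hat f-\xi\hat f'$, which requires $f\in L^2(\langle x\rangle^2dx)$, to verify directly that $\int xf'\,|D|f=0$ for $f=\psi^2$.
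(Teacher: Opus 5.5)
Your argument matches the paper's at the level of identities. You use the same Nehari identity and the same Pohozaev identity (yours is one half of \eqref{eq:energy2} once its nonlocal term is dropped). Your case (1) relation $2\int\psi'^2+\frac A2\int\psi^4+\frac{2B}{3}\int\psi^6+\gamma N(\psi)=0$ is exactly Nehari plus \eqref{eq:energy2}, which the paper unpacks as a chain of strict inequalities, and your case (2) is the paper's argument verbatim. Obtaining $\int x f'|D|f=0$ from the dilation invariance of the $\dot H^{1/2}$ seminorm, instead of citing G\'erard--Lenzmann, is a fine formal substitute.

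The step that fails is your rigorous treatment of the nonlocal term. By self-adjointness of $|D|$, $\int f\,|D|(x\chi_R f')=\int x\chi_R f'\,|D|f$. So your ``symmetrized part'' is the original term itself, and your displayed decomposition would force $\int f\,[x\chi_R,|D|]f'=0$. That is false: this quantity tends to $-\|\,|D|^{1/2}f\|_{L^2}^2$. Nor does the dilation argument apply to the vector field $x\chi_R\partial_x$, which generates no symmetry. The correct identity, from one integration by parts and self-adjointness, is
\[
2\int x\chi_R f'\,|D|f\,dx=\int f\,[|D|,x\chi_R]f'\,dx-\int (x\chi_R)'\,f\,|D|f\,dx .
\]
Calder\'on's commutator estimate, with $\|(x\chi_R)'\|_{L^\infty}\lesssim 1$, bounds $[|D|,x\chi_R]f'$ in $L^2$ uniformly in $R$. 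Testing against $\phi\in C_c^\infty(\mathbb{R})$ (for which $x|D|\phi\in L^2$) identifies its weak limit as $[|D|,x]f'=\mathcal{H}f'=|D|f$. Since $(x\chi_R)'\to1$ boundedly, both terms on the right tend to $\int f|D|f$, so the left side tends to $0$.

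This needs only $f=\psi^2\in H^1$, which holds in both regimes. In the critical case this is because $\psi\in L^4\cap L^\infty$ by the Gagliardo--Nirenberg bound in the proof of Lemma \ref{lem: massfree}. So the weighted hypothesis $f\in L^2(\langle x\rangle^2dx)$ of your fallback, which the theorem does not assume, is not needed.
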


\begin{proof}
 We start with the first statement.
Suppose, by contradiction, that $u$ given by \eqref{eq: trav. wave}
is a nontrivial solution of \eqref{nonlocal DNLS}. Then its real-valued
profile $\psi$ satisfies 
\begin{equation}
-\psi'' - \left( \omega + \frac{c^2}{4} \right)\psi +  A |\psi|^2\psi + B |\psi|^4\psi + \gamma (\mathcal{H}(|\psi|^2))' \psi = 0.
\label{eq:main 1}
\end{equation}
Using the weak formulation of \eqref{eq:main 1}  and treating $\psi$ as a test function, we have
\begin{equation}\label{eq:energy1}
\begin{aligned}
\int_{\mathbb{R}} |\psi'|^2dx &= \left( \omega + \frac{c^2}{4} \right) \int_{\mathbb{R}} |\psi|^2dx -  A \int_{\mathbb{R}} |\psi|^4dx- B \int_{\mathbb{R}} |\psi|^6dx \\
& - \gamma \int_{\mathbb{R}} |\psi|^2(\mathcal{H}(|\psi|^2))'  dx.
\end{aligned}
\end{equation}
Testing \eqref{eq:main 1} against a suitable cutoff approximation of $x\psi'$ and passing to the limit, we obtain the Pohozaev identity
\begin{equation}\label{eq:energy2}
\begin{aligned}
\int_{\mathbb{R}} (\psi')^2 dx+\left(\omega+\frac{c^2}{4}\right) \int_{\mathbb{R}} |\psi|^2dx&-\frac{ A}{2} \int_{\mathbb{R}} |\psi|^4dx-\frac{B}{3} \int_{\mathbb{R}} |\psi|^6dx\\
&+\gamma \int_{\mathbb{R}} x\left(|\psi|^2\right)^{\prime} (\mathcal{H}(|\psi|^2))'dx=0.
\end{aligned}
\end{equation}
As shown in \cite[Appendix C]{gerard2024},
\begin{equation}\label{eq:integral-is-0}
\int_{\mathbb R}
x(|\psi|^2)'\mathcal H\big((|\psi|^2)'\big)\,dx=0.
\end{equation}
Thus, \eqref{eq:energy2} leads to
\begin{equation}\label{eq:pos.}
\left( \omega + \frac{c^2}{4} \right) \int_{\mathbb{R}} |\psi|^2  dx <  
 \frac{ A}{2} \int_{\mathbb{R}} |\psi|^4  dx 
+ \frac{B}{3} \int_{\mathbb{R}} |\psi|^6  dx.
\end{equation}
Combining \eqref{eq:energy1}--\eqref{eq:pos.}, we have
\begin{equation*}
\begin{aligned}
\left( \omega + \frac{c^2}{4} \right) \int_{\mathbb{R}} |\psi|^2  dx 
&- \frac{\gamma}{2} \int_{\mathbb{R}} |\psi|^2   \left(\mathcal{H}(|\psi|^2)\right)^{\prime}  dx  \\
&= \frac{ A}{2} \int_{\mathbb{R}} |\psi|^4  dx 
+ \frac{B}{3} \int_{\mathbb{R}} |\psi|^6  dx 
+ \frac{ A}{4} \int_{\mathbb{R}} |\psi|^4  dx 
+ \frac{B}{3} \int_{\mathbb{R}} |\psi|^6  dx \\
& > \left( \omega + \frac{c^2}{4} \right) \int_{\mathbb{R}} |\psi|^2  dx 
+ \frac{ A}{4} \int_{\mathbb{R}} |\psi|^4  dx 
+ \frac{B}{3} \int_{\mathbb{R}} |\psi|^6  dx.\\
\end{aligned}
\label{eq:final_inequality}
\end{equation*}
Therefore,
$$
0 > -\frac{\gamma}{2} \int_{\mathbb{R}} |\psi|^2   \left(\mathcal{H}(|\psi|^2)\right)^{\prime}  dx 
> \frac{ A}{4} \int_{\mathbb{R}} |\psi|^4  dx 
+ \frac{B}{3} \int_{\mathbb{R}} |\psi|^6  dx 
\geq  0,
$$
which is a contradiction.

Now we prove the second statement. Suppose, by contradiction, $u$ given by \eqref{eq: trav. wave} is a solution of \eqref{nonlocal DNLS}. Then $\psi$ solves \eqref{eq:main 1}. Again, testing \eqref{eq:main 1} against a suitable cutoff approximation of $x\psi'$ and passing to the limit, we obtain \eqref{eq:energy2}. Thus, we arrive at a contradiction
\begin{equation*}
0 < \int_{\mathbb{R}} |\psi'|^2  dx 
= -\left( \omega + \frac{c^2}{4} \right) \int_{\mathbb{R}} |\psi|^2  dx 
+ \frac{ A}{2} \int_{\mathbb{R}} |\psi|^4  dx 
+ \frac{B}{3} \int_{\mathbb{R}} |\psi|^6  dx \leq 0.
\end{equation*}
\end{proof}

\begin{theorem}\label{thm:critical-nonexistence-negative-gamma}
Let $\gamma<0$ and $B>\gamma^2/4$. Suppose that
$$
\psi\in \dot H^1(\mathbb{R};\mathbb{R})\cap L^6(\mathbb{R};\mathbb{R})
$$
is a sufficiently smooth solution of
\begin{equation}\label{eq:GL-stationary}
-\psi''
+
B|\psi|^4\psi
+
\gamma\psi |D|(|\psi|^2)
=
0.
\end{equation}
Then
$$
\psi\equiv0.
$$
\end{theorem}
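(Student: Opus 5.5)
The plan is to combine the two standard integral identities for \eqref{eq:GL-stationary} (the Nehari-type identity and the Pohozaev identity) with a sharp bound on the nonlocal term. That bound comes from the algebraic structure of the Hilbert transform, and it should saturate exactly at $B=\gamma^2/4$, the Calogero--Moser case where the explicit solitons exist. Throughout, set $\rho:=\psi^2\ge0$, so that $|D|\rho=(\mathcal H\rho)'$.

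\textbf{Step 1: two identities.} First I test \eqref{eq:GL-stationary} against $\psi$, as in the proof of Theorem \ref{thm: nonexist.}. This gives
\begin{equation*}
\int_{\mathbb R}|\psi'|^2\,dx+B\int_{\mathbb R}\rho^3\,dx+\gamma\int_{\mathbb R}\rho\,|D|\rho\,dx=0 .
\end{equation*}
Next I test against a cutoff approximation of $x\psi'$. The Laplacian term contributes $\tfrac12\int|\psi'|^2$. The sextic term contributes $\tfrac B6\int x(\rho^3)'=-\tfrac B6\int\rho^3$. The nonlocal term contributes $\tfrac\gamma2\int x\rho'\,\mathcal H(\rho')\,dx$, which vanishes by \eqref{eq:integral-is-0}. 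Hence
\begin{equation*}
\int_{\mathbb R}|\psi'|^2\,dx=\frac B3\int_{\mathbb R}\rho^3\,dx .
\end{equation*}
Substituting this into the first identity yields
\begin{equation*}
\frac{4B}{3}\int_{\mathbb R}\rho^3\,dx=|\gamma|\int_{\mathbb R}\rho\,|D|\rho\,dx .
\end{equation*}

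\textbf{Step 2: sharp bound on the nonlocal term.} Integrating by parts,
\begin{equation*}
\int_{\mathbb R}\rho\,|D|\rho\,dx=\int_{\mathbb R}\rho\,(\mathcal H\rho)'\,dx=-\int_{\mathbb R}\rho'\,\mathcal H\rho\,dx=-2\int_{\mathbb R}\psi'\,\psi\,\mathcal H\rho\,dx .
\end{equation*}
By Cauchy--Schwarz this is at most $2\|\psi'\|_{L^2}\big(\int\rho(\mathcal H\rho)^2\,dx\big)^{1/2}$.

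The key ingredient is the cubic identity
\begin{equation*}
\int_{\mathbb R}\rho\,(\mathcal H\rho)^2\,dx=\frac13\int_{\mathbb R}\rho^3\,dx .
\end{equation*}
It follows because $F:=\rho+i\mathcal H\rho$ is the boundary value of a function holomorphic in the upper half-plane and decaying there, so $\int_{\mathbb R}F^3\,dx=0$. Taking the real part gives $\int\rho^3-3\int\rho(\mathcal H\rho)^2=0$. Combining this with $\|\psi'\|_{L^2}^2=\frac B3\int\rho^3$ from Step 1, I get
\begin{equation*}
\int_{\mathbb R}\rho\,|D|\rho\,dx\le 2\Big(\frac B3\Big)^{1/2}\Big(\frac13\Big)^{1/2}\int_{\mathbb R}\rho^3\,dx=\frac{2\sqrt B}{3}\int_{\mathbb R}\rho^3\,dx .
\end{equation*}

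\textbf{Step 3: conclusion.} Inserting this bound into the last identity of Step 1 gives $\frac{4B}{3}\int\rho^3\le\frac{2|\gamma|\sqrt B}{3}\int\rho^3$. If $\psi\not\equiv0$, then $\int\rho^3>0$, and we obtain $2\sqrt B\le|\gamma|$, i.e. $B\le\gamma^2/4$. This contradicts the hypothesis $B>\gamma^2/4$, so $\psi\equiv0$.

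\textbf{Main obstacle.} The algebra above is short. The hardest part will be justifying the manipulations under the weak hypothesis $\psi\in\dot H^1\cap L^6$, where $\psi$ need not be in $L^2$, so that $\rho\in L^3\cap L^\infty$ but not necessarily $\rho\in L^1$. Three points need care:
\begin{enumerate}
\item the Pohozaev limit, where the commutator of the cutoff with $|D|$ must be shown to vanish;
\item the validity of \eqref{eq:integral-is-0} for such $\rho$;
\item the cubic Hilbert-transform identity, which needs $\rho\in L^3$ so that $F\in H^3$ of the half-plane.
\end{enumerate}
I would first prove both Hilbert identities for $\rho\in\mathcal S$. I would then extend them by density in $L^3\cap\dot H^{1/2}$, using $L^p$-boundedness of $\mathcal H$ for $p=2,3$. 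Finiteness of $\int\rho|D|\rho$ is guaranteed by the smoothness assumption together with the first identity.
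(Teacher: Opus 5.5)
Your proposal is correct and follows the paper's proof essentially step for step: the Nehari identity, the Pohozaev identity $\int_{\mathbb R}|\psi'|^2\,dx=\frac B3\int_{\mathbb R}\psi^6\,dx$ obtained via \eqref{eq:integral-is-0}, and then the sharp bound of Proposition \ref{prop:sharp-nonlocal-inequality}. That bound uses integration by parts, Cauchy--Schwarz and the cubic identity $\int_{\mathbb R}\rho(\mathcal H\rho)^2\,dx=\frac13\int_{\mathbb R}\rho^3\,dx$ with $\rho=\psi^2$, and it forces $B\le\gamma^2/4$. The differences are cosmetic: you obtain the cubic identity from $\int_{\mathbb R}(\rho+i\mathcal H\rho)^3\,dx=0$ rather than from the Tricomi identity plus skew-adjointness of $\mathcal H$ (the two are equivalent), and you divide by $\int_{\mathbb R}\rho^3\,dx$ instead of $\bigl(\int_{\mathbb R}|\psi'|^2\,dx\bigr)^2$, which makes the nontriviality step slightly more direct.
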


\begin{proof}
Multiplying the equation by $\psi$ and integrating over $\mathbb{R}$, we obtain
$$
\int_{\mathbb{R}}|\psi'|^2 dx
+
B\int_{\mathbb{R}}|\psi|^6 dx
+
\gamma
\int_{\mathbb{R}}
|\psi|^2|D|(|\psi|^2) dx
=
0.
$$
Testing the equation against a suitable cutoff approximation of $x\psi'$ and passing to the limit, we obtain the Pohozaev identity
\begin{equation*}
B\int_{\mathbb{R}}|\psi|^6 dx
=
3\int_{\mathbb{R}}|\psi'|^2 dx,
\end{equation*}
where we have used \eqref{eq:integral-is-0}.
Substituting this identity into the first one gives
\begin{equation}\label{eq:after-Pohozaev}
\int_{\mathbb{R}}
|\psi|^2|D|(|\psi|^2) dx
=
\frac{4}{-\gamma}
\int_{\mathbb{R}}|\psi'|^2 dx.
\end{equation}
Using Proposition \ref{prop:sharp-nonlocal-inequality} and
\eqref{eq:after-Pohozaev},
we obtain
\begin{equation}\label{eq:1}
\frac{16}{\gamma^2}
\left(
\int_{\mathbb{R}}|\psi'|^2 dx
\right)^2
\le
\frac{4}{B}
\left(
\int_{\mathbb{R}}|\psi'|^2 dx
\right)^2.
\end{equation}
Since $\psi$ is nontrivial, it follows that
$$
\int_{\mathbb{R}}|\psi'|^2 dx>0.
$$
Indeed, $\psi'=0$ a.e. would imply that $\psi$ is constant, and
$\psi\in L^6(\mathbb{R};\mathbb{R})$ would then force $\psi\equiv0$. Therefore, 
$$
B>\frac{\gamma^2}{4}
$$
together with \eqref{eq:1}
force
$$
\psi\equiv0.
$$
\end{proof}

\section{Appendix}

\begin{proposition}\label{prop:sharp-nonlocal-inequality}
For every
$$
\psi\in \dot{H}^1(\mathbb{R} ; \mathbb{C}) \cap L^6(\mathbb{R} ; \mathbb{C}) ,
$$
we have
$$
\big\||D|^{1/2}(|\psi|^2)\big\|_{L^2}^2
\le
\frac{2}{\sqrt3}\,
\|\psi'\|_{L^2}\,
\|\psi\|_{L^6}^3.
$$
Moreover, equality is attained for 
$$
\psi(x)
=
\frac{\sqrt2}{\sqrt{1+x^2}}.
$$
In particular, the constant $2/\sqrt3$ is sharp.
\end{proposition}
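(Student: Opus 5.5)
The plan is to reduce to a real nonnegative profile, write the nonlocal quantity as a pairing between $\psi'$ and $\psi\,\mathcal H(|\psi|^2)$, apply Cauchy--Schwarz, and then evaluate $\|\psi\,\mathcal H(|\psi|^2)\|_{L^2}$ \emph{exactly} using Cotlar's identity for the Hilbert transform.

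\emph{Reduction.} Set $\rho:=|\psi|^2$. Replacing $\psi$ by $|\psi|$ leaves $\rho$ and $\|\psi\|_{L^6}$ unchanged and does not increase $\|\psi'\|_{L^2}$, since $\bigl||\psi|'\bigr|\le|\psi'|$ a.e. It therefore suffices to treat real $\psi\ge0$. By density I first work with $\psi\in C_c^\infty(\mathbb{R};\mathbb{R})$, so that every manipulation below is legitimate. At the end I pass to general $\psi\in\dot H^1\cap L^6$ by approximation. This works because the right-hand side is continuous, and the left-hand side is lower semicontinuous by Fatou on the Fourier side along a sequence with $\rho_n\to\rho$ in $L^3_{\mathrm{loc}}$.

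\emph{Step 1: Cauchy--Schwarz.} Using $|D|=\mathcal H\partial_x$ and the antisymmetry of $\mathcal H$,
$$
\bigl\||D|^{1/2}\rho\bigr\|_{L^2}^2=\int_{\mathbb{R}}\rho\,\mathcal H(\rho')\,dx=-\int_{\mathbb{R}}\rho'\,\mathcal H\rho\,dx=-2\int_{\mathbb{R}}\psi'\,\psi\,\mathcal H\rho\,dx\le 2\|\psi'\|_{L^2}\Bigl(\int_{\mathbb{R}}\rho\,(\mathcal H\rho)^2dx\Bigr)^{1/2}.
$$

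\emph{Step 2: exact evaluation via Cotlar.} Cotlar's identity for real $f$ states $(\mathcal Hf)^2-f^2=2\mathcal H(f\,\mathcal Hf)$. Multiply it by $f=\rho$, integrate, and use antisymmetry of $\mathcal H$ again:
$$
\int\rho(\mathcal H\rho)^2=\int\rho^3+2\int\rho\,\mathcal H(\rho\,\mathcal H\rho)=\int\rho^3-2\int\mathcal H\rho\,\rho\,\mathcal H\rho .
$$
Hence $3\int\rho(\mathcal H\rho)^2=\int\rho^3=\|\psi\|_{L^6}^6$. Inserting this into Step 1 gives
$$
\bigl\||D|^{1/2}\rho\bigr\|_{L^2}^2\le\frac{2}{\sqrt3}\|\psi'\|_{L^2}\|\psi\|_{L^6}^3 .
$$
This is the whole inequality. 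Because Step 2 is an identity and not an estimate, the only loss is the Cauchy--Schwarz step.

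\emph{Step 3: sharpness.} Equality holds if and only if $\psi'=-\lambda\,\psi\,\mathcal H\rho$ for some $\lambda\ge0$. For $\psi=\sqrt2(1+x^2)^{-1/2}$ we have $\rho=2/(1+x^2)$ and $\mathcal H\rho=2x/(1+x^2)$, using $\mathcal H\bigl(\tfrac1{1+x^2}\bigr)=\tfrac{x}{1+x^2}$ in the paper's convention. A direct computation gives $\psi'=-\sqrt2\,x(1+x^2)^{-3/2}=-\tfrac12\,\psi\,\mathcal H\rho$, so the alignment condition holds with $\lambda=\tfrac12$. I will also confirm equality by computing both sides explicitly: $\|\psi'\|_{L^2}^2$, $\|\psi\|_{L^6}^6$ and $\int\rho\,|D|\rho$ are all elementary integrals, which guards against sign-convention errors in $\mathcal H$. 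This $\psi$ lies in $\dot H^1\cap L^6$ (though not in $L^2$), so it is an admissible extremizer.

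\emph{Main obstacle.} The delicate step is the justification of Cotlar's identity and the integrations by parts for general $\psi$, because $\rho$ is only known to lie in $L^3$ (and $L^\infty$), not necessarily in $L^2$. I would prove everything for $C_c^\infty$ functions, where Cotlar is classical (apply it to the boundary values of the analytic function $(f+i\mathcal Hf)^2$). I would then approximate by truncations $\psi_R=\chi(\cdot/R)\psi$ mollified, using boundedness of $\mathcal H$ on $L^3$ and $L^{3/2}$ to pass to the limit in $\int\rho(\mathcal H\rho)^2$ and $\int\rho^3$, and the lower semicontinuity noted above for the left-hand side.
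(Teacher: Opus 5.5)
Your proposal is correct and follows the paper's proof essentially step for step: the same integration by parts and Cauchy--Schwarz bound, the same Tricomi/Cotlar identity giving $\int f(\mathcal{H}f)^2\,dx=\frac13\int f^3\,dx$, and the same density argument. Your reduction to $\psi\ge0$ is harmless but unnecessary, since the paper simply works with $\operatorname{Re}\int\psi'\overline{\psi}\,\mathcal{H}(|\psi|^2)\,dx$.

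One point needs tightening. Convergence of $\rho_n$ in $L^3$ (let alone $L^3_{\mathrm{loc}}$) does not give a.e. convergence of $\hat\rho_n$, so Fatou does not apply. Instead, as the paper does, take a weak $L^2$ limit of $|D|^{1/2}\rho_n$. Identify that limit by pairing against $|D|^{1/2}\varphi\in L^{3/2}$, using the global $L^3$ convergence your truncate-and-mollify sequence provides.

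Your explicit alignment check $\psi'=-\tfrac12\psi\,\mathcal{H}\rho$ for the extremizer is a welcome addition; the paper only asserts equality there.
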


\begin{proof}
 We first assume that $\psi\in C_c^\infty(\mathbb{R}; \mathbb{C})$ and set
$$
f:=|\psi|^2.
$$
Since $|D|f=(\mathcal{H}f)'$, integration by parts gives
$$
\begin{aligned}
\big\||D|^{1/2}f\big\|_{L^2}^2
&=
-\int_{\mathbb{R}}f'\mathcal{H}f dx\\
&=
-2\operatorname{Re}
\int_{\mathbb{R}}
\psi'\overline{\psi}\,
\mathcal{H}(|\psi|^2) dx.
\end{aligned}
$$
Hence, by the Cauchy--Schwarz inequality,
\begin{equation}\label{eq:Cauchy-Schwarz}
\big\||D|^{1/2}(|\psi|^2)\big\|_{L^2}^2
\le
2\|\psi'\|_{L^2}
\left(
\int_{\mathbb{R}}
|\psi|^2
\big(\mathcal{H}(|\psi|^2)\big)^2 dx
\right)^{1/2}.
\end{equation}
We now use the Tricomi identity and the skew-adjointness of the
Hilbert transform; see, for example,
\cite[Sections 4.11 and 4.16]{King2009}. Taking $g=f$ in the
Tricomi identity gives
$$
2\mathcal{H}(f\mathcal{H}f)
=
(\mathcal{H}f)^2-f^2.
$$
Multiplying by $f$, integrating over $\mathbb{R}$, and using skew-adjointness, we obtain
$$
\begin{aligned}
\int_{\mathbb{R}}f(\mathcal{H}f)^2 dx
-
\int_{\mathbb{R}}f^3 dx
&=
2\int_{\mathbb{R}}f\,\mathcal{H}(f\mathcal{H}f) dx\\
&=
-2\int_{\mathbb{R}}f(\mathcal{H}f)^2 dx.
\end{aligned}
$$
Therefore,
$$
\int_{\mathbb{R}}f(\mathcal{H}f)^2 dx
=
\frac{1}{3}\int_{\mathbb{R}}f^3 dx,
$$
cf \cite[Appendix C]{gerard2024}. Combining the last identity with \eqref{eq:Cauchy-Schwarz}, we arrive at
$$
\big\||D|^{1/2}(|\psi|^2)\big\|_{L^2}^2
\le
\frac{2}{\sqrt3}
\|\psi'\|_{L^2}
\|\psi\|_{L^6}^3.
$$

By density argument,
there exists a sequence $\psi_n\in C_c^\infty(\mathbb{R}; \mathbb{C})$ such that
$$
\psi_n'\to\psi'
\quad\text{in }L^2(\mathbb{R}; \mathbb{C}),
\qquad
\psi_n\to\psi
\quad\text{in }L^6(\mathbb{R}; \mathbb{C}).
$$
In particular,
$$
|\psi_n|^2\to|\psi|^2
\quad\text{in }L^3(\mathbb{R}; \mathbb{C}),
$$
because
$$
\big\||\psi_n|^2-|\psi|^2\big\|_{L^3}
\le
\|\psi_n-\psi\|_{L^6}
\big(
\|\psi_n\|_{L^6}+\|\psi\|_{L^6}
\big).
$$
Moreover,
$$
\big(|D|^{1/2}(|\psi_n|^2)\big)
$$
is bounded in $L^2(\mathbb{R}; \mathbb{C})$. Hence, up to a subsequence,
$$
|D|^{1/2}(|\psi_n|^2)
\rightharpoonup g
\quad\text{weakly in }L^2(\mathbb{R}; \mathbb{C}).
$$
Now we show that 
$$
g=|D|^{1/2}(|\psi|^2)
$$
in the sense of distributions. For every $\varphi\in\mathcal{S}(\mathbb{R}; \mathbb{C})$,
$$
\begin{aligned}
\langle g,\varphi\rangle
&=
\lim_{n\to\infty}
\big\langle
|D|^{1/2}(|\psi_n|^2),\varphi
\big\rangle\\
&=
\lim_{n\to\infty}
\big\langle
|\psi_n|^2,|D|^{1/2}\varphi
\big\rangle\\
&=
\big\langle
|\psi|^2,|D|^{1/2}\varphi
\big\rangle.
\end{aligned}
$$
The last limit follows from
$|\psi_n|^2\to|\psi|^2$ in $L^3(\mathbb{R};\mathbb{C})$ and
$|D|^{1/2}\varphi\in L^{3/2}(\mathbb{R};\mathbb{C})$. Indeed,
since $|D|^{1/2}=(-\Delta)^{1/4}$,
\cite[Appendix~B]{Abatangelo2019} yields
$$
|D|^{1/2}\varphi(x)
=
O(|x|^{-3/2})
\qquad\text{as }|x|\to\infty,
$$
while $|D|^{1/2}\varphi$ is locally bounded.
Therefore,
$$
g=|D|^{1/2}(|\psi|^2)
$$
in $\mathcal{S}'(\mathbb{R}; \mathbb{C})$. Then, by weak lower semicontinuity,
$$
\begin{aligned}
\big\||D|^{1/2}(|\psi|^2)\big\|_{L^2}^2
&\le
\liminf_{n\to\infty}
\big\||D|^{1/2}(|\psi_n|^2)\big\|_{L^2}^2\\
&\le
\frac{2}{\sqrt3}
\|\psi'\|_{L^2}
\|\psi\|_{L^6}^3.
\end{aligned}
$$

Finally, the constant $2/\sqrt3$ is sharp, since
$$
\psi(x)
=
\frac{\sqrt2}{\sqrt{1+x^2}}
$$
belongs to $\dot H^1(\mathbb{R}; \mathbb{C})\cap L^6(\mathbb{R}; \mathbb{C})$ and attains equality.
\end{proof}

\section{Acknowledgments}
This research was funded by the Science Committee of the Ministry of Science and Higher Education of Kazakhstan (Grant No. AP26194665) and the Nazarbayev University Faculty Development Competitive Research Grants Program 040225FD4702. 

\section{Conflict of interest}
The authors declare that they have no conflict of interest.

\section{Data availability}
The manuscript has no associated data.

\addcontentsline{toc}{chapter}{Bibliography}
\bibliography{refs}      

\bibliographystyle{abbrv}  

\end{document}